\documentclass[12pt,a4paper,oldfontcommands]{amsart}
\usepackage[utf8]{inputenc}
\usepackage[T1]{fontenc}
\usepackage[foot]{amsaddr}
\usepackage{microtype}
\usepackage[dvips]{graphicx}
\usepackage{xcolor}
\usepackage{times}
\usepackage{amsmath,amsfonts,amssymb,amsthm}
\usepackage{bm}
\usepackage{xr}
\usepackage{tikz}
\usepackage{pgffor}
\usepackage{subfigure}
\usepackage{mathtools}
\usepackage[
breaklinks=true,colorlinks=true,
linkcolor=black,urlcolor=black,citecolor=black,
bookmarks=true,bookmarksopenlevel=2]{hyperref}

\usepackage{geometry}
\geometry{total={210mm,297mm},
left=20mm,right=20mm,
bindingoffset=10mm, top=25mm,bottom=25mm}

\newcommand{\tr}[1]{\textcolor{black}{#1}}
\newcommand{\I}{0,T}
\newcommand{\ito}{I\times\Omega}
\newcommand{\imto}{I_m\times\Omega}
\newcommand{\ovl}{\overline{l}}

\newcommand{\ovlth}{\overline{l}_{\tau h}}

\newcommand{\ovls}{\overline{l}_\sigma}

\newcommand{\ovv}{\overline{\varphi}}

\newcommand{\ovd}{\overline{d}}

\newcommand{\ovz}{\overline{z}}

\newcommand{\ovp}{\overline{p}}

\newcommand{\Hminuseins}{H^{-1}(\Omega)}
\newcommand{\Heins}{H^1(\Omega)}
\newcommand{\Hzwei}{H^2(\Omega)}
\newcommand{\Hs}{H^s(\Omega)}

\newcommand{\LL}[2]{L^{#1}(\I;L^{#2}(\Omega))}
\newcommand{\Heinsnull}{H^1_0(\Omega)}
\newcommand{\Lzwo}{L^2(\Omega)}

\newcommand{\Lzweizwei}{L^2(0,T;\Lzwo)}
\newcommand{\Leinszwei}{L^1(0,T;\Lzwo)}
\newcommand{\LzweiHeins}{L^2(0,T;H^1(\Omega))}
\newcommand{\LzweiHminuseins}{L^2(0,T;H^{-1}(\Omega))}

\newcommand{\Heinszwei}{H^1(\I;\Lzwo)}
\newcommand{\Heinsnullzwei}{H^1_{\{0\}}(\I;\Lzwo)}

\newcommand{\Linfzwo}{L^\infty(\I;\Lzwo)}

\newcommand{\Xtau}{X^0_\tau}
\newcommand{\Vtau}{V^0_\tau}
\newcommand{\B}{\mathfrak{B}}
\newcommand{\bb}{\mathfrak{b}}
\newcommand{\vt}{\varphi_\tau}
\newcommand{\vtm}{\varphi_{\tau,m}}

\newcommand{\dt}{d_\tau}
\newcommand{\dtm}{d_{\tau,m}}
\newcommand{\evt}{e_\tau^\varphi}
\newcommand{\edt}{e_\tau^d}
\newcommand{\etavt}{\eta_\tau^\varphi}
\newcommand{\xivt}{\xi_\tau^\varphi}
\newcommand{\etadt}{\eta_\tau^d}

\newcommand{\xidt}{\xi_\tau^d}
\newcommand{\zt}{z_\tau}
\newcommand{\pt}{p_\tau}
\newcommand{\Vh}{V_h^1}
\newcommand{\Xh}{X^1_h}
\newcommand{\Vth}{V^{0,1}_{\tau h}}
\newcommand{\V}[2]{V_{\tau h}^{#1,#2}}
\newcommand{\Xth}{X^{0,1}_{\tau h}}
\newcommand{\vth}{\varphi_{\tau h}}
\newcommand{\dth}{d_{\tau h}}

\newcommand{\evh}{e_h^\varphi}
\newcommand{\edh}{e_h^d}

\newcommand{\etadh}{\eta_h^d}
\newcommand{\xidh}{\xi_h^d}

\newcommand{\etaph}{\eta^p_h}

\newcommand{\dthtilde}{\tilde{d}_{\tau h}}
\newcommand{\dthtildem}{\tilde{d}_{\tau h,m}}
\newcommand{\xidhm}{\xi^d_{h,m}}
\newcommand{\etadhm}{\eta^d_{h,m}}
\newcommand{\dthm}{d_{\tau h,m}}

\newcommand{\vthm}{\varphi_{\tau h,m}}


\newcommand{\Sth}{S_{\tau h}}
\newcommand{\jth}{j_{\tau h}}
\DeclareMathOperator*{\maxe}{max_{\varepsilon}}



\newtheorem{theorem}{Theorem}[section]
\newtheorem{definition}[theorem]{Definition}
\newtheorem{proposition}[theorem]{Proposition}
\newtheorem{corollary}[theorem]{Corollary}
\newtheorem{lemma}[theorem]{Lemma}

\newtheorem{remark}[theorem]{Remark}


\newtheorem{assumption}[theorem]{Assumption}



\externaldocument[stateeq-]{NumAnaDamageNonlinearreduced}


\begin{document}
\title[Optimal control of a non-smooth PDE-ODE system]{A priori error estimates for the space-time finite element approximation of a non-smooth optimal control problem governed by a coupled semilinear PDE-ODE system}
\author{Marita Holtmannspötter$^\dagger$}
\author{Arnd Rösch$^\dagger$}
\address[$^\dagger$]{Faculty of Mathematics, University of Duisburg-Essen, 45127 Essen, Germany}
\email{marita.holtmannspoetter@uni-due.de, arnd.roesch@uni-due.de}
\keywords{error estimates, finite elements, semilinear coupled PDE-ODE system, non-smooth optimization}
\subjclass{49M25, 65J15, 65M12, 65M15, 65M60}
\maketitle
\begin{abstract}
In this paper we investigate a priori error estimates for the space-time Galerkin finite element discretization of a simplified semilinear gradient enhanced damage model. The model equations are of a special structure as the state equation consists of an elliptic PDE which has to be fulfilled at almost all times coupled with a non-smooth, semilinear ODE that has to hold true in almost all points in space. The system is discretized by a constant discontinuous Galerkin method in time and usual conforming linear finite elements in space. For the uncontrolled equation, we prove linear convergence in time and an order of $\mathcal{O}(h^{\frac{3}{2}-\epsilon})$ for the discretization error in space. Our main result regarding the optimal control problem is the uniform convergence of $dG(0)cG(1)$-discrete controls to $\ovl\in\Heinsnullzwei$. Error estimates for the controls are established via a quadratic growth condition. Numerical experiments are added to illustrate the proven rates of convergence. 
\end{abstract}
\section{Introduction} 

In this paper, we derive a priori error estimates for the space-time finite element discretization of a simplified semilinear gradient enhanced damage model and the associated optimal control problem. To be more specific, we investigate the finite element approximation of the optimal control problem
\[J(\varphi,d,l)=\frac{1}{2}\Vert \varphi-\varphi_d\Vert^2_{\Lzweizwei}+\frac{1}{2}\Vert d-d_d\Vert^2_{\Lzweizwei}+\frac{\alpha_l}{2}\Vert l\Vert^2_{\Heinsnullzwei}\]
subject to the state equation
\begin{align}
\label{mod:nonlinmodbegin} -\alpha \Delta \varphi (t)+\beta \varphi(t)&=\beta d(t)+l(t)  \quad \text{ in } \Omega\\ 
\varphi(t)&=0 \quad \text{ on } \partial\Omega \\
 \partial_t d(t)&=\frac{1}{\delta}\max\{-\beta(d(t)-\varphi (t))-r,0\}  \quad \text{ a. e. in } \Omega\label{mod:nonlinmodode}\\
d(0)&=d_0 \label{mod:nonlinmodend}
\end{align}
for almost all $t\in I=[\I]$ where $l$ acts as a control and $\varphi$ and $d$ are the resulting states. A precise formulation is given in the later sections. For the discretization of the state equation we will use a discontinuous piecewise constant finite element method in time and usual $H^1$-conforming linear finite elements in space. The state equation is motivated by a specific gradient enhanced damage model, first developed in \cite{DH08,DH11} and thoroughly analyzed from a mathematical point of view in \cite{MS16i,MS16ii}. First of all, this model describes the displacement of a body $\Omega$ influenced by a given force $l$. In addition, the model features two damage variables $\varphi$ and $d$ where the first one is more regular in space whereas the second one carries the evolution of damage in time. Both are coupled by a penalty term in the free energy functional with $\beta$ being the penalty parameter. The parameter $\alpha$ originates from the gradient enhancement while $\delta$ is a viscosity parameter (see \cite{MS16i} for details). The resulting system consists of two nonlinear PDEs which have to hold true in almost all time points and an ODE that should be fulfilled in almost every point in space. All three equations are fully coupled with each other. For a first analysis of the discretization of such a model we simplified the underlying PDE system, skipping the displacement variable $u$ as well as the nonlinear material function. A further simplified linear version of this model, that lacks the $\max$-operator in the ODE, has been studied in a companion paper, see \cite{HRV18}.  The original damage model will be subject of later work.\\

As its linear counterpart, the semilinear model problem still has the special structure of the original damage model which differs from other coupled PDE-ODE-systems examined in literature. In contrast to the linear model problem studied in \cite{HRV18}, the semilinear optimal control problem is non-smooth since the $\max$-operator is not differentiable. \\

The optimal control problem is formulated with a tracking-type functional. We employ a regularization of the control in $\Heinsnullzwei=\{l\in\Heinszwei : l(0)=0\}$. An alternative and perhaps more naive choice would be a regularization in $\Lzweizwei$. But $\Lzweizwei$ as control space has two major drawbacks. First of all, the standard proof for the existence of an optimal control fails for the control-to-state operator $S\colon \Lzweizwei\to \LzweiHeins\times\Heinszwei, S(l)=(\varphi,d),$ since the Nemytskii-operator $\max \colon \Lzwo \to \Lzwo$ and thus also $S$ are not weakly continuous. And second, even if one is able to proof the existence of an optimal control $\ovl\in\Lzweizwei$, one lacks temporal regularity of the state $\varphi$. The state $\varphi$ is only as regular in time as the right-hand side of the PDE \eqref{mod:nonlinmodbegin}. For $l\in\Lzweizwei$, we only have $\varphi\in\LzweiHeins$ which is not sufficient for the derivation of temporal error estimates. For the linear model problem, we encountered the very same problem, see \cite{HRV18}. In the linear case, one is able to establish the required regularity for the optimal state, that is $\ovv\in\Heinszwei$ by employing a bootstrapping argument to the optimality system. For the semilinear model problem \eqref{mod:nonlinmodbegin}-\eqref{mod:nonlinmodend}, a similar argumentation is not promising since the optimality system for a non-smooth problem (cf. Theorem \ref{thm:optimalitysystemforP}) has a different structure and includes an additional multiplier which itself has only low temporal regularity. Both issues are solved if we regularize in $\Heinsnullzwei$. \\

The aim of this paper is twofold. First, we establish a priori discretization error estimates for the finite element discretization of \eqref{mod:nonlinmodbegin}-\eqref{mod:nonlinmodend}. The main ingredient will be the higher spatial regularity of the state $d$.
Based on these error estimates for the states, we derive error estimates for the discretization of controls via quadratic growth conditions. The main result will be the uniform convergence of the discrete controls. We focus on the same discretization technique for the control as for the states, meaning piecewise constant, discontinuous finite elements in time and $H^1$-conforming finite elements in space. Since $l\in\Heinsnullzwei$, the convergence of discrete states to continuous states requires some careful investigation and adaptation of known strategies. \\

Let us have a look at related work: There are quite a few contributions available regarding the optimal control of coupled PDE-ODE systems, cf. \cite{KGH18,BKR17,MST16,KG16,KG15,CPetal2009} and the references therein. The authors mainly focus on the analysis of their specific model and the derivation of first order necessary optimality conditions and provide tailored algorithms for the numerical solution of the optimal control problems. They do not derive discretization error estimates. In \cite{HV03}, the authors deal with the optimal control of laser surface hardening of steel and provide error estimates for a POD Galerkin approximation of the state equation. Error estimates for the optimal control of a coupled PDE-ODE system describing the velocity tracking problem for the evolutionary Navier–Stokes equations are derived in \cite{CC12,CC16} as well as companion papers. Here, the authors require a coupling of the discretization parameters in time and space for the well-posedness of their discretization technique. We emphasize, that in our contribution the discretization parameters can be chosen independently of one another. Our discretization setting is closely related to the techniques analyzed in \cite{MV08,NV11,MV18} for the space-time discretization of linear and semilinear parabolic optimal control problems, respectively. In these contributions the optimal control problem is not constrained by a coupled PDE-ODE system but rather by a single parabolic PDE. Therefore only one variable which carries the evolution in both space and time is considered. Moreover, all contributions referenced so far focus on control problems with smooth state equation. There are much less results regarding error estimates for the optimal control of non-smooth equations available. Here, we want to mention the results concerning error estimates for the optimal control of the obstacle problem \cite{MT13,HKP20} and the references therein.   Error estimates for uncontrolled parabolic equations are given in \cite{EJT85,EJ91,EJ95}. \\

The paper is organized as follows: In Section 2 we state the exact setting of the model problem and present results regarding the continuous PDE-ODE system. In Section 3 we focus on the discretization of the semilinear model problem and prove linear convergence in time and a convergence rate of $\mathcal{O}(h^{\frac{3}{2}-\varepsilon})$ in space of our discretization. Section 4 deals with error estimates for the corresponding control problem. The last section presents numerical examples. \\ 
\section{Properties of the coupled PDE-ODE system}
In this section we establish the principal assumptions on the data, some notation and the basic properties of the coupled PDE-ODE system. \\

Throughout this paper, let $\Omega\subset \mathbb{R}^N,N\in\{2,3\}$, be a convex polygonal domain with boundary $\partial \Omega$ and let $T>0$ be a given real number. The time interval will be denoted by $I:=(\I)$. Moreover, let $\alpha,\beta,\delta,r>0$ be given parameters. The initial state is, unless otherwise stated, a function in $\Lzwo$. The first state $\varphi$ is an element of the state space $V:=L^2(\I;\Heinsnull)$. The second state $d$ should belong to $X:=H^1(\I;\Lzwo)$. The control space is given as $\Heinsnullzwei\coloneqq \{l\in\Heinszwei \colon l(0)=0\}$. We use the $\Heinszwei$-seminorm as the norm on $\Heinsnullzwei$
\[\Vert l\Vert_{\Heinsnullzwei}=\Vert\partial_t l\Vert_{\Lzweizwei}.\] 

Let us state some results regarding the $\max$-operator: \begin{lemma}\label{lem:propertiesmax}
\begin{itemize} 
\item[(i)] The Nemytskii-operator $\max:\Lzwo\to\Lzwo$ associated to $\max:\mathbb{R}\to\mathbb{R}, \max(y)=\max\{y,0\}$ is well-defined and globally Lipschitz continuous with Lipschitz constant 1. 
\item[(ii)] The Nemytskii-operator $\max:\Lzweizwei\to\Lzweizwei$ associated to $\max:\Lzwo\to\Lzwo$ is well-defined and globally Lipschitz continuous with Lipschitz constant 1. 
\item[(iii)] For arbitrary $v,r\in\mathbb{R}, r>0$ we have $\max(v-r)\leq \max(v)\leq \vert v\vert$. This inequality also holds true for $v\in\Lzwo$ and $r\in\mathbb{R},r>0$, that is we have
\begin{equation} \label{eq:propmax3}
\Vert\max(v-r)\Vert_{\Lzwo}\leq\Vert\max(v)\Vert_{\Lzwo}\leq \Vert v\Vert_{\Lzwo}.
\end{equation}
\item[(iv)] The Nemytskii-operator $\max: \Hs \to \Hs$ is well-defined if and only if $0\leq s < \frac{3}{2}$. For $y\in\Hs$, we have
\begin{equation} \label{eq: boundedmaxHs}
\Vert \max(y)\Vert_{\Hs}\leq C\Vert y\Vert_{\Hs}.
\end{equation}
\end{itemize}
\end{lemma}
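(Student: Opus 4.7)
The plan is to dispatch parts (i)--(iii) from elementary pointwise considerations and to treat (iv) as the substantive part, whose proof rests on composition results for Nemytskii operators on fractional Sobolev spaces.

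For (i), I would start from the scalar Lipschitz bound $|\max(a) - \max(b)| \le |a - b|$ for $a, b \in \mathbb{R}$, obtained by checking the four sign combinations. Squaring and integrating over $\Omega$ delivers the $\Lzwo$-Lipschitz estimate, and well-definedness follows from the pointwise bound $|\max(y)| \le |y|$. For (ii), it suffices to apply (i) slicewise in time and integrate over $I$, since both sides of the resulting spatial estimate depend measurably on $t$. For (iii), a pointwise case distinction on the sign of $v-r$ gives $\max(v-r) \le \max(v) \le |v|$; since these terms are nonnegative, taking $\Lzwo$-norms preserves the ordering, which yields \eqref{eq:propmax3}.

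For the positive direction of (iv) I would proceed by the range of $s$. The case $s = 0$ coincides with (i); for $s = 1$ the weak chain rule produces $\nabla \max(y) = \chi_{\{y > 0\}}\,\nabla y$ a.e., which yields $\|\max(y)\|_{\Heins} \le \|y\|_{\Heins}$ at once; for $s \in (0,1)$ the Gagliardo--Slobodeckij seminorm combined with the pointwise Lipschitz inequality delivers the estimate directly. The delicate regime is $s \in (1, 3/2)$, where one must verify that $\chi_{\{y > 0\}}\,\nabla y$ belongs to $H^{s-1}(\Omega;\mathbb{R}^N)$; the decisive point is that $s - 1 < 1/2$, so that multiplication by characteristic functions of sufficiently regular sublevel sets remains bounded on this scale. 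For the converse, a counterexample of the form $y(x) = x_1\,\phi(x)$ with $\phi$ a smooth cutoff suffices: the gradient of $\max(y)$ then carries a jump across the hyperplane $\{x_1 = 0\}$, and a direct computation of the Slobodeckij seminorm across this discontinuity exhibits $\max(y) \notin \Hs$ whenever $s \ge 3/2$.

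The main obstacle will be the range $s \in (1, 3/2)$ of (iv): the required composition estimate is genuinely non-elementary and hinges on multiplier properties of characteristic functions on fractional scales. Rather than reproducing these arguments in full, I would defer to the standard composition-operator theory, e.g. the monograph of Runst and Sickel on nonlinear operators in function spaces, and cite the corresponding truncation result there. Parts (i)--(iii), by contrast, should take only a few lines.
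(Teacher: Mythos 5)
Your proposal is correct and follows essentially the same route as the paper: parts (i)--(iii) are elementary pointwise arguments (the paper simply cites \cite{S17}, Section 5, for (i)--(ii) and notes that (iii) transfers directly from the scalar case), while the substantive part (iv) is in both cases delegated to the composition-operator results of Runst and Sickel \cite{RS96}, Section 5.4 (the paper reduces $\max$ to the absolute value via $\max(y)=\frac{1}{2}(y+\vert y\vert)$ and cites their statement for $\vert\cdot\vert$, including the counterexample for $s\geq\frac{3}{2}$). Your extra self-contained treatment of the range $s\in[0,1]$ and the half-space counterexample are sound but add nothing beyond what the citation already covers.
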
 
\begin{proof}
Items (i) and (ii) are proven for $L^\infty(\Omega)$ and $L^2(\I;L^\infty(\Omega))$ in \cite{S17}, section 5. The proof for our case is identical. Item (iii) holds true for $\max: \mathbb{R}\to \mathbb{R}$ and therefore directly transfers to $\Lzwo$. This leaves item (iv): In \cite{RS96}, section 5.4, the assertion is proven for the absolute value function which is equivalent to the $\max$-function. 
\end{proof}
We use the following short notation for inner products and norms on $\Lzwo$ and $\Lzweizwei$:
\begin{alignat*}{2}
(v,w)&:=(v,w)_{\Lzwo}, \qquad (v,w)_{\ito} &&:=(v,w)_{\Lzweizwei}, \\
\Vert v\Vert &:=\Vert v\Vert_{\Lzwo}, \qquad \qquad \Vert v\Vert_{\ito} &&:= \Vert v\Vert_{\Lzweizwei}. \\
\end{alignat*}
Instead of the (strong) formulation \eqref{mod:nonlinmodbegin}-\eqref{mod:nonlinmodend} we will work with the weak formulation of the problem. We define the bilinear form $B$ 
\begin{equation} \label{eq:bilinearformnonlinear}
B((\varphi,d),(\psi,\lambda))=\alpha(\nabla\varphi,\nabla\psi)_{\ito}+\beta(\varphi-d,\psi)_{\ito}+(\partial_t d,\lambda)_{\ito}.
\end{equation}
Then, the weak formulation reads as follows: Find states $(\varphi,d)\in V\times X$ satisfying
\begin{equation} \label{eq:nonlinearweakformulationcont}
B((\varphi,d),(\psi,\lambda))=(l,\psi)_{\ito}+\frac{1}{\delta}(\max(-\beta(d-\varphi)-r),\lambda)_{\ito} \quad \forall (\psi,\lambda)\in V\times X
\end{equation}
and the initial value condition $d(0)=d_0$. \\

We start with the investigation of the continuous problem. Our first result covers the unique solvability  of the semilinear variational problem \eqref{eq:nonlinearweakformulationcont}.
\begin{proposition} \label{prop:existencesolutionnonlinear}
For a fixed right-hand side $l\in\Lzweizwei$ and initial state $d_0\in\Lzwo$ there exists a unique solution $(\varphi,d)\in V\times X$ of equation \eqref{eq:nonlinearweakformulationcont}. Moreover, the solution exhibits the improved regularity
\begin{align*}
\varphi \in &L^2(\I; H^2(\Omega)\cap H^1_0(\Omega)) \\
d \in &\Heinszwei \hookrightarrow C(\overline{I},\Lzwo).
\end{align*}
\end{proposition}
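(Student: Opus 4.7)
The plan is to prove the proposition by a fixed-point (Picard--Lindelöf) argument, decoupling the elliptic PDE from the semilinear ODE and using that $\max$ is globally Lipschitz by Lemma \ref{lem:propertiesmax}(i).

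First I would observe that for any fixed $d \in \Lzweizwei$ (in particular for $d\in X$), the elliptic equation
\[
-\alpha\Delta\varphi(t)+\beta\varphi(t)=\beta d(t)+l(t),\qquad \varphi(t)|_{\partial\Omega}=0,
\]
admits, for a.e. $t\in I$, a unique weak solution. Since $\Omega$ is a convex polygonal domain and the right-hand side lies in $\Lzwo$, standard elliptic regularity yields $\varphi(t)\in H^2(\Omega)\cap\Heinsnull$ together with a linear stability estimate $\|\varphi(t)\|_{\Hzwei}\le C(\|d(t)\|+\|l(t)\|)$. Integrating in time gives a well-defined bounded linear solution operator
\[
\mathcal{E}\colon \Lzweizwei \to L^2(\I;\Hzwei\cap\Heinsnull),\qquad \mathcal{E}(d)=\varphi,
\]
with $\|\mathcal{E}(d_1)-\mathcal{E}(d_2)\|_{L^2(\I;\Lzwo)}\le C\|d_1-d_2\|_{\Lzweizwei}$. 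This takes care of the elliptic subproblem and already proves the asserted spatial regularity of $\varphi$.

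Next I would substitute $\varphi=\mathcal{E}(d)$ into the ODE \eqref{mod:nonlinmodode}, reducing the coupled system to the single Cauchy problem
\[
\partial_t d(t)=\tfrac{1}{\delta}\max\bigl(-\beta(d(t)-\mathcal{E}(d)(t))-r\bigr),\qquad d(0)=d_0,
\]
to be solved in the Banach space $\Lzwo$. Because $\mathcal{E}$ is linear and acts pointwise in $t$, and because $\max\colon\Lzwo\to\Lzwo$ is globally Lipschitz with constant $1$ (Lemma \ref{lem:propertiesmax}(i)), the right-hand side $F(t,d):=\tfrac{1}{\delta}\max(-\beta(d-\mathcal{E}(d)(t))-r)$ is Carathéodory in $t$ and globally Lipschitz in $d\in\Lzwo$, uniformly in $t\in I$. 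The standard Picard--Lindelöf theorem in Banach spaces (or, equivalently, a Banach fixed-point argument in $C(\overline{I};\Lzwo)$ with an exponentially weighted norm $\|\cdot\|_{K}:=\sup_{t}e^{-Kt}\|\cdot(t)\|$ for sufficiently large $K$) then yields a unique $d\in C(\overline{I};\Lzwo)$ solving the Cauchy problem on the whole interval $I$. Setting $\varphi:=\mathcal{E}(d)$ gives a solution of \eqref{eq:nonlinearweakformulationcont}, and uniqueness for the coupled system follows from a Grönwall estimate on the difference of two hypothetical solutions, again using Lipschitz continuity of $\max$ and the stability of $\mathcal{E}$.

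Finally, to upgrade $d$ to $\Heinszwei$, I would use \eqref{eq:propmax3} to bound the right-hand side of the ODE pointwise by $\tfrac{1}{\delta}(\beta\|d-\varphi\|+r|\Omega|^{1/2})$, which is in $L^2(I)$ since $d,\varphi\in\Lzweizwei$; hence $\partial_t d\in\Lzweizwei$ and $d\in\Heinszwei\hookrightarrow C(\overline{I};\Lzwo)$. The main technical point (and the only step where care is needed) is establishing the contraction on the whole interval $[\I]$ rather than merely on a short initial subinterval; this is the reason for working either with the weighted norm $\|\cdot\|_K$ or with the pointwise-in-time Carathéodory form of Picard--Lindelöf in $\Lzwo$, which then does not require an a priori time-splitting and extension argument.
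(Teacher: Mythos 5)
Your proposal is correct and follows essentially the same route as the paper: the paper's proof simply refers to the linear case (\cite{HRV18}, Prop.~3.1) and notes that the global Lipschitz continuity of $\max\colon\Lzwo\to\Lzwo$ makes that argument carry over, and that argument is precisely your decoupling via the elliptic solution operator $\Phi$ followed by a Banach-space ODE/fixed-point argument for $d$ (the same reduction the paper uses explicitly in Lemma~\ref{lem:existencelzweihminuseins} and Proposition~\ref{prop:existencesemidiscretestatenonlinear}). Your bootstrapping of $d\in\Heinszwei$ from the ODE and of $\varphi\in L^2(\I;\Hzwei\cap\Heinsnull)$ from convex-domain elliptic regularity likewise matches the intended argument.
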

\begin{proof}
The proposition can be proven analogous to \cite{HRV18}, Proposition 3.1, since $\max:\Lzwo\to\Lzwo$ is Lipschitz continuous with constant 1. 
\end{proof}
For later references, we denote by $\Phi: \Lzwo\times\Lzwo\to\Heinsnull$, $\Phi: (l,d)\mapsto \varphi$, the solution operator of the elliptic PDE
\begin{equation} \label{eq:weakvarphicont}
\alpha(\nabla\varphi,\nabla \psi)+\beta(\varphi,\psi)=(\beta d+l,\psi) \quad \forall \psi\in \Heinsnull.
\end{equation}
\begin{lemma}
If we assume $l\in\Heinszwei$ then $\varphi$ is partial differentiable with respect to time and we have $\partial_t\varphi=\Phi(\partial_t l,\partial_t d)$ and $\varphi\in\Heinszwei$. We will require this regularity for the temporal error estimation.
\end{lemma}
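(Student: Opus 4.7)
My plan is to exploit the linearity of the elliptic solution operator $\Phi$. By the Lax--Milgram theorem applied to \eqref{eq:weakvarphicont}, $\Phi \colon \Lzwo \times \Lzwo \to \Heinsnull$ is a bounded linear operator satisfying an estimate of the form $\Vert \Phi(l,d)\Vert_{\Heinsnull}\leq C(\Vert l\Vert + \Vert d\Vert)$. Proposition \ref{prop:existencesolutionnonlinear} already guarantees $d \in \Heinszwei$, so under the additional assumption $l \in \Heinszwei$ both $\partial_t l$ and $\partial_t d$ lie in $\Lzweizwei$. This makes it natural to define the candidate derivative $w(t) := \Phi(\partial_t l(t), \partial_t d(t))$, which by boundedness of $\Phi$ belongs to $L^2(\I;\Heinsnull) \hookrightarrow \Lzweizwei$.

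Next I identify $w$ as the weak time derivative of $\varphi$. Fix $s,t \in \overline{I}$ with $s<t$. Since $l, d \in \Heinszwei \hookrightarrow C(\overline{I};\Lzwo)$, the Bochner form of the fundamental theorem of calculus yields $l(t)-l(s) = \int_s^t \partial_t l(\tau)\,d\tau$ and analogously for $d$. Because $\Phi$ is bounded and linear, it commutes with the Bochner integral, giving $\Phi(l(t)-l(s), d(t)-d(s)) = \int_s^t w(\tau)\,d\tau$. On the other hand, since \eqref{eq:weakvarphicont} holds for a.e.\ $t$ with data $l(t), d(t)$, the linearity of $\Phi$ directly produces $\varphi(t)-\varphi(s) = \Phi(l(t)-l(s), d(t)-d(s))$.

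Combining these two identities yields $\varphi(t)-\varphi(s) = \int_s^t w(\tau)\,d\tau$ for all admissible $s,t$, which shows that $\varphi$ is absolutely continuous from $\overline{I}$ into $\Lzwo$ with weak time derivative $w$. Hence $\partial_t\varphi = \Phi(\partial_t l, \partial_t d)$, and since $w \in \Lzweizwei$ we conclude $\varphi \in \Heinszwei$, as claimed. The only mildly delicate point is the commutation of $\Phi$ with the Bochner integral, but this is a standard consequence of $\Phi$ being a bounded linear operator between Banach spaces and presents no real obstacle.
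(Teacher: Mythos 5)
Your proof is correct. The paper states this lemma without any proof at all (it is immediately followed by the next lemma), so there is nothing to compare against; your argument — boundedness and linearity of $\Phi$ via Lax--Milgram, the fundamental theorem of calculus for Bochner integrals applied to $l,d\in\Heinszwei$, and the fact that a bounded linear operator commutes with the Bochner integral so that $\varphi(t)-\varphi(s)=\int_s^t \Phi(\partial_t l(\tau),\partial_t d(\tau))\,d\tau$ — is exactly the standard way to fill this gap, and it correctly uses the already-established regularity $d\in\Heinszwei$ from Proposition \ref{prop:existencesolutionnonlinear}.
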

\begin{lemma} \label{lem:boundednessstatescontinuous}
In addition to the assumptions of Proposition \ref{prop:existencesolutionnonlinear}, let $l\in\Heinszwei$ hold true. Then, the solution $(\varphi,d)\in V\times X$ fulfills the stability estimates
\begin{equation}
\Vert d\Vert_{L^\infty(\I;\Lzwo)}\leq C\{\Vert d_0\Vert+\Vert l\Vert_{L^1(\I;\Lzwo)}\}
\end{equation}
\begin{equation}
\Vert \nabla^2\varphi\Vert_{\ito}+\Vert \nabla \varphi\Vert_{\ito}+\Vert\varphi\Vert_{\ito}+\Vert d\Vert_{\ito}+\Vert\partial_t d\Vert_{\ito}\leq C\{\Vert d_0\Vert+\Vert l\Vert_{\ito}\}
\end{equation}
\begin{equation}
\Vert\partial_t\varphi\Vert_{\ito}\leq C\{\Vert d_0\Vert+\Vert l\Vert_{\Heinszwei}\}
\end{equation}
with a constant $C>0$. 
\end{lemma}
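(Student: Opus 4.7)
The plan is to peel off the three estimates in order, each building on the preceding one, and to handle the nonlinearity by absorbing the $\max$-operator through Lemma \ref{lem:propertiesmax}(iii), which bounds $\Vert \max(-\beta(d-\varphi)-r)\Vert$ pointwise a.e.\ in time by $\beta\Vert d-\varphi\Vert$.

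For the $L^\infty$-bound (1), I would integrate the ODE \eqref{mod:nonlinmodode} in time to obtain the fixed-point identity
\[
d(t)=d_0+\frac{1}{\delta}\int_0^t \max(-\beta(d-\varphi)-r)\,ds,
\]
take the $\Lzwo$-norm, and invoke Lemma \ref{lem:propertiesmax}(iii) to dominate the integrand by $\beta\Vert d(s)-\varphi(s)\Vert$. To close the loop I need $\Vert\varphi\Vert$ in terms of $\Vert d\Vert$ and the data: testing the elliptic equation \eqref{eq:weakvarphicont} with $\varphi$ itself, dropping the non-negative gradient term, and applying Cauchy--Schwarz gives $\Vert\varphi(t)\Vert\leq \Vert d(t)\Vert+\beta^{-1}\Vert l(t)\Vert$ for a.e.\ $t$. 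Substituting this back produces a linear integral inequality for $\Vert d(t)\Vert$ of standard Gronwall form, which yields (1).

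For (2), the idea is to bootstrap from (1). A Cauchy--Schwarz bound in time converts the $L^\infty(I;\Lzwo)$-bound into a bound on $\Vert d\Vert_{\ito}$, and the pointwise elliptic bound above then yields the analogous bound on $\Vert\varphi\Vert_{\ito}$. Keeping the gradient term when testing \eqref{eq:weakvarphicont} with $\varphi$ and integrating in time gives $\Vert\nabla\varphi\Vert_{\ito}$. The $H^2$-term is the only one requiring extra input: I would appeal to elliptic regularity on the convex polygonal domain $\Omega$, applied pointwise in $t$ to $-\alpha\Delta\varphi(t)+\beta\varphi(t)=\beta d(t)+l(t)$ and squared in time, to obtain $\Vert\nabla^2\varphi\Vert_{\ito}\leq C(\Vert d\Vert_{\ito}+\Vert l\Vert_{\ito})$. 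Finally, the ODE and Lemma \ref{lem:propertiesmax}(iii) give $\Vert\partial_t d(t)\Vert\leq (\beta/\delta)\Vert d(t)-\varphi(t)\Vert$, which on integration in time contributes the last summand of (2).

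For (3), I would use the preceding lemma: $\partial_t\varphi=\Phi(\partial_t l,\partial_t d)$ is the solution of the elliptic problem with right-hand side $\beta\partial_t d+\partial_t l$. Testing with $\partial_t\varphi$ (again discarding the non-negative gradient term) yields $\Vert\partial_t\varphi(t)\Vert\leq \Vert\partial_t d(t)\Vert+\beta^{-1}\Vert\partial_t l(t)\Vert$, and squared integration in time, combined with the bound on $\Vert\partial_t d\Vert_{\ito}$ from (2) and the obvious $\Vert\partial_t l\Vert_{\ito}\leq \Vert l\Vert_{\Heinsnullzwei}\leq \Vert l\Vert_{\Heinszwei}$, closes the proof.

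The one genuinely delicate step is the Gronwall argument in (1), because the max-term couples $d$ and $\varphi$ inside a nonlinearity; the rest reduces to a careful but standard chain of energy estimates. The key observation that unlocks everything is the $\Lzwo$-majorization of the max-nonlinearity in Lemma \ref{lem:propertiesmax}(iii), which strips the non-smoothness out of the a priori bound entirely.
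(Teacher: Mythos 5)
Your proposal is correct and follows the same route the paper sketches: Gronwall's inequality for the $L^\infty(I;\Lzwo)$-bound on $d$ (with the $\max$-term absorbed via Lemma \ref{lem:propertiesmax}(iii) and the elliptic estimate $\Vert\varphi(t)\Vert\leq\Vert d(t)\Vert+\beta^{-1}\Vert l(t)\Vert$), standard energy and elliptic-regularity arguments for the second chain of estimates, and the identity $\partial_t\varphi=\Phi(\partial_t l,\partial_t d)$ combined with the bound on $\Vert\partial_t d\Vert_{\ito}$ for the third. The only cosmetic slip is invoking $\Vert l\Vert_{\Heinsnullzwei}$ for a control that is merely in $\Heinszwei$; the needed inequality $\Vert\partial_t l\Vert_{\ito}\leq\Vert l\Vert_{\Heinszwei}$ holds directly.
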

\begin{proof}
The first assertion follows with Gronwall's inequality. The second assertion may be proven with standard techniques. The third stability estimate is a consequence of the definition of $\partial_t\varphi$ and the estimates from the second assertion. 
\end{proof}
Regarding the optimal control, we need a slightly more general existence result for controls $l\in\LzweiHminuseins$.
\begin{lemma} \label{lem:existencelzweihminuseins}
For a fixed right-hand side $l\in\LzweiHminuseins$ and initial state $d_0\in\Lzwo$, there exists a unique solution $(\varphi,d)\in V\times X$. The control-to-state operator $S:\LzweiHminuseins $ $\to V\times X$ is Lipschitz continuous, that is there exists a constant $L_S>0$ such that
\begin{equation}
\Vert S(l_1)-S(l_2)\Vert_{V\times X}\leq L_S\Vert l_1-l_2\Vert_{\LzweiHminuseins}
\end{equation}
for all $l_1,l_2\in\LzweiHminuseins$.
\end{lemma}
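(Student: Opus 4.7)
My plan is to first extend the existence result of Proposition~\ref{prop:existencesolutionnonlinear} to the less regular right-hand side $l\in\LzweiHminuseins$, and then derive the Lipschitz estimate by a standard difference argument combined with Gronwall's inequality.

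For existence, I would essentially recycle the fixed-point argument of Proposition~\ref{prop:existencesolutionnonlinear} (i.e.\ that of \cite{HRV18}, Proposition~3.1). The only structural change is that the elliptic problem \eqref{eq:weakvarphicont} must now be solved with data in $\Hminuseins$: given any $d\in\Lzweizwei$ and $l\in\LzweiHminuseins$, the right-hand side $\beta d+l$ lies in $\LzweiHminuseins$, and Lax--Milgram applied to the bilinear form $\alpha(\nabla\cdot,\nabla\cdot)+\beta(\cdot,\cdot)$ on $\Heinsnull$ still yields a unique $\varphi(t)\in\Heinsnull$ for a.e.\ $t$, together with the estimate $\Vert\varphi(t)\Vert_{\Heinsnull}\le C(\Vert d(t)\Vert+\Vert l(t)\Vert_{\Hminuseins})$. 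One loses the $\Hzwei$-regularity of $\varphi$, but this was not used in the contraction argument for the coupled system, which relied only on $\varphi\in\Heinsnull$ and the global Lipschitz continuity of the $\max$-operator on $\Lzwo$ (Lemma~\ref{lem:propertiesmax}(i)). Hence the Picard/Banach fixed-point step on the ODE side goes through verbatim and produces a unique $(\varphi,d)\in V\times X$.

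For Lipschitz continuity, let $l_1,l_2\in\LzweiHminuseins$, $(\varphi_i,d_i)=S(l_i)$, and put $\varphi:=\varphi_1-\varphi_2$, $d:=d_1-d_2$, $l:=l_1-l_2$. Testing the difference of the elliptic equations with $\varphi(t)\in\Heinsnull$ and applying Young's inequality gives, for a.e.\ $t$,
\begin{equation*}
\alpha\Vert\nabla\varphi(t)\Vert^2+\tfrac{\beta}{2}\Vert\varphi(t)\Vert^2\le C\bigl(\Vert d(t)\Vert^2+\Vert l(t)\Vert_{\Hminuseins}^2\bigr),
\end{equation*}
so that $\Vert\varphi(t)\Vert_{\Heinsnull}^2\le C(\Vert d(t)\Vert^2+\Vert l(t)\Vert_{\Hminuseins}^2)$. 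Subtracting the two ODEs and using $d(0)=0$ together with the $1$-Lipschitz property of $\max\colon\Lzwo\to\Lzwo$, one obtains pointwise in $t$
\begin{equation*}
\Vert\partial_t d(t)\Vert\le \tfrac{\beta}{\delta}\bigl(\Vert d(t)\Vert+\Vert\varphi(t)\Vert\bigr),
\end{equation*}
hence $\Vert d(t)\Vert\le\int_0^t\Vert\partial_t d(s)\Vert\,ds$ yields, after squaring and using the previous elliptic bound,
\begin{equation*}
\Vert d(t)\Vert^2\le C\int_0^t\bigl(\Vert d(s)\Vert^2+\Vert l(s)\Vert_{\Hminuseins}^2\bigr)ds.
\end{equation*}
Gronwall's lemma then gives $\Vert d\Vert_{L^\infty(I;\Lzwo)}^2\le C\Vert l\Vert_{\LzweiHminuseins}^2$. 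Reinserting this into the elliptic and ODE bounds provides the $V$-norm estimate for $\varphi$ and the $X$-norm estimate for $d$ (including the $\partial_t d$-contribution), which together yield the claimed Lipschitz inequality with a suitable constant $L_S>0$.

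The only genuine subtlety I anticipate is the interplay between the elliptic control of $\varphi$ by $d$ and $l$ and the Volterra-type control of $d$ by $\varphi$: one must avoid circularity by first eliminating $\varphi$ via the elliptic estimate and only then invoking Gronwall on $\Vert d(t)\Vert^2$; the quadratic form of the resulting integral inequality is what makes the argument close cleanly. Everything else is a routine adaptation of the proof of Proposition~\ref{prop:existencesolutionnonlinear}.
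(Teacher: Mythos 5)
Your proposal is correct and follows essentially the same route as the paper: the existence part recycles the fixed-point argument of Proposition \ref{prop:existencesolutionnonlinear}, and the Lipschitz estimate first uses the Lipschitz continuity of the elliptic solution operator $\Phi$ to reduce everything to the $d$-component, then closes the argument on the subtracted ODEs via a Gronwall-type bound. The only difference is cosmetic: you carry out the Gronwall step explicitly, whereas the paper delegates it to a cited ODE continuous-dependence theorem (\cite{EE04}, Thm.\ 7.5.3).
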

\begin{proof}
The existence of a unique solution follows as in the previous proposition. This leaves to prove the Lipschitz continuity. Thus, we denote by $(\varphi_1,d_1)=S(l_1)$ and $(\varphi_2,d_2)=S(l_2)$ the solution of the state equation for two different right-hand sides $l_1,l_2\in\LzweiHminuseins$. As $\Phi$ is known to be Lipschitz continuous, we have
\begin{equation} \label{eq:existencelzweihminuseinsHR1}
\Vert (\varphi_1-\varphi_2)(t)\Vert_{\Heins}\leq L_\Phi\{\Vert (d_1-d_2)(t)\Vert+ \Vert (l_1-l_2)(t)\Vert_{\Hminuseins}\}.
\end{equation}
Therefore, it suffices to prove the Lipschitz continuity with respect to $d$. We subtract the reduced ODEs to arrive at
\[\partial_t (d_1-d_2)(t)=\frac{1}{\delta}\left(\max(-\beta(d_1(t)-\varphi_1(t))-r)-\max(-\beta(d_2(t)-\varphi_2(t))-r)\right).\] Lipschitz continuity can now be achieved by means of \cite{EE04}, Thm. 7.5.3.
\end{proof}

\section{Numerical Analysis of the semilinear model equation} \label{NAforstateeq}
This section is devoted to the error estimation for the discretization of the coupled PDE-ODE system. 
\subsection{Semidiscretization in time}
For the discretization in time we will employ discontinuous constant finite elements. Therefore, we consider a partition of the time interval $\overline{I}=[\I]$ as
\[\overline{I}=\{0\}\cup I_1\cup ... \cup I_M\]
with subintervals $I_m=(t_{m-1},t_m]$ of length $\tau_m$ and time points
\[0=t_0<t_1<...<t_{M-1}<t_M=T.\]
We set $\tau:=\max \{\tau_m : m=1,...,M\}$. The semidiscrete trial and test spaces are given as
\[\Vtau:=\{v_\tau\in V: v_{\tau\vert_{I_m}}\in\mathbb{P}_0(I_m;\Heinsnull), m=1,...,M\},\]
\[\Xtau:=\{d_\tau\in \Lzweizwei : d_{\tau\vert_{I_m}}\in\mathbb{P}_0(I_m;\Lzwo), m=1,...,M\}.\]
Note, that $\Vtau\subset V$ but $\Xtau\not\subset X$. Moreover, $\Vtau$ is dense in $\Xtau$ due to the dense embedding of $\Heinsnull\overset{\text{d}}{\hookrightarrow}\Lzwo$. We use the notation
\[(v,w)_{\imto}:=(v,w)_{L^2(I_m;\Lzwo)} \qquad \text{ and } \qquad \Vert v\Vert_{\imto} := \Vert v\Vert_{L^2(I_m;\Lzwo)}.\]
To express the jumps possibly occurring at the nodes $t_m$ we define
\[v^+_{\tau,m}:=\lim\limits_{t\rightarrow 0^+} v_{\tau}(t_m+t), \quad v^-_{\tau,m}:=\lim\limits_{t\rightarrow 0^+} v_{\tau}(t_m-t)=v_\tau(t_m), \quad [v_\tau]_m=v^+_{\tau,m}-v^-_{\tau,m}.\]
Note, that for functions piecewise constant in time the definition reduces to
\[v^+_{\tau,m}=v_\tau(t_{m+1})=:v_{\tau,m+1}, \qquad v^-_{\tau,m}=v_{\tau}(t_m)=: v_{\tau,m},\quad [v_\tau]_m=v_{\tau,m+1}-v_{\tau,m}.\]
The semidiscrete bilinear form $\B$ is given as
\begin{align*}
\B((\vt,\dt),(\psi,\lambda))&=\alpha(\nabla\vt,\nabla\psi)_{\ito}+\beta(\vt,\psi)_{\ito}-\beta(\dt,\psi)_{\ito}\\ &+\sum\limits_{m=1}^M(\partial_t\dt,\lambda)_{I_m}+\sum\limits_{m=2}^M([\dt]_{m-1},\lambda_{m-1}^+)+(d^+_{\tau,0},\lambda_0^+).
\end{align*}
Then, the semidiscrete semilinear problem is given as follows: Find states $(\vt,\dt)\in\Vtau\times\Xtau$ such that
\begin{equation} \label{eq:semidiscreteprimalproblemnonlinear}
\B((\vt,\dt),(\psi,\lambda))=(l,\psi)_{\ito}+\frac{1}{\delta}(\max(-\beta(\dt-\vt)-r),\lambda)_{\ito}+(d_0,\lambda_0^+) 
\end{equation}
is fulfilled for all $(\psi,\lambda)\in\Vtau\times\Xtau$. \\

We will require the interpolation/projection onto $\Xtau$ and $\Vtau$, respectively.  Therefore, we define the semidiscrete interpolation operator $I_\tau : C(\overline{I};\Lzwo)\to\Xtau$ with $I_\tau d_{\vert_ {I_m}}\in \mathbb{P}_0(I_m;\Lzwo)$ via $(I_\tau d)(t_m)=d(t_m)$ for $m=1,...,M$. For the projection we employ the standard $L^2$-projection in time $P_\tau : \Lzweizwei\to\Xtau$ given by $P_\tau\varphi_{\vert_{I_m}}:=\frac{1}{\tau_m}\int\limits_{I_m} \varphi(t)dt$. Both operators will always be denoted by the same symbols despite possibly different domains and ranges. Note, that if $\varphi\in V$ then $P_\tau\varphi\in\Vtau$ as integration in time preserves the spatial regularity due to the definition of the Bochner integral. In particular, we have \begin{equation} \label{eq:nablaPtvarphi}(\varphi-P_\tau\varphi,\psi)_{I\times\Omega}=(\nabla \varphi-\nabla P_\tau\varphi,\nabla\psi)_{I\times\Omega}=0\end{equation} for any $\psi\in\Vtau$. \\

Having introduced all necessary notation, we have a look at the unique solvability of the semidiscrete problem next.
\begin{proposition} \label{prop:existencesemidiscretestatenonlinear}
Let $l\in\Lzweizwei$ and $d_0\in\Lzwo$ be given. Then, the semidiscrete semilinear problem \eqref{eq:semidiscreteprimalproblemnonlinear} possesses a unique solution $(\vt,\dt)\in\Vtau\times\Xtau$ provided that $\tau$ is chosen small enough. 
\end{proposition}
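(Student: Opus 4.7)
The plan is to decouple the semidiscrete system time step by time step and apply the Banach fixed-point theorem on the piecewise constant damage variable. Since every function in $\Vtau$ and $\Xtau$ is piecewise constant in time, testing \eqref{eq:semidiscreteprimalproblemnonlinear} with $(\psi,\lambda)$ supported on a single interval $I_m$ collapses the dG time-derivative part: the term $(\partial_t\dt,\lambda)_{I_m}$ vanishes and the jump contribution reduces to $(\dtm-d_{\tau,m-1},\chi)$ for any spatial test function $\chi\in\Lzwo$. Hence, proceeding inductively in $m$ and assuming $d_{\tau,m-1}\in\Lzwo$ is already known, one has to find $(\vtm,\dtm)\in\Heinsnull\times\Lzwo$ satisfying the elliptic equation
\begin{equation*}
\alpha(\nabla\vtm,\nabla\psi)+\beta(\vtm,\psi)=\beta(\dtm,\psi)+(P_\tau l_{|I_m},\psi)\quad\forall\psi\in\Heinsnull
\end{equation*}
together with the algebraic relation
\begin{equation*}
(\dtm-d_{\tau,m-1},\chi)=\frac{\tau_m}{\delta}\bigl(\max(-\beta(\dtm-\vtm)-r),\chi\bigr)\quad\forall\chi\in\Lzwo.
\end{equation*}

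Next I would eliminate $\vtm$ by invoking the solution operator $\Phi$ of \eqref{eq:weakvarphicont}: writing $\vtm=\Phi(P_\tau l_{|I_m},\dtm)$, the problem reduces to the fixed-point equation $\dtm=F(\dtm)$ in $\Lzwo$, where
\begin{equation*}
F(d):=d_{\tau,m-1}+\frac{\tau_m}{\delta}\max\bigl(\beta(\Phi(P_\tau l_{|I_m},d)-d)-r\bigr).
\end{equation*}
By linearity of $\Phi$ with respect to the second argument we have $\Phi(P_\tau l_{|I_m},d_1)-\Phi(P_\tau l_{|I_m},d_2)=\Phi(0,d_1-d_2)$, and testing the PDE defining $\Phi(0,\cdot)$ with $\psi=\Phi(0,d_1-d_2)$ yields the a priori bound $\|\Phi(0,d_1-d_2)\|\leq\|d_1-d_2\|$. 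Combining this with the Lipschitz estimate $\|\max(u)-\max(v)\|\leq\|u-v\|$ from Lemma \ref{lem:propertiesmax}(i), I obtain
\begin{equation*}
\|F(d_1)-F(d_2)\|\leq \frac{\tau_m\beta}{\delta}\bigl(\|\Phi(0,d_1-d_2)\|+\|d_1-d_2\|\bigr)\leq \frac{2\tau_m\beta}{\delta}\|d_1-d_2\|.
\end{equation*}
Choosing $\tau$ small enough that $2\tau\beta/\delta<1$ makes $F$ a strict contraction on $\Lzwo$, so Banach's fixed-point theorem provides a unique $\dtm\in\Lzwo$. Setting $\vtm:=\Phi(P_\tau l_{|I_m},\dtm)\in\Heinsnull$ then yields the unique pair $(\vtm,\dtm)$ on $I_m$, and iterating over $m=1,\dots,M$ with the initial value $d_{\tau,0}:=d_0$ assembles the unique global semidiscrete solution $(\vt,\dt)\in\Vtau\times\Xtau$.

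The main technical obstacle is the identification of the correct contraction constant: a naive Lipschitz estimate would involve $\|\Phi(0,d_1-d_2)-(d_1-d_2)\|$ and $d_1-d_2$ generally lies only in $\Lzwo$, so one cannot test the difference equation with gradient terms. The observation that splits this difficulty is that the $\max$-Lipschitz bound together with the energy estimate $\|\Phi(0,\cdot)\|\leq\|\cdot\|$ is enough; no higher regularity of the argument is required. This also explains why the smallness condition on $\tau$ is only needed with respect to the parameters $\beta$ and $\delta$ and not on any data of the control $l$ or the initial value $d_0$.
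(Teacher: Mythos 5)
Your proposal is correct and follows essentially the same route as the paper: decouple interval by interval, reduce to the fixed-point equation $d_{\tau,m}=d_{\tau,m-1}+\frac{\tau_m}{\delta}\max(-\beta(d_{\tau,m}-\Phi(P_\tau l_{\vert I_m},d_{\tau,m}))-r)$ in $\Lzwo$, and apply Banach's fixed-point theorem using the Lipschitz continuity of $\max$ and $\Phi$ under a smallness condition on $\tau$. The only (harmless) difference is that you make the Lipschitz constant of $\Phi$ explicit via the energy estimate $\Vert\Phi(0,\cdot)\Vert\leq\Vert\cdot\Vert$, whereas the paper keeps a generic $L_\Phi$ and requires $\frac{\beta}{\delta}\tau_m(1+L_\Phi)<1$.
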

\begin{proof}
Similar to the linear case (see \cite{HRV18}, Prop. 3.3), the unique solution $\vt\in L^\infty(\I;\Heinsnull)$ is given as $\vt(t,x)=\sum\limits_{m=1}^M \varphi_{\tau ,m}(x)\chi_{I_m}(t)$ with $\varphi_{\tau ,m}=\Phi(P_\tau l_{\vert I_m},d_{\tau ,m})$ while one can prove the existence of a unique $\dt\in L^\infty(\I;\Lzwo)$ by the application of Banach's fixed point theorem to the reduced fixed point equation in $\Lzwo$
\begin{equation} \label{eq:semidiscretefixpointeqnonlinearoneinterval}
d_{\tau,m}=F_m(d_{\tau,m})=d_{\tau,m-1}+\frac{\tau_m}{\delta}\max(-\beta(d_{\tau,m}-\Phi(P_\tau l_{\vert I_m},d_{\tau,m}))-r)
\end{equation}
on each subinterval $I_m,m=1,...,M$ starting with $d_{\tau,0}=d_0$. Due to the Lipschitz continuity of $\Phi$ and $\max$ it is easy to prove that $F_m:\Lzwo\to\Lzwo$ is a contraction if $\frac{\beta}{\delta}\tau_m(1+L_\Phi)<1$ for all $m=1,\ldots,M$. 
\end{proof}

Our next results cover the stability of the semidiscrete solution.
\begin{lemma} \label{lem:stabestimateinftyzwo}
For the solution $(\vt,\dt)\in\Vtau\times\Xtau$ of the semidiscrete state equation \eqref{eq:semidiscreteprimalproblemnonlinear} with right-hand side $l\in\Lzweizwei$ and initial state $d_0\in\Lzwo$ the stability estimate
\begin{equation}
\label{eq:stabestimateinftytwo}
\Vert\dt\Vert_{\Linfzwo}\leq C\{\Vert d_0\Vert+\Vert l\Vert_{\LL{1}{2}}\}
\end{equation}
holds true with a constant $C>0$ independent of $\tau$ provided that $\tau$ is small enough. 
\end{lemma}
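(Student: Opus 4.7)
The plan is to exploit the fixed-point characterization \eqref{eq:semidiscreteprimalproblemnonlinear} of $\dt$ on each subinterval from the proof of Proposition \ref{prop:existencesemidiscretestatenonlinear}, combine it with the sub-linear bound on $\max$ from Lemma \ref{lem:propertiesmax}(iii), absorb the implicit term using smallness of $\tau$, and close the estimate with a discrete Gronwall argument.

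First I would start from
\[
d_{\tau,m}=d_{\tau,m-1}+\frac{\tau_m}{\delta}\max(-\beta(d_{\tau,m}-\varphi_{\tau,m})-r),
\]
take the $\Lzwo$-norm on both sides and use \eqref{eq:propmax3} to get
\[
\Vert d_{\tau,m}\Vert \le \Vert d_{\tau,m-1}\Vert + \frac{\beta\tau_m}{\delta}\bigl(\Vert d_{\tau,m}\Vert + \Vert\varphi_{\tau,m}\Vert\bigr).
\]
Next, I would bound $\Vert\varphi_{\tau,m}\Vert$ by testing the elliptic relation $\varphi_{\tau,m}=\Phi(P_\tau l|_{I_m},d_{\tau,m})$ with $\varphi_{\tau,m}$ itself: dropping the non-negative $\alpha\Vert\nabla\varphi_{\tau,m}\Vert^2$ term and applying Young's inequality yields $\Vert\varphi_{\tau,m}\Vert \le \Vert d_{\tau,m}\Vert + \tfrac{1}{\beta}\Vert P_\tau l|_{I_m}\Vert$.

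Plugging this back in produces
\[
\Bigl(1-\tfrac{2\beta\tau_m}{\delta}\Bigr)\Vert d_{\tau,m}\Vert \le \Vert d_{\tau,m-1}\Vert + \tfrac{\tau_m}{\delta}\Vert P_\tau l|_{I_m}\Vert.
\]
For $\tau$ small enough, say $\tfrac{2\beta\tau}{\delta}\le \tfrac{1}{2}$, the factor on the left is bounded below by a positive constant, so after absorbing it one arrives at an inequality of the form
\[
\Vert d_{\tau,m}\Vert \le (1+C\tau_m)\Vert d_{\tau,m-1}\Vert + C\tau_m\Vert P_\tau l|_{I_m}\Vert.
\]
A key observation is that, by the definition of $P_\tau$ and Jensen's inequality,
\[
\tau_m\Vert P_\tau l|_{I_m}\Vert \le \int_{I_m}\Vert l(t)\Vert\,dt,
\]
which produces the desired $L^1(I;\Lzwo)$-norm on the data.

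A discrete Gronwall argument then yields
\[
\Vert d_{\tau,m}\Vert \le e^{CT}\Bigl(\Vert d_0\Vert + C\Vert l\Vert_{\LL{1}{2}}\Bigr)
\]
for every $m=0,\ldots,M$. Since $\dt$ is piecewise constant in time, $\Vert\dt\Vert_{\Linfzwo}=\max_{m}\Vert d_{\tau,m}\Vert$, and the claim follows. The main subtlety I expect is the implicit step from the fixed-point equation: the bound on $\max$ introduces $\Vert d_{\tau,m}\Vert$ on the right-hand side and has to be absorbed on the left, which is precisely what forces the smallness of $\tau$ assumption. Everything else is routine once the $L^1$-in-time conversion of $\tau_m\Vert P_\tau l|_{I_m}\Vert$ is exploited.
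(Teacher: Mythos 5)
Your proposal is correct and follows essentially the same route as the paper: take norms in the per-interval fixed-point equation, use the sublinear bound $\Vert\max(v-r)\Vert\le\Vert v\Vert$, control $\Vert\varphi_{\tau,m}\Vert$ in terms of $\Vert d_{\tau,m}\Vert$ and the data, absorb the implicit term for small $\tau$, convert $\tau_m\Vert P_\tau l\vert_{I_m}\Vert$ into $\Vert l\Vert_{L^1(I_m;\Lzwo)}$, and close by induction/discrete Gronwall. The only cosmetic difference is that you derive the bound on $\Vert\varphi_{\tau,m}\Vert$ by an explicit energy estimate for the elliptic solve, whereas the paper simply invokes the Lipschitz continuity of $\Phi$ together with $\Phi(0,0)=0$.
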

\begin{proof}
At first, we take norms on both sides of \eqref{eq:semidiscretefixpointeqnonlinearoneinterval}. Then, the triangle inequality, \eqref{eq:propmax3}, the Lipschitz continuity of $\Phi$ and $\Phi(0,0)=0$ yield
\begin{align*}
\Vert d_{\tau,m}\Vert&\leq \Vert d_{\tau,m-1}\Vert+ \frac{\tau_m}{\delta}\beta(1+L_\Phi)\Vert d_{\tau,m}\Vert+\frac{\beta}{\delta}L_\Phi\int\limits_{I_m}\Vert l(t)\Vert_{\Lzwo} \,dt.
\end{align*}
With the assumption $\tau_m\frac{\beta}{\delta}(1+L_\Phi)<1$, we arrive at
\[\Vert d_{\tau,m}\Vert \leq \frac{1}{1-\tau_m\frac{\beta}{\delta}(1+L_\Phi)}\left(\Vert d_{\tau,m-1}\Vert+\frac{\beta}{\delta}L_\Phi\Vert l\Vert_{L^1(I_m;\Lzwo)}\right). \]
Induction leads to
\begin{align}
\Vert d_{\tau,m}\Vert&\leq \prod\limits_{j=1}^M\frac{1}{1-\tau_j\frac{\beta}{\delta}(1+L_\Phi)}\left(\Vert d_0\Vert+\frac{\beta}{\delta}L_\Phi\Vert l\Vert_{L^1(I;\Lzwo)}\right). \label{eq:abschdtmzwischenerg}
\end{align}
Next, due to 
\[\prod\limits_{j=1}^M\frac{1}{1-\tau_j\frac{\beta}{\delta}(1+L_\Phi)}\leq \exp\left(\frac{\frac{\beta}{\delta}(1+L_\Phi)}{1-\tau\frac{\beta}{\delta}(1+L_\Phi)}\sum\limits_{j=1}^M\tau_j\right)=\exp\left(\frac{\frac{\beta}{\delta}(1+L_\Phi)}{1-\tau\frac{\beta}{\delta}(1+L_\Phi)}T\right)\]
$\prod\limits_{j=1}^M\frac{1}{1-\tau_j\frac{\beta}{\delta}(1+L_\Phi)}$ is bounded from above by a sequence converging in $\tau$. Thus, there exists an upper bound $C>0$ independent of $\tau$. If we insert this upper bound into \eqref{eq:abschdtmzwischenerg}, we get the final estimate
\[\Vert d_{\tau,m}\Vert \leq C\left(\Vert d_0\Vert+\frac{\beta}{\delta}L_\Phi\Vert l\Vert_{L^1(\I;\Lzwo)}\right).\]
Since the constant is independent of $\tau$ and $m$, this finishes the proof.
\end{proof}
\begin{theorem} \label{thm:stabestimatesprimalnonlinear}
For the solution $(\vt,\dt)\in\Vtau\times\Xtau$ of the semidiscrete state equation \eqref{eq:semidiscreteprimalproblemnonlinear} with right-hand side $l\in\Lzweizwei$ and initial state $d_0\in\Lzwo$ the stability estimate
\begin{equation}
\label{eq:stabestimatedtwotwo}
\Vert\Delta \vt\Vert^2_{\ito}+\Vert\nabla\vt\Vert^2_{\ito}+\Vert\vt\Vert^2_{\ito}+\Vert\dt\Vert_{{\ito}}^2
+\sum\limits_{m=1}^M\tau_m^{-1}\Vert[\dt]_{m-1}\Vert^2 \leq C\{\Vert d_0\Vert^2+\Vert l\Vert_{\ito}^2\}
\end{equation}
holds true with a constant $C>0$ independent of $\tau$, provided that $\tau$ is small enough. 
\end{theorem}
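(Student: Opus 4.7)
The plan is to bound each of the four terms on the left-hand side separately, relying on the already-established $L^\infty(L^2)$-bound from Lemma~\ref{lem:stabestimateinftyzwo}, an energy-type test of \eqref{eq:semidiscreteprimalproblemnonlinear}, elliptic $H^2$-regularity, and the reduced fixed-point equation \eqref{eq:semidiscretefixpointeqnonlinearoneinterval} for the jump term. First, Lemma~\ref{lem:stabestimateinftyzwo} together with H\"older's inequality in time immediately gives
\[\Vert \dt\Vert^2_{\ito}\leq T\Vert\dt\Vert^2_{\Linfzwo}\leq C(\Vert d_0\Vert^2+\Vert l\Vert^2_{\LL{1}{2}})\leq C(\Vert d_0\Vert^2+\Vert l\Vert^2_{\ito}),\]
which disposes of the $\Lzweizwei$-contribution on $\dt$.

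Next, I test \eqref{eq:semidiscreteprimalproblemnonlinear} with $(\psi,\lambda)=(\vt,0)$; since $\lambda\equiv 0$ annihilates both the ODE part of $\B$ and the nonlinear right-hand side, there results
\[\alpha\Vert\nabla\vt\Vert^2_{\ito}+\beta\Vert\vt\Vert^2_{\ito}=\beta(\dt,\vt)_{\ito}+(l,\vt)_{\ito},\]
and Young's inequality (absorbing a small multiple of $\Vert\vt\Vert^2_{\ito}$ on the left) yields $\Vert\nabla\vt\Vert^2_{\ito}+\Vert\vt\Vert^2_{\ito}\leq C(\Vert\dt\Vert^2_{\ito}+\Vert l\Vert^2_{\ito})$. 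For the $\Vert\Delta\vt\Vert^2_{\ito}$-term I use that on each $I_m$ the component $\varphi_{\tau,m}=\Phi((P_\tau l)|_{I_m},d_{\tau,m})$ satisfies the elliptic problem $-\alpha\Delta\varphi_{\tau,m}+\beta\varphi_{\tau,m}=\beta d_{\tau,m}+(P_\tau l)|_{I_m}$ with homogeneous Dirichlet boundary data. As $\Omega$ is convex polygonal, standard $H^2$-regularity delivers $\Vert\Delta\varphi_{\tau,m}\Vert^2\leq C(\Vert d_{\tau,m}\Vert^2+\Vert(P_\tau l)|_{I_m}\Vert^2)$; multiplying by $\tau_m$, summing over $m$, and invoking $L^2$-stability of $P_\tau$ produces the desired bound on $\Vert\Delta\vt\Vert^2_{\ito}$.

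The jump term is the only nontrivial ingredient, because the obvious test $\lambda=\dt$ only yields $\Vert[\dt]_{m-1}\Vert^2$, not the $\tau_m^{-1}$-weighted version demanded by the theorem. Instead, I read off the required identity directly from the Banach fixed-point recursion \eqref{eq:semidiscretefixpointeqnonlinearoneinterval}: setting $d_{\tau,0}:=d_0$, one has $[\dt]_{m-1}=d_{\tau,m}-d_{\tau,m-1}=\tfrac{\tau_m}{\delta}\max(-\beta(d_{\tau,m}-\varphi_{\tau,m})-r)$ for every $m=1,\dots,M$. Since $\dt$ and $\vt$ are piecewise constant in time, the argument of $\max$ is constant on $I_m$, so squaring and dividing by $\tau_m$ gives
\[\sum_{m=1}^M \tau_m^{-1}\Vert[\dt]_{m-1}\Vert^2=\frac{1}{\delta^2}\int_I\Vert\max(-\beta(\dt-\vt)-r)\Vert^2\,dt\leq \frac{2\beta^2}{\delta^2}\bigl(\Vert\dt\Vert^2_{\ito}+\Vert\vt\Vert^2_{\ito}\bigr),\]
where the last inequality uses Lemma~\ref{lem:propertiesmax}(iii) together with $|\max(-\beta w-r)|\leq\beta|w|$. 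Assembling the four bounds finishes the proof. The main obstacle I foresee is precisely this reformulation of the jump term: a direct energy test cannot produce the weight $\tau_m^{-1}$, so one has to go through the fixed-point identity, whose applicability to the initial jump $[\dt]_0$ must be checked via the convention $d_{\tau,0}=d_0$ that is built into \eqref{eq:semidiscretefixpointeqnonlinearoneinterval}.
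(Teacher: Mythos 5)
Your proof is correct, and it follows the route the paper intends: the paper itself only defers to the linear companion paper [HRV18, Thm.\ 3.7] with the remark that the Lipschitz continuity of $\max$ with constant $1$ makes the argument carry over, and your ingredients --- the $L^\infty(\I;\Lzwo)$ bound of Lemma \ref{lem:stabestimateinftyzwo}, elliptic $H^2$-regularity for the intervalwise problems $\varphi_{\tau,m}=\Phi(P_\tau l_{\vert I_m},d_{\tau,m})$, and the fixed-point recursion \eqref{eq:semidiscretefixpointeqnonlinearoneinterval} for the $\tau_m^{-1}$-weighted jumps --- are exactly what that argument rests on. Your observation that the weighted jump term cannot come from a plain energy test (which only produces $\sum_m\Vert[\dt]_{m-1}\Vert^2$) but must be read off the pointwise recursion, with the convention $d_{\tau,0}=d_0$ for the initial jump, is the one genuinely non-obvious point and you handle it correctly via Lemma \ref{lem:propertiesmax}(iii).
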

\begin{proof}
The assertion may be proven as in the linear case, see \cite{HRV18}, Thm 3.7, because $\max\colon\Lzwo\to\Lzwo$ is Lipschitz continuous with constant 1. 
\end{proof}

We will make use of an auxiliary (dual) equation of the following form: Find dual states $(\zt,\pt)\in\Vtau\times\Xtau$ such that
\begin{equation}
\label{eq:semidiscretedualstatenonlinear}
\B((\psi,\lambda),(\zt,\pt))+(\lambda-\psi,f\pt)_{\ito}=(\psi,g_1)_{\ito}+(\lambda,g_2)_{\ito}+(p_T,\lambda_M^-)
\end{equation}
is fulfilled for all $(\psi,\lambda)\in\Vtau\times\Xtau$. For the moment, we only assume that $g_1,g_2\in\Lzweizwei$ and $p_T\in\Lzwo$ are given data. The function $f$ should belong to $L^\infty(I\times\Omega)$ with $\vert f(t,x)\vert\leq\frac{\beta}{\delta}$ for almost all $(t,x)\in I\times\Omega$. We will later use a specific function $f$ that satisfies these assumptions. The dual representation of $\B$ is
\begin{align} \label{eq:Bdualnonlinear}
\B((\psi,\lambda),(\zt,\pt))=&\alpha(\nabla\zt,\nabla\psi)_{\ito}+\beta(\zt,\psi)_{\ito}-\beta(\lambda,\zt)_{\ito} \\ \nonumber
&-\sum\limits_{m=1}^M(\partial_t\pt,\lambda)_{\imto}-\sum\limits_{m=1}^{M-1}([\pt]_m,\lambda_m^-)+(p_{\tau,M}^-,\lambda_M^-).
\end{align}
Regarding the existence of a unique solution, we have the following result
\begin{proposition} \label{cor: stabestimatedualtimenonlinear}
The auxiliary dual equation \eqref{eq:semidiscretedualstatenonlinear} possesses a unique solution $(\zt,\pt)\in\Vtau\times\Xtau$ for given data $g_1,g_2\in\Lzweizwei$, $p_T\in\Lzwo$ and $f\in L^\infty(I\times\Omega)$ with $\vert f(t,x)\vert\leq\frac{\beta}{\delta}$ for almost all $(t,x)\in I\times\Omega$, provided that $\tau$ is sufficiently small.
\end{proposition}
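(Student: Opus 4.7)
The plan is to mimic the forward-in-time fixed-point construction used in Proposition~\ref{prop:existencesemidiscretestatenonlinear}, but carry it out backward in time, $m=M,M-1,\ldots,1$. Since every $\pt\in\Xtau$ is piecewise constant in time, the distributional derivative $\partial_t\pt$ vanishes on each $I_m$, so the semidiscrete dual system~\eqref{eq:semidiscretedualstatenonlinear} decouples into local problems on each subinterval that can be solved sequentially from the terminal time downward.

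Testing~\eqref{eq:semidiscretedualstatenonlinear} with $\lambda=0$ and $\psi=\psi_m\chi_{I_m}$, $\psi_m\in\Heinsnull$, and dividing by $\tau_m$ yields the elliptic relation
\[\alpha(\nabla z_{\tau,m},\nabla\psi_m)+\beta(z_{\tau,m},\psi_m)=((P_\tau g_1)_m+\tilde f_m p_{\tau,m},\psi_m)\qquad \forall\psi_m\in\Heinsnull,\]
where $\tilde f_m:=\tfrac{1}{\tau_m}\int_{I_m}f(t)\,dt\in L^\infty(\Omega)$ satisfies $|\tilde f_m|\leq\beta/\delta$ a.e., and $(P_\tau g_i)_m:=\tfrac{1}{\tau_m}\int_{I_m}g_i(t)\,dt$. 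This identifies $z_{\tau,m}=\Phi((P_\tau g_1)_m+\tilde f_m p_{\tau,m},0)$. Next, testing with $\psi=0$ and $\lambda=\lambda_m\chi_{I_m}$, $\lambda_m\in\Lzwo$, the jump and end-point contributions collapse to
\[(1+\tau_m\tilde f_m)\,p_{\tau,m}=p_{\tau,m+1}+\tau_m\beta z_{\tau,m}+\tau_m(P_\tau g_2)_m,\]
with the convention $p_{\tau,M+1}:=p_T$.

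Assuming $p_{\tau,m+1}$ has been constructed in the previous backward step, substituting the elliptic representation into the above relation produces a fixed-point equation in $\Lzwo$,
\[p_{\tau,m}=G_m(p_{\tau,m}):=(1+\tau_m\tilde f_m)^{-1}\bigl(p_{\tau,m+1}+\tau_m\beta\,\Phi((P_\tau g_1)_m+\tilde f_m p_{\tau,m},0)+\tau_m(P_\tau g_2)_m\bigr).\]
Provided $\tau\beta/\delta<1$, the multiplier $1+\tau_m\tilde f_m$ is bounded away from zero a.e.\ on $\Omega$, so its pointwise reciprocal acts as an $L^\infty$-multiplier of norm at most $(1-\tau\beta/\delta)^{-1}$. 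Combined with the Lipschitz continuity of $\Phi$, this yields a Lipschitz constant for $G_m$ of order $\tau_m\beta^2 L_\Phi/(\delta(1-\tau\beta/\delta))$, which is strictly smaller than $1$ for $\tau$ sufficiently small. Banach's fixed-point theorem then delivers a unique $p_{\tau,m}\in\Lzwo$, and hence a unique $z_{\tau,m}\in\Heinsnull$. Iterating from $m=M$ down to $m=1$ furnishes the pair $(\zt,\pt)\in\Vtau\times\Xtau$.

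The main obstacle is ensuring that a single smallness threshold on $\tau$ suffices uniformly in $m$ and independently of the data $g_1,g_2,p_T$. This is precisely where the a.e.\ bound $|f|\leq\beta/\delta$ enters: it both prevents $1+\tau_m\tilde f_m$ from vanishing and renders the contraction estimate dependent only on $\alpha,\beta,\delta,L_\Phi$ and the global mesh parameter $\tau$. Once this is secured, uniqueness follows from the contraction property and existence from the fixed point itself; the remaining verifications are routine.
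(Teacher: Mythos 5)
Your proposal is correct and follows essentially the same route as the paper: identify $z_{\tau,m}$ via the elliptic solution operator $\Phi$ in terms of $p_{\tau,m}$, reduce the dual scheme to a backward-in-time recursion on each subinterval, and apply Banach's fixed point theorem with a contraction constant of order $\tau_m$ depending only on $\beta$, $\delta$ and $L_\Phi$. The only (cosmetic) deviation is that you move the $\tilde f_m p_{\tau,m}$ term to the left and invert $(1+\tau_m\tilde f_m)$ as an $L^\infty$-multiplier, whereas the paper keeps it on the right-hand side of the fixed-point map $F_m$, yielding the contraction condition $\frac{\beta}{\delta}\tau_m(1+L_\Phi)<1$; both give the same conclusion.
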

\begin{proof}
The idea of the proof is similar to the proof of Proposition \ref{prop:existencesemidiscretestatenonlinear}. First of all, $\tilde{f}_m(x):=\int\limits_{I_m} f(t,x)dt\in L^\infty(\Omega)$ for all $m=1,\ldots,M$ and for a given $\pt\in\Xtau\subset L^\infty(\I;\Lzwo)$ the function $f\pt$ belongs to $\Lzweizwei$. Thus, $P_\tau(f\pt)_{\vert I_m}\in\Lzwo$ is well defined. It is easy to check that the unique solution of the PDE is given as $\zt=\sum\limits_{m=1}^M z_{\tau ,m}\chi_{I_m}(t)$ with $z_{\tau ,m}=\Phi(P_\tau g_{1\vert I_m},\frac{1}{\beta}P_\tau(f\pt)_{\vert I_m})\in\Hzwei\cap\Heinsnull$. The existence of a unique $\pt\in\Xtau$ may now be concluded by applying Banach's fixed point theorem to the reduced fixed point equation
\begin{equation}
p_{\tau,m}=p_{\tau,m+1}-p_{\tau,m}\tilde{f}_m+\beta\tau_m\Phi(P_\tau g_{1\vert I_m},\frac{1}{\beta}P_\tau(f\pt)_{\vert I_m})+\int\limits_{I_m} g_2(t)\,dt=: F_m(p_{\tau ,m}).
\end{equation} 
$F_m:\Lzwo\to\Lzwo$ is a contraction if $\frac{\beta}{\delta}\tau_m(1+L_\Phi)<1$.
\end{proof}
Stability estimates may now be proven with similar arguments as for the primal states:
\begin{corollary}\label{cor:stabestimatesdualnonlinear}
For the solution $(\zt,\pt)\in\Vtau\times\Xtau$ of the semidiscrete dual equation \eqref{eq:semidiscretedualstatenonlinear}, the stability estimates
\begin{equation} \label{eq:stabestimatedualnonlinearinftyzwo}
\Vert \pt\Vert_{L^\infty(\I;\Lzwo)}\leq C\{\Vert p_T\Vert + \Vert g_1\Vert_{\ito}+\Vert g_2\Vert_{\ito}\}
\end{equation}
\begin{equation}
\label{eq:stabestimatedualnonlineartwotwo}
\Vert\Delta \zt\Vert^2_{\ito}+\Vert\nabla\zt\Vert^2_{\ito}+\Vert\zt\Vert^2_{\ito}+\Vert\pt\Vert_{{\ito}}^2
+\sum\limits_{m=1}^M\tau_m^{-1}\Vert[\pt]_{m}\Vert^2 \leq C\{\Vert p_T\Vert^2+\Vert g_1\Vert_{\ito}^2+\Vert g_2\Vert_{\ito}^2\}
\end{equation}
hold true with a constant $C>0$ independent of $\tau$, provided that $\tau$ is small enough.  The jump term $[p_\tau]_M$ is defined as $p_T-p_{\tau,M}^-$. 
\end{corollary}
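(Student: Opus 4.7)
The proof mirrors the templates of Lemma \ref{lem:stabestimateinftyzwo} and Theorem \ref{thm:stabestimatesprimalnonlinear} run \emph{backwards} in time. The only genuinely new ingredient is the indefinite coupling term $(\lambda-\psi,f\pt)_{\ito}$, which is absorbed using the structural bound $|f|\le\beta/\delta$. I would split the proof into two steps corresponding to the two estimates.

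\textbf{Step 1 (Pointwise bound \eqref{eq:stabestimatedualnonlinearinftyzwo}).} Starting from the reduced backward fixed-point equation for $p_{\tau,m}$ written in the proof of Proposition \ref{cor: stabestimatedualtimenonlinear}, I take $\Lzwo$-norms on both sides and estimate $\|\tilde f_m p_{\tau,m}\|\le\tfrac{\beta\tau_m}{\delta}\|p_{\tau,m}\|$ and $\|P_\tau(f\pt)_{|I_m}\|\le\tfrac{\beta}{\delta}\|p_{\tau,m}\|$; combined with the Lipschitz continuity of $\Phi$ and $\Phi(0,0)=0$, this produces a recursion of the form
\[
\|p_{\tau,m}\|\le\frac{1}{1-c\tau_m}\bigl(\|p_{\tau,m+1}\|+C\|g_1\|_{L^1(I_m;\Lzwo)}+\|g_2\|_{L^1(I_m;\Lzwo)}\bigr),
\]
with $c$ depending only on $\beta,\delta,L_\Phi$. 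Backward induction from $m=M$ with $p_{\tau,M+1}:=p_T$, together with the uniform-in-$\tau$ product bound already used in \eqref{eq:abschdtmzwischenerg}, yields \eqref{eq:stabestimatedualnonlinearinftyzwo}. In particular, $\|\pt\|^2_{\ito}\le T\|\pt\|^2_{L^\infty(\I;\Lzwo)}$ is then controlled by the right-hand side of \eqref{eq:stabestimatedualnonlinearinftyzwo}.

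\textbf{Step 2 (Energy estimate \eqref{eq:stabestimatedualnonlineartwotwo}).} I test \eqref{eq:semidiscretedualstatenonlinear} with $(\psi,\lambda)=(\zt,\pt)$ and use the dual form \eqref{eq:Bdualnonlinear}. The elliptic block contributes $\alpha\|\nabla\zt\|^2_{\ito}+\beta\|\zt\|^2_{\ito}-\beta(\pt,\zt)_{\ito}$. Since $\pt$ is piecewise constant in time, $\partial_t\pt\equiv 0$ on each $I_m$, so after moving $(p_T,p_{\tau,M}^-)$ from the right the ODE block reduces to $-\sum_{m=1}^M([\pt]_m,p_{\tau,m}^-)$ where $[\pt]_M:=p_T-p_{\tau,M}^-$. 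Applying the identity $-(a-b,b)=\tfrac12\|a-b\|^2-\tfrac12\|a\|^2+\tfrac12\|b\|^2$ summand by summand yields the telescoping expression $\tfrac12\sum_{m=1}^M\|[\pt]_m\|^2+\tfrac12\|p_{\tau,1}\|^2-\tfrac12\|p_T\|^2$. The cross-term $\beta(\pt,\zt)_{\ito}$ and the coupling $(\pt-\zt,f\pt)_{\ito}$ are then handled by Young's inequality with a parameter depending on $\beta,\delta$; the $\|\zt\|^2_{\ito}$-fractions are absorbed into $\beta\|\zt\|^2_{\ito}$, and the $\|\pt\|^2_{\ito}$-fractions are moved to the right and bounded using Step 1. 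The bound on $\|\Delta\zt\|^2_{\ito}$ is then obtained slicewise from elliptic $H^2$-regularity applied to $z_{\tau,m}=\Phi(P_\tau g_{1|I_m},\tfrac{1}{\beta}P_\tau(f\pt)_{|I_m})\in\Hzwei\cap\Heinsnull$ via $\|\Delta z_{\tau,m}\|\le C(\|P_\tau g_{1|I_m}\|+\tfrac{\beta}{\delta}\|p_{\tau,m}\|)$ and summation. Finally, the $\tau_m^{-1}\|[\pt]_m\|^2$ weight is recovered from the jump block because the factor $1/\tau_m$ appears naturally when the jumps are estimated through the backward recursion of Step 1, in complete analogy to the primal argument of Theorem \ref{thm:stabestimatesprimalnonlinear}.

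\textbf{Main obstacle.} The core difficulty is that $\pt$ has no sign-definite contribution to the energy identity, in contrast to $\zt$, and $(\lambda-\psi,f\pt)_{\ito}$ couples the two unknowns with an indefinite sign. Closing the estimate therefore forces one to chain the two steps: the $L^\infty$-bound from Step 1 is used to control the $\|\pt\|^2_{\ito}$-terms produced by Young's inequality in Step 2, and this chaining is what ultimately relies on the structural hypothesis $|f|\le\beta/\delta$ and on the smallness of $\tau$.
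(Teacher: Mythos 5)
Your proposal is correct and follows exactly the route the paper intends: the paper gives no explicit proof but states that the estimates follow ``with similar arguments as for the primal states,'' i.e. the backward fixed-point recursion of Lemma \ref{lem:stabestimateinftyzwo} for the $L^\infty$-bound and the energy/testing argument of Theorem \ref{thm:stabestimatesprimalnonlinear} for the second estimate, which is precisely your Step 1 and Step 2. Your handling of the coupling term via $|f|\le\beta/\delta$, the telescoping of the jump block, and the recovery of the $\tau_m^{-1}$-weighted jump sum and of $\Vert\Delta\zt\Vert_{\ito}$ from the slicewise elliptic representation are all consistent with that template.
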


Before we state the main result regarding the temporal error, we require a property which will be referred to as Galerkin orthogonality in time, namely
\begin{equation} \label{eq:Galerkinorthotimenonlinear}
\B((\varphi-\vt,d-\dt),(\psi,\lambda))=\frac{1}{\delta}(\max(-\beta(d-\varphi)-r)-\max(-\beta(\dt-\vt)-r),\lambda)_{\ito} 
\end{equation}
holds true for all $(\psi,\lambda)\in\Vtau\times\Xtau$. With the Galerkin orthogonality at hand and a specific choice for $f$ in the dual equation, we may now prove the main result of this subsection:
\begin{theorem}
\label{thm:temporalerrornonlinearstate}
Let $l\in\Heinszwei$ and $d_0\in\Lzwo$ be fulfilled. For the errors $\evt:=\varphi-\vt$ and $\edt:=d-\dt$ between the continuous solution $(\varphi,d)\in V\times X$ of \eqref{eq:nonlinearweakformulationcont} and the dG(0) semidiscretized solution $(\vt,\dt)\in\Vtau\times\Xtau$ of \eqref{eq:semidiscreteprimalproblemnonlinear}, we have the error estimate
\[\Vert \evt\Vert_{\ito}+\Vert \edt\Vert_{\ito}\leq C\tau\{\Vert \partial_t\varphi\Vert_{\ito}+\Vert\partial_t d\Vert_{\ito}\}\]
with a constant $C>0$ independent of the temporal discretization parameter $\tau$. 
\end{theorem}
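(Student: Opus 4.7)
I would imitate the duality proof of the linear companion paper, treating the $\max$-nonlinearity by a solution-dependent mean-value linearization whose coefficient is precisely of the form admitted by the auxiliary dual equation \eqref{eq:semidiscretedualstatenonlinear}. Because $\max:\mathbb{R}\to\mathbb{R}$ is globally $1$-Lipschitz, I would select a measurable $F:\ito\to[0,1]$ with
$$
\max(-\beta(d-\varphi)-r)-\max(-\beta(\dt-\vt)-r) \;=\; -\beta\,F\,(\edt-\evt) \quad\text{a.e.\ in } \ito.
$$
Setting $f:=(\beta/\delta)F$, one has $\|f\|_{L^\infty(\ito)}\le\beta/\delta$, so $f$ is admissible in Proposition \ref{cor: stabestimatedualtimenonlinear}, and the Galerkin orthogonality \eqref{eq:Galerkinorthotimenonlinear} rewrites as $\B((\evt,\edt),(\psi,\lambda))+(f(\edt-\evt),\lambda)_{\ito}=0$ for all $(\psi,\lambda)\in\Vtau\times\Xtau$.

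\textbf{Main duality argument.} Next I would split $\evt=\etavt+\xivt$ and $\edt=\etadt+\xidt$ with $\etavt:=\varphi-P_\tau\varphi$, $\xivt:=P_\tau\varphi-\vt\in\Vtau$, and analogously for $d$. The interpolation parts obey $\|\etavt\|_{\ito}\le C\tau\|\partial_t\varphi\|_{\ito}$ and $\|\etadt\|_{\ito}\le C\tau\|\partial_t d\|_{\ito}$ by standard $L^2$-projection estimates. For the discrete parts, I would solve the dual problem \eqref{eq:semidiscretedualstatenonlinear} with this $f$ and data $g_1:=\xivt$, $g_2:=\xidt$, $p_T:=0$, obtaining $(\zt,\pt)\in\Vtau\times\Xtau$, and then test with $(\psi,\lambda)=(\xivt,\xidt)$ to generate $\|\xivt\|_{\ito}^2+\|\xidt\|_{\ito}^2$ on the left. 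On the right I would decompose $\B((\xivt,\xidt),(\zt,\pt))=-\B((\etavt,\etadt),(\zt,\pt))+\B((\evt,\edt),(\zt,\pt))$ and use the modified orthogonality to replace $\B((\evt,\edt),(\zt,\pt))$ by $-(f(\edt-\evt),\pt)_{\ito}$. Combined with the dual's $(\xidt-\xivt,f\pt)_{\ito}$ term this cancels all nonlinear-$f$-contributions up to an $\eta$-remainder. The $L^2$-projection identities $(\etavt,\zt)_{\ito}=(\etadt,\zt)_{\ito}=(\nabla\etavt,\nabla\zt)_{\ito}=0$ for $\zt\in\Vtau$ then erase the PDE pieces of $\B((\etavt,\etadt),(\zt,\pt))$, leaving only ODE-type remainders pairing $\etadt$ (or $\partial_t\etadt$) against $\pt$ or the jumps $[\pt]_m$ after summation by parts in time.

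\textbf{Closing and main obstacle.} Cauchy--Schwarz applied to the remaining terms, together with the dual stability of Corollary \ref{cor:stabestimatesdualnonlinear} (which bounds $\|\pt\|_{\ito}$ and $\big(\sum_m\tau_m^{-1}\|[\pt]_m\|^2\big)^{1/2}$ in terms of $\|\xivt\|_{\ito}+\|\xidt\|_{\ito}$), and Young's inequality absorb the dual norms on the left and yield $\|\xivt\|_{\ito}+\|\xidt\|_{\ito}\le C\tau\{\|\partial_t\varphi\|_{\ito}+\|\partial_t d\|_{\ito}\}$; combining with the $\eta$-bounds gives the theorem. The main obstacle is that the coefficient $f$ depends on both the continuous and semidiscrete unknowns, so the dual problem is solution-dependent; the uniform bound $|f|\le\beta/\delta$, resting on $\max$ being $1$-Lipschitz, is what makes the dual admissible. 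A secondary bookkeeping issue is that $P_\tau d$ has temporal jumps while $d$ itself does not, so the mixed jump/smoothness structure inside $\B((P_\tau\varphi,P_\tau d),(\zt,\pt))$ must be unfolded carefully via summation by parts.
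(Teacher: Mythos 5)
Your proposal follows essentially the same duality argument as the paper's proof: the same solution-dependent linearization $f$ of the $\max$-difference with $\vert f\vert\le\beta/\delta$, the same auxiliary dual problem and Galerkin orthogonality to cancel the nonlinear contributions up to $\eta$-remainders, the same $\eta/\xi$ splitting, and the same closing step via Cauchy--Schwarz and the dual stability of Corollary \ref{cor:stabestimatesdualnonlinear}. The only cosmetic deviations are that the paper takes the dual data to be the full errors $g_1=\evt$, $g_2=\edt$ rather than $(\xivt,\xidt)$, and splits the $d$-error with the nodal interpolant $I_\tau$ (so the jump terms vanish and only $-\beta(\etadt,\zt)_{\ito}$ survives), whereas you keep $P_\tau$ and absorb the resulting jump terms with $\sum_m\tau_m^{-1}\Vert[\pt]_m\Vert^2$ --- both variants close identically.
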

\begin{proof} We will prove the theorem with arguments used in \cite{NV11}, Thm. 3.3,  to show their corresponding result regarding the temporal error estimate.
Consider the dual equation \eqref{eq:semidiscretedualstatenonlinear} with $f:[\I]\times\Omega\to\mathbb{R}$ defined as
\[f(t,x)=\begin{cases}0 &, \text{ if } \edt(t,x)-\evt(t,x)=0, \\ \frac{\frac{1}{\delta}\max(-\beta(\dt(t,x)-\vt(t,x))-r)-\frac{1}{\delta}\max(-\beta(d(t,x)-\varphi(t,x))-r)}{d(t,x)-\varphi(t,x)-(\dt(t,x)-\vt(t,x))} &,\text{ else. } \end{cases}\]
The Lipschitz continuity of $\max: \mathbb{R}\to\mathbb{R}$ yields $f\in L^\infty(I\times\Omega)$ and $\vert f(t,x)\vert\leq\frac{\beta}{\delta}$. Thus, the dual equation possesses a unique solution which satisfies the stability estimates \eqref{eq:stabestimatedualnonlineartwotwo}. We will split the temporal errors 
\[\evt=\varphi-\vt=\underbrace{\varphi-P_\tau\varphi}_{=\etavt}+\underbrace{P_\tau\varphi-\vt}_{=\xivt}, \quad \edt=d-\dt=\underbrace{d-I_\tau d}_{=\etadt}+\underbrace{I_\tau d-\dt}_{=\xidt}.\]
The choice $g_1=\evt$, $g_2=\edt$ and $p_T=0$ in \eqref{eq:semidiscretedualstatenonlinear} together with the definition of $f$ and the Galerkin orthogonality \eqref{eq:Galerkinorthotimenonlinear} leads to 
\begin{align*}
\Vert\evt\Vert^2_{\ito}+\Vert\edt\Vert^2_{\ito}&=(\xivt,\evt)_{\ito}+(\xidt,\edt)_{\ito}+(\etavt,\evt)_{\ito}+(\etadt,\edt)_{\ito} \\
&=\B((\xivt,\xidt),(\zt,\pt))+(\xidt-\xivt,f\pt)_{\ito}+(\etavt,\evt)_{\ito}+(\etadt,\edt)_{\ito} \\
&=-\B((\etavt,\etadt),(\zt,\pt))-(\etadt-\etavt,f\pt)_I+(\etavt,\evt)_{\ito}+(\etadt,\edt)_{\ito} .
\end{align*}
For the first term, we have
\begin{align*}
\B((\etavt,\etadt),(\zt,\pt))&=-\beta(\etadt,\zt)_{\ito}.\\
\end{align*}
The other terms vanish due to the properties of the interpolation operator $I_\tau$, the projection operator $P_\tau$ and due to \eqref{eq:nablaPtvarphi}. The application of Cauchy-Schwarz' inequality and the stability estimates for dual equations \tr{from Corollary \ref{cor:stabestimatesdualnonlinear}} yield
\begin{align*}
\Vert\evt\Vert^2_I+\Vert\edt\Vert^2_I&\leq C(\Vert\etavt\Vert_I+\Vert\etadt\Vert_I)(\Vert\evt\Vert_I+\Vert\edt\Vert_I).
\end{align*}
From here the assertion is obtained with known error estimates for the interpolation operator $I_\tau$ and the projection operator $P_\tau$. 
\end{proof}
\subsection{Discretization in space} \label{numanaspatial}
We now turn our attention to the space-time discretization of our problem. We use $H^1$-conforming linear finite elements in space. Thus, we consider a quasi-uniform mesh $\mathbb{T}_h$ of shape regular triangles $\mathcal{T}$, which do not overlap and cover the domain $\Omega$. By $h_\mathcal{T}$ we denote the size of the triangle $\mathcal{T}$ and $h$ is the maximal triangle size. On the mesh $\mathbb{T}_h$ we construct two conforming finite element spaces 
\[\Vh=\{v\in C(\overline{\Omega}) : v_{\vert \mathcal{T}}\in \mathbb{P}_1(\mathcal{T}), \mathcal{T}\in\mathbb{T}_h, v_{\vert\partial\Omega}=0\},\]
\[\Xh=\{v\in C(\overline{\Omega}) : v_{\vert \mathcal{T}}\in \mathbb{P}_1(\mathcal{T}), \mathcal{T}\in\mathbb{T}_h\}.\]
Then the space-time discrete finite element spaces are given by
\[\Vth=\{v\in L^2(\I,\Vh) : v_{\vert I_m}\in\mathbb{P}_0(I_m;\Vh)\}\subset \Vtau,\]
\[\Xth=\{v\in L^2(\I,\Xh) : v_{\vert I_m}\in\mathbb{P}_0(I_m;\Xh)\}\subset \Xtau.\]
The space-time discrete equation then reads as follows: Find states $(\vth,\dth)\in\Vth\times\Xth$ such that
\begin{equation}
\label{eq:discretestateequationnonlinear}
\B((\vth,\dth),(\psi,\lambda))=(l,\psi)_{\ito}+\frac{1}{\delta}(\max(-\beta(\dth-\vth)-r),\lambda)_{\ito} +(d_0,\lambda^+_0)
\end{equation}
is fulfilled for all $(\psi,\lambda)\in\Vth\times\Xth$.  Note, that although we set $X=\Heinszwei$ as the state space for $d$ we choose a piecewise linear and continuous approximation in space for $d$. This is due to the fact, that we will show higher spatial regularity of $d$ later on.\\

For the projection onto $\Vth$ and $\Xth$ we work with the standard $L^2$-projections $P^V_h\colon\Lzwo\to\Vh,P^X_h\colon\Lzwo\to\Xh$ in space on each subinterval $I_m$ and define the time-space projections $\pi^V_h\colon\Vtau\to\Vth,\pi^X_h\colon\Xtau\to\Xth$ via $(\pi^V_h z)(t)=P^V_h(z(t))$ and $(\pi^X_h z)(t)=P^X_h(z(t))$ respectively. \\

We begin with unique solvability of \eqref{eq:discretestateequationnonlinear}.
\begin{theorem}
\label{thm:existencediscretestatenonlinear} Let $l\in\LzweiHminuseins$ and $d_0\in\Lzwo$ be given. Then, the discrete state equation \eqref{eq:discretestateequationnonlinear} possesses a unique solution $(\vth,\dth)\in\Vth\times\Xth$ for $\tau$ sufficiently small.
\end{theorem}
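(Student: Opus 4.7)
The plan is to follow the argument of Proposition~\ref{prop:existencesemidiscretestatenonlinear}, exploiting the fact that trial and test functions in $\Vth$ and $\Xth$ are piecewise constant in time. Testing \eqref{eq:discretestateequationnonlinear} separately against $(\chi_{I_m}\psi_h,0)$ and $(0,\chi_{I_m}\lambda_h)$ for $\psi_h\in\Vh$, $\lambda_h\in\Xh$ will decouple the discrete system into $M$ independent subproblems on the intervals $I_m$, which I then solve recursively in $m$.

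On each $I_m$ the PDE-part delivers $\vthm=\Phi_h(P_\tau l_{\vert I_m},\dthm)$, where $\Phi_h\colon\Hminuseins\times\Lzwo\to\Vh$ is the discrete counterpart of $\Phi$ defined via the Galerkin restriction of \eqref{eq:weakvarphicont} to $\Vh$. By Lax--Milgram on the conforming subspace $\Vh$, $\Phi_h$ is well-defined and Lipschitz continuous. Inserting this identity into the ODE-part and using the spatial $L^2$-projection $P^X_h\colon\Lzwo\to\Xh$ produces the fully discrete reduced fixed point equation on $\Xh$,
\[\dthm=F^h_m(\dthm):=d_{\tau h,m-1}+\frac{\tau_m}{\delta}P^X_h\max\bigl(-\beta(\dthm-\Phi_h(P_\tau l_{\vert I_m},\dthm))-r\bigr),\]
to be initialized with $d_{\tau h,0}:=P^X_h d_0$ (the $(d_0,\lambda_0^+)$-term on the right-hand side forces exactly this choice in the first step). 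I would then verify that $F^h_m$ is a contraction on $\Xh$ by combining the $L^2$-stability of $P^X_h$, the Lipschitz continuity of $\max\colon\Lzwo\to\Lzwo$ with constant $1$ from Lemma~\ref{lem:propertiesmax}, and the Lipschitz continuity of $\Phi_h$ in its second argument; the resulting smallness condition reads $\tau\tfrac{\beta}{\delta}(1+L_{\Phi_h})<1$. Banach's fixed point theorem then yields a unique $\dthm\in\Xh$, and iterating over $m=1,\ldots,M$ produces the claimed discrete states $(\vth,\dth)\in\Vth\times\Xth$.

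The main obstacle, and the one point where the argument genuinely differs from the semidiscrete case, is to ensure that the Lipschitz constant $L_{\Phi_h}$, and hence the smallness condition imposed on $\tau$, can be chosen independently of $h$. I would resolve this by observing that the bilinear form $\alpha(\nabla\cdot,\nabla\cdot)+\beta(\cdot,\cdot)$ is coercive on $\Heinsnull$ with constant $\min\{\alpha,\beta\}$, which is inherited verbatim by the conforming subspace $\Vh$; Lax--Milgram thus delivers a uniform bound $L_{\Phi_h}\leq L_\Phi$. A minor technical point is the handling of the $\Hminuseins$-valued datum $l$ at the discrete level, which is dispatched via the identity $\tfrac{1}{\tau_m}\int_{I_m}\langle l,\psi_h\rangle\,dt=\langle P_\tau l_{\vert I_m},\psi_h\rangle$ together with $P_\tau l\in L^\infty(\I;\Hminuseins)$, making every duality pairing in the fixed point map well-defined.
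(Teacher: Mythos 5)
Your proposal is correct and follows essentially the same route as the paper: decouple the system interval by interval, solve the PDE part via the discrete solution operator $\Phi_h$, and reduce to the per-interval fixed point equation for $d_{\tau h,m}$ initialized with $d_{\tau h,0}=P^X_h d_0$. The only cosmetic difference is that the paper invokes Brouwer's fixed point theorem on the (finite-dimensional) reduced equation, whereas you run Banach's contraction argument under the same smallness condition on $\tau$; your version delivers uniqueness directly, and your remark that $L_{\Phi_h}$ is bounded uniformly in $h$ (so the condition on $\tau$ is mesh-independent) is exactly the right point to flag.
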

\begin{proof}
We define the mapping $\Phi_h\colon\Hminuseins\times\Hminuseins\mapsto \Vh, \Phi_h(l,d)=\varphi_h,$ as the solution operator of the discrete version of \eqref{eq:weakvarphicont}
\[\alpha(\nabla\varphi_h,\psi)+\beta(\varphi_h,\psi)=\beta(d,\psi)+(l,\psi) \quad \forall \psi\in\Vh.\]
Then, for given $\dth\in\Xth$ and $l\in\Lzweizwei$, the function $\vth:=\sum\limits_{m=1}^M \varphi_{\tau h,m}\chi_{I_m}(t)$, $\varphi_{\tau h,m}=\Phi_h(P_\tau l_{\vert I_m},d_{\tau h,m})$, belongs to $\Vth$ and satisfies 
\[\alpha(\nabla\vth,\nabla\psi)_{\ito}+\beta(\vth,\psi)_{\ito}=(l,\psi)_{\ito}+\beta(\dth,\psi)_{\ito} \qquad \forall \psi\in\Vth.\]
Thus, it suffices to prove the existence of a unique solution $\dth\in\Xth$ of the reduced ODE 
\begin{equation}
\label{eq:reducedODEdiscretenonlinear}
\sum\limits_{m=2}^M([\dth]_{m-1},\lambda_{m-1}^+)+(d_{\tau h,1},\lambda^+_0)=\frac{1}{\delta}(\max(-\beta(\dth-\vth)-r),\lambda)_{\ito}+(d_0,\lambda^+_0) \qquad \forall \lambda\in\Xth
\end{equation}
with $\vth=\sum\limits_{m=1}^M \Phi_h(P_\tau l_{\vert I_m},d_{\tau h,m})\chi_{I_m}(t)$. The above problem is equivalent to the unique solvability of
\begin{equation} \label{eq:reducedODEdiscretenonlinearoneinterval}
(d_{\tau h,m},\lambda_m)=(d_{\tau h,m-1},\lambda_m)+\frac{\tau_m}{\delta}(\max(-\beta(d_{\tau h,m}-\Phi_h(P_\tau l_{\vert I_m},d_{\tau h,m}))-r),\lambda_m) \qquad \forall \lambda_m\in\Xh
\end{equation}
for all $m=1,\ldots,M$ with $d_{\tau h,0}=P^X_hd_0$. 
The unique solvability of this formulation can be obtained by means of Brouwer's fixed point theorem.
\end{proof}
With arguments similar to the continuous case, one obtains that the associated control-to-state operator $\Sth\colon \LzweiHminuseins\to\Vth\times\Xth$, $\Sth(l)=(\vth,\dth)$, is Lipschitz continuous with Lipschitz constant $L_S$. 

We continue with a collection of preliminary results for the error $\evh$ which solely rely on the results known for the elliptic case. 
\begin{lemma} \label{lem:preliminaryerrorvarphispatial}
Let $(\vt,\dt)\in\Vtau\times\Xtau$ be the solution of the semidiscrete state equation \eqref{eq:semidiscreteprimalproblemnonlinear} and let $(\vth,\dth)\in\Vth\times\Xth$ be the space-time discrete solution of \eqref{eq:discretestateequationnonlinear} for a given right-hand side $l\in\Lzweizwei$ and an initial state $d_0\in\Lzwo$. Then, we have the preliminary error estimates
\begin{equation}
\label{eq:preliminaryerrorvarphispatiallzwei}
\Vert\evh\Vert_{\ito}\leq C\{h^2\Vert\nabla^2\vt\Vert_{\ito}+\Vert\edh\Vert_{\ito}\}
\end{equation}
\begin{equation}
\label{eq:preliminaryerrorvarphispatialheins}
\Vert \evh\Vert_{L^2(\I;\Heins)}\leq C\{h\Vert\nabla\vt\Vert_{\ito}+\Vert\edh\Vert_{\ito}\}.
\end{equation}
for the errors $\evh=\vt-\vth$ and $\edh=\dt-\dth$ with a constant $C>0$ independent of $h$ and $\tau$. 
\end{lemma}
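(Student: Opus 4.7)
My strategy is to exploit the fact that, on each time slab $I_m$, both $\vt$ and $\vth$ are piecewise constant in time and solve an elliptic equation of the same form, but on different trial spaces and with different right-hand sides (because of $\dt$ vs.\ $\dth$). Thus, everything reduces to a family of elliptic finite element problems for which standard results apply, plus a perturbation argument to absorb the ODE error $\edh$.

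\textbf{Step 1: Pointwise-in-time elliptic identities.} From the definitions of $\vt$ and $\vth$ in Propositions~\ref{prop:existencesemidiscretestatenonlinear} and~\ref{thm:existencediscretestatenonlinear}, on each $I_m$ the value $\varphi_{\tau,m}\in\Heinsnull$ satisfies
\[
\alpha(\nabla\varphi_{\tau,m},\nabla\psi)+\beta(\varphi_{\tau,m},\psi)=(P_\tau l_{|I_m}+\beta d_{\tau,m},\psi)\qquad\forall\psi\in\Heinsnull,
\]
while $\varphi_{\tau h,m}\in\Vh$ satisfies the same equation with $d_{\tau,m}$ replaced by $d_{\tau h,m}$ and $\psi\in\Vh$. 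Subtracting and using $\Vh\subset\Heinsnull$ yields the perturbed Galerkin orthogonality
\[
\alpha(\nabla\evh,\nabla\psi)+\beta(\evh,\psi)=\beta(\edh,\psi)\qquad\forall\psi\in\Vh,
\]
valid on each $I_m$ and hence integrated over $I$.

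\textbf{Step 2: Introducing an auxiliary discrete state.} I would define $\tdth\in\Vth$ by $\tilde{\varphi}_{\tau h,m}:=\Phi_h(P_\tau l_{|I_m},d_{\tau,m})$, i.e.\ the exact discrete elliptic approximation of $\vt$ (with the semidiscrete $\dt$, \emph{not} $\dth$). Then I split
\[
\evh=(\vt-\tdth)+(\tdth-\vth)=:\eta+\xi.
\]
For $\eta$, standard error bounds for conforming linear finite elements on a quasi-uniform triangulation of a convex polygonal domain, combined with the $H^2$-regularity of $\vt$ from Proposition~\ref{prop:existencesolutionnonlinear}, give
\[
\Vert\eta\Vert_{\ito}\leq Ch^2\Vert\nabla^2\vt\Vert_{\ito}\qquad\text{and}\qquad\Vert\nabla\eta\Vert_{\ito}\leq Ch\Vert\nabla\vt\Vert_{\ito},
\]
where the $H^1$ bound follows from Cea's lemma and the interpolation properties of $\Vh$. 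Note that $\xi$ satisfies the purely discrete elliptic equation
\[
\alpha(\nabla\xi,\nabla\psi)+\beta(\xi,\psi)=\beta(\edh,\psi)\qquad\forall\psi\in\Vh.
\]

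\textbf{Step 3: Energy estimate for $\xi$.} Testing with $\psi=\xi\in\Vh$ and applying Cauchy--Schwarz and Young's inequality on each time slab, then integrating in time, gives
\[
\alpha\Vert\nabla\xi\Vert_{\ito}^2+\beta\Vert\xi\Vert_{\ito}^2\leq C\Vert\edh\Vert_{\ito}^2,
\]
so that both $\Vert\xi\Vert_{\ito}$ and $\Vert\nabla\xi\Vert_{\ito}$ are controlled by $C\Vert\edh\Vert_{\ito}$.

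\textbf{Step 4: Assembly.} Combining Steps 2 and 3 via the triangle inequality delivers the $H^1$-estimate \eqref{eq:preliminaryerrorvarphispatialheins} directly. For the $L^2$-estimate \eqref{eq:preliminaryerrorvarphispatiallzwei}, the $L^2$-bound for $\xi$ from Step 3 combines with the $O(h^2)$ bound for $\eta$ to yield the claim. I expect no substantive obstacle here: the main point is the decomposition that isolates the elliptic finite element error (for which the $H^2$-regularity of $\vt$ is the crucial input) from a purely discrete perturbation driven by $\edh$, which can be handled by the coercivity of the bilinear form on $\Vh$.
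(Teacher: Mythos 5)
Your proposal is correct and follows essentially the same route as the paper: the paper likewise splits $e^\varphi_{h,m}$ via the intermediate term $\Phi_h(P_\tau l_{\vert I_m},d_{\tau,m})$ (your $\tilde{\varphi}_{\tau h,m}$), bounds the first piece by the standard elliptic finite element error and the second by the Lipschitz continuity of $\Phi_h$, which is precisely your coercivity/energy estimate for $\xi$. One small caveat: the $H^1$-bound for $\eta$ that Cea's lemma and nodal interpolation actually deliver is $Ch\Vert\nabla^2\vt\Vert_{\ito}$ rather than $Ch\Vert\nabla\vt\Vert_{\ito}$ --- this is also what the paper's own proof derives, so the weaker right-hand side in \eqref{eq:preliminaryerrorvarphispatialheins} appears to be a slip in the statement rather than something your argument needs to (or can) produce.
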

\begin{proof}
From the proofs of existence, we know that $\varphi_{\tau,m}=\Phi(P_\tau l_{\vert I_m},d_{\tau,m})$ and $\varphi_{\tau h,m}=$\linebreak $\Phi_h(P_\tau l_{\vert I_m},d_{\tau h,m})$ hold true for all $m=1,\ldots,M$. Thus, using the Lipschitz continuity of $\Phi$ and known error estimates from the elliptic case, we may estimate 
\begin{align*}
\Vert e^\varphi_{h,m}\Vert &\leq \Vert \Phi(P_\tau l_{\vert I_m},d_{\tau ,m})-\Phi_h(P_\tau l_{\vert I_m},d_{\tau ,m})\Vert + \Vert \Phi_h(P_\tau l_{\vert I_m},d_{\tau ,m})-\Phi_h(P_\tau l_{\vert I_m},d_{\tau h,m}) \Vert \\
&\leq Ch^2\Vert\nabla^2\varphi_{\tau,m}\Vert + L_\Phi\Vert e^d_{h,m}\Vert.
\end{align*}
Similarly, if we replace the $L^2$-norm by the $H^1$-norm we get the estimate
\[\Vert e^\varphi_{h,m}\Vert_{\Heins}\leq Ch\Vert\nabla^2\varphi_{\tau,m}\Vert + L_\Phi\Vert e^d_{h,m}\Vert.\]
Squaring and integrating in time on both sides gives the desired estimates. 
\end{proof}
Based on these preliminary results for the discretization error $\evh$, there are two possibilities how to deduce error estimates for $\edh$. The first alternative is to reduce the semidiscrete and discrete ODE onto the variable $\dt$ and $\dth$ by inserting $\varphi_{\tau,m}=\Phi(P_\tau l_{\vert I_m}, d_{\tau ,m})$ and $\varphi_{\tau h,m}=\Phi_h(P_\tau l_{\vert I_m}, d_{\tau h ,m})$, respectively. For this approach, the previous results regarding stability of primal and dual solutions cannot be employed directly and have to be adjusted to the new situation. Once error estimates for the spatial error $\edh$ are derived, Lemma \ref{lem:preliminaryerrorvarphispatial} gives us the discretization error estimates also for $\evh$ in $\Heins$ and $\Lzwo$. The second alternative is to continue to work with the coupled system and derive estimates for $\evh$ and $\edh$ simultaneously. Both alternatives will result in the same order of convergence for the errors $\evh$ and $\edh$. We choose the first alternative as this approach is more general and allows for an adoption to the original damage model.\\

We define the reduced bilinear form $\bb: \Xtau\times \Xtau \to \mathbb{R}$
\begin{equation} \label{def:reducedbilinearform}
\bb(\dt,\lambda)=\sum\limits_{m=2}^M ([\dt]_{m-1},\lambda_{m-1}^+)+(d_{\tau,0}^+,\lambda_0^+).
\end{equation}
The reduced semidiscretized ODE can be formulated as
\begin{equation} \label{eq:reducedsemidiscreteODE}
\bb(\dt,\lambda)=\frac{1}{\delta}(\max(-\beta(\dt-\vt)-r),\lambda)_{\ito}+(d_0,\lambda_0^+) \qquad \forall \lambda\in\Xtau
\end{equation}
while the space-time discretized ODE is given as (compare \eqref{eq:reducedODEdiscretenonlinear})
\begin{equation}
\label{eq:reduceddiscreteODE}
\bb(\dth,\lambda)=\frac{1}{\delta}(\max(-\beta(\dth-\vth)-r),\lambda)_{\ito}+(d_0,\lambda_0^+) \qquad \forall \lambda\in\Xth.
\end{equation}
We will split the error $\edh$ as follows
\[\edh=\dt-\dth=\dt-\dthtilde + \dthtilde -\dth\]
where $\dthtilde\in\Xth$ solves the auxiliary problem
\begin{equation}
\label{eq:auxiliaryproblemspatialerrord}
\bb(\dthtilde,\lambda)=\frac{1}{\delta}(\max(-\beta(\dthtilde-\vt)-r),\lambda)_{\ito}+(d_0,\lambda_0^+) \qquad \forall \lambda\in\Xth.
\end{equation}
The unique existence of $\dthtilde\in\Xth$ can be proven along the lines of the proof of Theorem \ref{thm:existencediscretestatenonlinear}. We start with an estimate for the second term.
\begin{lemma} \label{lem:firstparterrordth}
Let $\dth\in\Xth$ be the solution of \eqref{eq:reduceddiscreteODE} and let $\dthtilde\in\Xth$ be the solution of \eqref{eq:auxiliaryproblemspatialerrord} for a given right-hand side $l\in\Lzweizwei$ and an initial state $d_0\in\Lzwo$. Then, we have the error estimate
\begin{equation}
\label{eq:firsterrorestimatedth}
\Vert \dthtilde-\dth\Vert_{L^\infty(\I;\Lzwo)}\leq C\{h^2\Vert\nabla^2\vt\Vert_{L^1(\I;\Lzwo)}+\Vert \dt-\dthtilde\Vert_{\Leinszwei}\}
\end{equation}
with a constant $C>0$ independent of $\tau$ and $h$. 
\end{lemma}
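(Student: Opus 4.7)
The plan is to set $e := \dthtilde - \dth \in \Xth$ and subtract the two reduced ODE formulations \eqref{eq:reduceddiscreteODE} and \eqref{eq:auxiliaryproblemspatialerrord}. The initial terms $(d_0,\lambda_0^+)$ cancel, and, exploiting the piecewise constant structure together with the definition of $\bb$ (with the convention $e_0 = 0$), I would obtain the backward-Euler-type identity
\[
\sum_{m=1}^M (e_m - e_{m-1},\lambda_m)
= \frac{1}{\delta}\sum_{m=1}^M \tau_m \bigl(\max(-\beta(\dthtildem - \vtm)-r) - \max(-\beta(\dthm - \vthm)-r),\,\lambda_m\bigr)
\]
for every $\lambda \in \Xth$, where $e_m$ and $\lambda_m$ denote the values on $I_m$.

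Next, for a fixed $k \in \{1,\ldots,M\}$, I would test with $\lambda_m = e_m$ for $m \leq k$ and $\lambda_m = 0$ otherwise. Using the elementary inequality $(e_m - e_{m-1},e_m) \geq \|e_m\|(\|e_m\| - \|e_{m-1}\|)$ on the left and the Lipschitz continuity of $\max$ with constant $1$ from Lemma \ref{lem:propertiesmax} on the right, this yields
\[
\|e_m\|\bigl(\|e_m\| - \|e_{m-1}\|\bigr) \;\leq\; \frac{\beta\tau_m}{\delta}\,\|e_m\|\bigl(\|e_m\| + \|\vtm - \vthm\|\bigr),
\]
which (after dividing by $\|e_m\|$, the case $\|e_m\|=0$ being trivial) collapses the problem to a scalar recursion for $a_m := \|e_m\|$.

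The key step is then to control $\|\vtm - \vthm\|$. Since $\varphi_{\tau,m} = \Phi(P_\tau l|_{I_m},\dtm)$ and $\vthm = \Phi_h(P_\tau l|_{I_m},\dthm)$, I would argue as in the proof of Lemma \ref{lem:preliminaryerrorvarphispatial}, inserting $\Phi_h(P_\tau l|_{I_m},\dtm)$ and using the standard elliptic FE estimate together with the Lipschitz continuity of $\Phi_h$, to obtain $\|\vtm - \vthm\| \leq Ch^2\|\nabla^2\varphi_{\tau,m}\| + L_\Phi\|\dtm - \dthm\|$. Splitting $\|\dtm - \dthm\| \leq \|\dtm - \dthtildem\| + a_m$ leads to
\[
a_m\Bigl(1 - \tau_m\tfrac{\beta}{\delta}(1+L_\Phi)\Bigr) \;\leq\; a_{m-1} + C\tau_m\bigl(h^2\|\nabla^2 \varphi_{\tau,m}\| + \|\dtm - \dthtildem\|\bigr),
\]
with $a_0 = 0$.

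Finally, I would iterate this recursion and use $\prod_{j=1}^M (1 - \tau_j\tfrac{\beta}{\delta}(1+L_\Phi))^{-1} \leq C$ uniformly in $\tau$, exactly as in Lemma \ref{lem:stabestimateinftyzwo}, to conclude $\max_m a_m \leq C\bigl(h^2\|\nabla^2\vt\|_{L^1(\I;\Lzwo)} + \|\dt - \dthtilde\|_{L^1(\I;\Lzwo)}\bigr)$, which is the claim. The main obstacle I anticipate is keeping the mixed roles of $\dt$, $\dthtilde$ and $\dth$ straight when estimating $\|\vtm - \vthm\|$: the crossed pairing $\vt \leftrightarrow \dt$, $\vth \leftrightarrow \dth$ forces the intermediate function $\dthtilde$ and an extra triangle-inequality splitting, and one must verify that the resulting $a_m$-term on the right can be absorbed into the left under the smallness assumption on $\tau$.
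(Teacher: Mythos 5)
Your proposal is correct and follows essentially the same route as the paper: reduce both discrete ODEs to a single subinterval, test with the error, use the Lipschitz continuity of $\max$ and the elliptic estimate $\Vert\varphi_{\tau,m}-\vthm\Vert \leq Ch^2\Vert\nabla^2\varphi_{\tau,m}\Vert + L_\Phi\Vert d_{\tau,m}-\dthm\Vert$ from Lemma \ref{lem:preliminaryerrorvarphispatial}, split via the triangle inequality to absorb the $a_m$-contribution into the factor $1-\tau_m\frac{\beta}{\delta}(1+L_\Phi)$, and close with the discrete Gronwall-type product bound from Lemma \ref{lem:stabestimateinftyzwo}. The one point you flag as a potential obstacle (the crossed pairing forcing the extra splitting of $\Vert d_{\tau,m}-\dthm\Vert$) is handled exactly as you describe in the paper's proof.
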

\begin{proof}
A reduction of \eqref{eq:reduceddiscreteODE} and \eqref{eq:auxiliaryproblemspatialerrord} onto one subinterval yields
\begin{equation}
(\dthm,\lambda)=(d_{\tau h,m-1},\lambda)+\frac{\tau_m}{\delta}(\max(-\beta(\dthm-\vthm)-r),\lambda) \qquad \forall \lambda\in \Xh
\end{equation}
and
\begin{equation}
(\dthtildem,\lambda)=(\tilde{d}_{\tau h,m-1},\lambda)+\frac{\tau_m}{\delta}(\max(-\beta(\dthtildem-\varphi_{\tau,m})-r),\lambda) \qquad \forall \lambda\in \Xh .
\end{equation}
We subtract both equations, test with $\lambda=\dthm-\dthtildem$, apply Cauchy's inequality and the Lipschitz continuity of $\max$ to arrive at
\begin{align*}
\Vert \dthm-\dthtildem\Vert&\leq \Vert d_{\tau h,m-1}-\tilde{d}_{\tau h,m-1}\Vert + \frac{\beta}{\delta}\tau_m (\Vert\dthm-\dthtildem\Vert + \Vert\varphi_{\tau,m}-\vthm\Vert).
\end{align*}
We have seen in the proof of Lemma \ref{lem:preliminaryerrorvarphispatial} that
\[\Vert\varphi_{\tau,m}-\vthm\Vert \leq Ch^2\Vert\nabla^2\varphi_{\tau,m}\Vert +L_\Phi\Vert d_{\tau,m}-\dthm\Vert.\]
We insert this bound into to above estimation to obtain
\begin{align*}
&\Vert \dthm-\dthtildem\Vert \\
&\leq \frac{1}{1-\frac{\beta}{\delta}\tau_m(1+L_\Phi)} \left(\Vert d_{\tau h,m-1}-\tilde{d}_{\tau h,m-1}\Vert+\frac{\beta}{\delta}\tau_m(Ch^2\Vert\nabla^2\varphi_{\tau,m}\Vert + L_\Phi\Vert d_{\tau,m}-\dthtildem\Vert)\right). 
\end{align*}
Then, induction together with $d_{\tau h,0}=\tilde{d}_{\tau h,0}=P_hd_0$ yields
\begin{align*}
\Vert \dthm-\dthtildem\Vert\leq \sum\limits_{j=1}^m\prod\limits_{i=j}^m\frac{1}{1-\frac{\beta}{\delta}\tau_i(1+L_\Phi)}\left(Ch^2\tau_j\Vert\nabla^2\varphi_{\tau,j}\Vert+ C\tau_j\Vert d_{\tau,j}-\tilde{d}_{\tau h,j}\Vert\right).
\end{align*}
With the same arguments used in the proof of Lemma \ref{lem:stabestimateinftyzwo}, we may conclude
\[\Vert\dthm-\dthtildem\Vert \leq Ch^2\Vert\nabla^2\vt\Vert_{\Leinszwei}+C\Vert \dt-\dthtilde\Vert_{\Leinszwei}.\]
\end{proof}
For the estimation of the first term $\dt-\dthtilde$, we will make use of the following (reduced) auxiliary dual problem: Find a state $\pt\in\Xtau$ such that
\begin{equation} \label{eq:reduceddualproblem}
\bb(\lambda,\pt)+(f\pt,\lambda)_{\ito}=(\dt-\dthtilde,\lambda)_{\ito}
\end{equation}
is fulfilled for all $\lambda\in\Xtau$. The function $f$ should belong to $L^\infty(I\times\Omega)$ and $\vert f(t,x)\vert\leq\frac{\beta}{\delta}$ is assumed. The structure of the (reduced) dual problem is similar to the dual problem investigated in Proposition \ref{cor: stabestimatedualtimenonlinear} and Corollary \ref{cor:stabestimatesdualnonlinear}. Thus, existence of a unique solution $\pt\in\Xtau$ and stability estimates may be derived with the same arguments used to prove Proposition \ref{cor: stabestimatedualtimenonlinear} and Corollary \ref{cor:stabestimatesdualnonlinear}. We state the result in the following lemma:
\begin{lemma} \label{lem:existencestabilityreduceddual}
Let $f\in L^\infty(\I\times\Omega)$ with $\vert f(t,x)\vert \leq \frac{\beta}{\delta}$ be given. Then, the reduced dual problem \eqref{eq:reduceddualproblem} possesses a unique solution $\pt\in\Xtau$
provided that $\tau$ is sufficiently small. Moreover, the solution fulfills
\begin{equation}
\label{eq:stabestimatesreduceddual}
\Vert\pt\Vert^2_{\ito}+\sum\limits_{m=1}^M\tau_m^{-1}\Vert[\pt]_m\Vert^2 \leq C\Vert\dt-\dthtilde\Vert^2_{\ito}
\end{equation}
with a constant $C>0$ independent of $\tau$ and $h$. The jump term $[\pt]_M$ is defined as $-p_{\tau,M}^-$. 
\end{lemma}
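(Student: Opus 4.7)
The plan is to follow the blueprint of Proposition \ref{cor: stabestimatedualtimenonlinear} and Corollary \ref{cor:stabestimatesdualnonlinear}, adapted to the fact that no spatial PDE appears in \eqref{eq:reduceddualproblem}: everything reduces to a pointwise-in-$x$ backward recursion on the time nodes.

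\emph{Step 1 (existence and uniqueness).} Testing \eqref{eq:reduceddualproblem} with $\lambda = \chi_{I_m}\mu$ for an arbitrary $\mu\in\Lzwo$ and using that $\pt$ is piecewise constant, I collect the contributions of $\bb$ per subinterval. Setting $p_{\tau,M+1}\coloneqq 0$ (so that the convention $[\pt]_M = -p_{\tau,M}^-$ in the statement is consistent) and writing $\tilde f_m(x)\coloneqq \tau_m^{-1}\int_{I_m}f(t,x)\,dt$ as well as $\tilde g_m\coloneqq \tau_m^{-1}\int_{I_m}(\dt-\dthtilde)(t)\,dt$, the variational equation is equivalent to the pointwise (in $x$) recursion
\begin{equation*}
(1+\tau_m\tilde f_m(x))\,p_{\tau,m}(x) \;=\; p_{\tau,m+1}(x) + \tau_m\,\tilde g_m(x),\qquad m=M,M-1,\dots,1.
\end{equation*}
Since $|\tilde f_m|\leq \beta/\delta$ a.e.\ in $\Omega$, the smallness condition $\tau_m\beta/\delta<1$ ensures $1+\tau_m\tilde f_m(x)$ is bounded away from zero uniformly in $x$, so the recursion can be solved backwards from $m=M$ down to $m=1$. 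This produces a unique $p_{\tau,m}\in\Lzwo$ at every node and hence a unique $\pt\in\Xtau$.

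\emph{Step 2 ($L^2(L^2)$-stability).} Taking the $\Lzwo$-norm in the recursion and applying the triangle inequality yields $\|p_{\tau,m}\|\leq (1-\tau_m\beta/\delta)^{-1}(\|p_{\tau,m+1}\|+\tau_m\|\tilde g_m\|)$. Iterating backwards and using the same exponential-product argument as in Lemma \ref{lem:stabestimateinftyzwo} gives
\begin{equation*}
\max_{1\leq m\leq M}\|p_{\tau,m}\| \;\leq\; C\sum_{j=1}^{M}\tau_j\|\tilde g_j\| \;\leq\; C\|\dt-\dthtilde\|_{\LL{1}{2}} \;\leq\; C\|\dt-\dthtilde\|_{\ito},
\end{equation*}
with $C$ independent of $\tau$. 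Since $\|\pt\|^2_{\ito}=\sum_m\tau_m\|p_{\tau,m}\|^2\leq T\max_m\|p_{\tau,m}\|^2$, the first term on the left of \eqref{eq:stabestimatesreduceddual} is controlled.

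\emph{Step 3 (weighted jump estimate).} The crucial observation is that the recursion can be rewritten as
\begin{equation*}
[\pt]_m \;=\; p_{\tau,m+1}-p_{\tau,m} \;=\; -\tau_m\bigl(\tilde g_m-\tilde f_m p_{\tau,m}\bigr),
\end{equation*}
so that the jumps pick up a factor $\tau_m$. Squaring, dividing by $\tau_m$, and using $|\tilde f_m|\leq\beta/\delta$ together with Jensen's inequality $\tau_m\|\tilde g_m\|^2\leq \|\dt-\dthtilde\|^2_{\imto}$ yields
\begin{equation*}
\sum_{m=1}^{M}\tau_m^{-1}\|[\pt]_m\|^2 \;\leq\; 2\sum_{m=1}^{M}\tau_m\|\tilde g_m\|^2 + 2\tfrac{\beta^2}{\delta^2}\|\pt\|^2_{\ito} \;\leq\; C\|\dt-\dthtilde\|^2_{\ito},
\end{equation*}
where the last step combines Step 2 with the Jensen bound on the $\tilde g_m$.

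The only genuine subtlety is in Step 3: one has to notice that the $\tau_m^{-1}$ weighting is exactly compensated by the $\tau_m^2$ factor the jumps acquire from the reduced equation — this is precisely what replaces the energy identity used for $\bb(\pt,\pt)$ in the full (non-reduced) dual setting and is why the proof goes through under the same smallness assumption as before.
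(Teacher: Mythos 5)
Your proof is correct and follows essentially the route the paper intends (the paper only sketches it by pointing to Proposition \ref{cor: stabestimatedualtimenonlinear}, Corollary \ref{cor:stabestimatesdualnonlinear} and Lemma \ref{lem:stabestimateinftyzwo}): reduce to a backward nodal recursion under the smallness condition $\tau_m\beta/\delta<1$, run the induction with the exponential product bound, and extract the $\tau_m^{-1}$-weighted jump bound from the equation itself. The only cosmetic difference is that you solve the recursion directly by inverting $1+\tau_m\tilde f_m$ rather than invoking Banach's fixed point theorem as in Proposition \ref{cor: stabestimatedualtimenonlinear}, which is legitimate here because the reduced dual problem is linear in $\pt$.
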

We will further split the error $\dt-\dthtilde$ as 
\[\dt-\dthtilde=\underbrace{\dt-\pi^X_h \dt}_{=\etadh}+\underbrace{\pi^X_h \dt-\dthtilde}_{=\xidh}.\]
We need two auxiliary results for the error estimation: The first one is Galerkin orthogonality in space
\begin{equation}
\label{eq:Galerkinorthospacereduced}
\bb(\dt-\dthtilde,\lambda)=\frac{1}{\delta}(\max(-\beta(\dt-\vt)-r)-\max(-\beta(\dthtilde-\vt)-r),\lambda)_{\ito} \qquad \forall \lambda\in\Xh.
\end{equation}
The second one is boundedness of the approximation error $\xidh$:
\begin{lemma} \label{lem:approximationerrorxidhestimate}
For a sufficiently small $\tau$ we have
\begin{equation}
\label{eq:approximationerrorxidhestimateeq}
\Vert\xidh\Vert_{L^\infty\I;\Lzwo)}\leq C\Vert\etadh\Vert_{\Leinszwei}
\end{equation}
with a constant $C>0$ independent of $\tau$ and $h$. 
\end{lemma}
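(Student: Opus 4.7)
The plan is to derive a pointwise-in-time equation for $\xidh|_{I_m} = \pi^X_h d_{\tau,m} - \dthtildem$ on each subinterval and then run a discrete Gronwall argument of the same flavor as the one in Lemma \ref{lem:stabestimateinftyzwo}. The key observation is that the $L^2$-projection $\pi^X_h$ satisfies $(\pi^X_h d_{\tau,m},\lambda)=(d_{\tau,m},\lambda)$ for every $\lambda\in\Xh$, so testing the reduced semidiscrete equation \eqref{eq:reducedsemidiscreteODE} on $I_m$ only with functions in $\Xh\subset L^2(\Omega)$ yields
\begin{equation*}
(\pi^X_h d_{\tau,m},\lambda)=(\pi^X_h d_{\tau,m-1},\lambda)+\frac{\tau_m}{\delta}\bigl(\max(-\beta(d_{\tau,m}-\vtm)-r),\lambda\bigr)\quad\forall\lambda\in\Xh,
\end{equation*}
where for $m=1$ the initial term $\pi^X_h d_{\tau,0}=P^X_h d_0$ appears. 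The corresponding equation for $\dthtildem$ has the very same structure with $d_{\tau,m}$ replaced by $\dthtildem$ under the $\max$ and with the identical initial value $\tilde d_{\tau h,0}=P^X_h d_0$, so $\xi^d_{h,0}=0$.

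Next I would subtract the two equations, test with $\lambda=\xi^d_{h,m}\in\Xh$, and invoke the Lipschitz continuity of $\max\colon\Lzwo\to\Lzwo$ with constant $1$ from Lemma \ref{lem:propertiesmax}(i). Since the semidiscrete $\vt$ cancels in the difference of the $\max$ terms, the Lipschitz estimate produces only $\beta\|d_{\tau,m}-\dthtildem\|=\beta\|\etadhm+\xidhm\|$ on the right. Applying Cauchy's inequality and dividing by $\|\xidhm\|$ gives the recursion
\begin{equation*}
\Bigl(1-\tfrac{\tau_m\beta}{\delta}\Bigr)\|\xidhm\|\leq \|\xi^d_{h,m-1}\|+\tfrac{\tau_m\beta}{\delta}\|\etadhm\|,
\end{equation*}
which is valid as soon as $\tau$ is small enough that $\tau_m\beta/\delta<1$ uniformly in $m$.

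Finally I would iterate the recursion. An induction starting from $\xi^d_{h,0}=0$ yields
\begin{equation*}
\|\xidhm\|\leq \sum_{j=1}^m\prod_{i=j}^m\frac{1}{1-\tfrac{\tau_i\beta}{\delta}}\,\tfrac{\tau_j\beta}{\delta}\,\|\etadhj\|,
\end{equation*}
and the product is bounded uniformly in $\tau$ by the same exponential majorization used in the proof of Lemma \ref{lem:stabestimateinftyzwo}. Thus
\begin{equation*}
\|\xidhm\|\leq C\sum_{j=1}^m \tau_j\|\etadhj\| = C\|\etadh\|_{\Leinszwei}
\end{equation*}
with $C$ independent of $\tau$ and $h$. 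Taking the maximum over $m$ yields the claim.

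The only delicate point is the absorption step: since $\dt-\dthtilde=\etadh+\xidh$, the Lipschitz estimate on $\max$ naturally produces a term proportional to $\|\xidhm\|$ on the right-hand side, and I need to be able to move it to the left. This is precisely why the lemma is stated for \emph{sufficiently small} $\tau$, in line with the smallness conditions used throughout this section; aside from this, the argument is a straightforward adaptation of the discrete Gronwall estimate already carried out in Lemma \ref{lem:stabestimateinftyzwo}.
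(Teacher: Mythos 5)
Your proposal is correct and follows essentially the same route as the paper: exploit $(\pi^X_h d_{\tau,m},\lambda)=(d_{\tau,m},\lambda)$ for $\lambda\in\Xh$ to write a recursion for $\xidhm$, test with $\xidhm$, use the Lipschitz continuity of $\max$ (noting that $\vtm$ cancels), absorb the $\Vert\xidhm\Vert$ term for small $\tau$, and conclude by the same induction and uniform product bound as in Lemma \ref{lem:stabestimateinftyzwo}. No gaps.
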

\begin{proof}
The idea of the proof is similar to the one of Lemma \ref{lem:firstparterrordth}. 
If we test $\xidhm$ with a test function $\lambda\in\Xh$, we have
\begin{align*}
(\xidhm,\lambda)&=(\xi^d_{h,m-1},\lambda)+\frac{\tau_m}{\delta}(\max(-\beta(d_{\tau,m}-\varphi_{\tau,m}-r)-\max(-\beta(\dthtildem-\varphi_{\tau,m})-r),\lambda)
\end{align*}
due to the definition of the projection $\pi_h$. We choose $\lambda=\xidhm$ and use Cauchy's inequality to obtain
\begin{align*}
\Vert\xidhm\Vert&\leq \Vert \xi^d_{h,m-1}\Vert+\frac{\beta}{\delta}\tau_m(\Vert\xidhm\Vert+\Vert\etadhm\Vert).
\end{align*}
Again, as $\xi^d_{h,0}=0$, induction leads to
\[\Vert \xidhm\Vert \leq \sum\limits_{j=1}^m\prod\limits_{i=j}^m\frac{1}{1-\frac{\beta}{\delta}\tau_i}\frac{\beta}{\delta}\tau_j\Vert\eta^d_{h,j}\Vert \leq C\Vert\etadh\Vert_{\Leinszwei}.\]
Note, that $1-\frac{\beta}{\delta}\tau_i>0$ for all $i=1,\ldots,M$ follows directly from $1-\frac{\beta}{\delta}\tau_i(1+L_\Phi)>0$ for all $i=1,\ldots,M$. 
\end{proof}

Before we proceed to the main result regarding the spatial error, we need to take a look at the spatial regularity of $\dt$:
\begin{theorem}
\label{thm:spatialregularityd}
Let $(\vt,\dt)\in\Vtau\times\Xtau$ be the solution of the semidiscrete state equation \eqref{eq:semidiscreteprimalproblemnonlinear}. Then, 
\[\dt\in L^\infty(\I;\Hs)\]
with $0\leq s<\frac{3}{2}$, provided that $d_0\in\Hs$. 
\end{theorem}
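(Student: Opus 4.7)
The plan is to proceed by induction over the time steps $m=0,1,\ldots,M$, using the fixed-point characterization
\[
d_{\tau,m} = F_m(d_{\tau,m}) := d_{\tau,m-1} + \frac{\tau_m}{\delta}\max\bigl(-\beta(d_{\tau,m}-\Phi(P_\tau l|_{I_m},d_{\tau,m}))-r\bigr)
\]
established in the proof of Proposition \ref{prop:existencesemidiscretestatenonlinear}. The base case $m=0$ is covered by the assumption $d_0\in\Hs$. For the inductive step I would assume $d_{\tau,m-1}\in\Hs$ and start the Picard iteration $d^0:=d_{\tau,m-1}$, $d^{k+1}:=F_m(d^k)$, which we already know converges to $d_{\tau,m}$ strongly in $\Lzwo$. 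The aim is to prove that the iterates are simultaneously bounded in $\Hs$, so that a weak compactness argument identifies the limit as an $\Hs$-function.

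The required $\Hs$-bound rests on three ingredients: Lemma \ref{lem:propertiesmax}(iv), which supplies $\|\max(y)\|_{\Hs}\leq C\|y\|_{\Hs}$; the fact that on the bounded domain $\Omega$ the constant function $r$ lies in $\Hs$; and standard $\Hzwei$-regularity of the elliptic solution operator on the convex polygonal $\Omega$, namely $\|\Phi(l,d)\|_{\Hzwei}\leq C(\|l\|+\|d\|)$, which, since $s<3/2\leq 2$, embeds into an $\Hs$-estimate of the same form. Putting these together, one derives an inequality of the shape
\[
\|d^{k+1}\|_{\Hs} \leq \|d_{\tau,m-1}\|_{\Hs} + \frac{\tau_m C_1}{\delta}\|d^k\|_{\Hs} + \frac{\tau_m C_2}{\delta},
\]
with $C_1$ depending only on $\beta$ and the elliptic constant and $C_2$ additionally on $\|l\|$ and $\|r\|_{\Hs}$. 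For $\tau$ small enough one has $\tau_m C_1/\delta<1$, so the recursion yields a uniform bound on $\|d^k\|_{\Hs}$. By reflexivity of $\Hs$, a subsequence of $\{d^k\}$ converges weakly in $\Hs$, and uniqueness of the strong $\Lzwo$-limit forces the weak $\Hs$-limit to coincide with $d_{\tau,m}$; weak lower semicontinuity then gives $d_{\tau,m}\in\Hs$ together with a bound obtained by passing to the limit in the display above.

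Iterating the resulting recursion $\|d_{\tau,m}\|_{\Hs}\leq (1-\tau_m C_1/\delta)^{-1}(\|d_{\tau,m-1}\|_{\Hs}+\tau_m C_2/\delta)$ over $m=1,\ldots,M$ exactly as in the proof of Lemma \ref{lem:stabestimateinftyzwo} produces a bound on $\max_m \|d_{\tau,m}\|_{\Hs}$ that is independent of $\tau$, and hence $\dt\in L^\infty(I;\Hs)$. The main obstacle motivating this roundabout route is the well-known failure of the Nemytskii map $\max$ to be Lipschitz with respect to the $\Hs$-norm in the relevant range of $s$; this rules out a direct Banach fixed-point argument in $\Hs$ and forces us to combine the $\Lzwo$-contractivity of $F_m$ (for convergence) with the $\Hs$-boundedness supplied by Lemma \ref{lem:propertiesmax}(iv) (for regularity) via weak compactness.
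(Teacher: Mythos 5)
Your argument is correct, but it takes a genuinely different route from the paper's. The paper resolves the implicit $\max$-update explicitly: a case distinction shows that the sign of $-\beta(\dtm(x)-\vtm(x))-r$ coincides with the sign of $\omega(x):=-\beta(d_{\tau,m-1}(x)-\vtm(x))-r$, a quantity built only from data already known to lie in $\Hs$, which yields the closed-form update
\[
\dtm=\vtm-\tfrac{1}{\beta}r-\tfrac{1}{\beta}\omega+\tfrac{1}{\beta}\max(\omega)\Bigl(1-\tfrac{1}{1+\frac{\beta}{\delta}\tau_m}\Bigr);
\]
a single application of Lemma \ref{lem:propertiesmax}(iv) to the known function $\omega$ then gives $\dtm\in\Hs$ with no compactness argument. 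You instead keep the equation implicit and combine the $\Lzwo$-contractivity of $F_m$ (which drives the Picard iterates to $\dtm$, by Proposition \ref{prop:existencesemidiscretestatenonlinear}) with the $\Hs$-boundedness of $\max$ to get a uniform $\Hs$-bound on the iterates, and pass to the limit by weak compactness and weak lower semicontinuity. This is sound: your recursion for $\Vert d^{k+1}\Vert_{\Hs}$ holds because $\Phi(P_\tau l_{\vert I_m},d^k)\in\Hzwei\cap\Heinsnull\hookrightarrow\Hs$ on the convex domain and constants belong to $\Hs$, the induction giving $\sup_k\Vert d^k\Vert_{\Hs}\leq R$ closes for $\tau_mC_1/\delta<1$, and identifying the weak $\Hs$-limit with the strong $\Lzwo$-limit is immediate since $\Lzwo\hookrightarrow(\Hs)^*$. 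You also correctly diagnose why a direct Banach argument in $\Hs$ fails ($\max$ is only bounded, not Lipschitz, there). What each approach buys: yours is more generic — it uses only $\Lzwo$-Lipschitz continuity plus $\Hs$-boundedness of the nonlinearity, not the monotone structure that lets one solve for $\dtm$ pointwise — and it establishes membership and the quantitative bound of Lemma \ref{lem:boundednessdtinHs} in one stroke, avoiding the need to know $\dtm\in\Hs$ before taking $\Hs$-norms of the fixed-point identity. The paper's argument is shorter, compactness-free, and exposes the structural fact that the active set at step $m$ is already determined by the step-$(m-1)$ data. The one point worth stating explicitly in your version is that the constant in Lemma \ref{lem:propertiesmax}(iv), and hence your smallness threshold on $\tau$, depends on $s$ and degenerates as $s\to\frac{3}{2}$; the paper's Lemma \ref{lem:boundednessdtinHs} shares this feature, so it is not a defect.
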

\begin{proof}
We have already proven the existence of a unique solution $\dt\in L^\infty(\I;\Lzwo)$ of the semidiscrete state equation \eqref{eq:semidiscreteprimalproblemnonlinear}. Thus, we only address the improved spatial regularity of $\dt$. We verify that $d_{\tau,m}\in\Hs$ for all $m=0,\ldots,M$. This will be achieved by induction. \\
For $m=0$, $d_{\tau,0}=d_0\in\Hs$ holds true by assumption. Thus, let  $d_{\tau,0},\ldots,d_{\tau,m-1}\in\Hs$. Then, $d_{\tau,m}\in\Lzwo$ solves \eqref{eq:semidiscretefixpointeqnonlinearoneinterval}
\[(d_{\tau,m},\lambda_m)=(d_{\tau,m-1},\lambda_m)+(\frac{1}{\delta}\max(-\beta(d_{\tau,m}-\varphi_{\tau,m})-r),\lambda_m)_{\imto}
\]
for all $\lambda_m\in\Lzwo$. We may rewrite this equation as
\[d_{\tau,m}(x)=d_{\tau,m-1}(x)+\frac{\tau_m}{\delta}\max(-\beta(d_{\tau,m}(x)-\varphi_{\tau,m}(x))-r) \]
which is fulfilled almost everywhere in $\Omega$. We will now express the $\max$ operator explicitly, that is, we distinguish between three cases:
\begin{enumerate}
\item Let $x\in \Omega^-_m:=\{x\in\Omega:-\beta(d_{\tau,m}(x)-\varphi_{\tau,m}(x))-r<0\}$. Then, the $\max$-operator equals zero and we have
\[d_{\tau,m}(x)=d_{\tau,m-1}(x).\]
\item Let $x\in \Omega^0_m:=\{x\in\Omega:-\beta(d_{\tau,m}(x)-\varphi_{\tau,m}(x))-r=0\}$. Then, the $\max$-operator vanishes as well and we have
\[d_{\tau,m}(x)=d_{\tau,m-1}(x)=\varphi_{\tau,m}(x)-\frac{r}{\beta}.\]
\item Let $x\in \Omega^+_m:=\{x\in\Omega:-\beta(d_{\tau,m}(x)-\varphi_{\tau,m}(x))-r>0\}$. In this case, we may express $d_{\tau,m}$ as
\begin{equation} \label{eq:updateformuladtmthirdcase}
d_{\tau,m}(x)=\frac{1}{1+\frac{\beta}{\delta}\tau_m}(d_{\tau,m-1}(x)+\frac{\beta}{\delta}\tau_m\varphi_{\tau,m}(x)-\frac{\tau_m}{\delta}r).
\end{equation}
\end{enumerate}
As the sets $\Omega^-_m,\Omega^0_m$ and $\Omega^+_m$ still depend on $d_{\tau,m}$, we have a look at the expression $-\beta(d_{\tau,m}(x)-\varphi_{\tau,m}(x))-r$ next. In the first two cases we may conclude, that if $-\beta(d_{\tau,m}(x)-\varphi_{\tau,m}(x))-r\leq 0$ then $-\beta(d_{\tau,m-1}(x)-\varphi_{\tau,m}(x))-r\leq 0$ has to hold true as well. In the third case, the insertion of \eqref{eq:updateformuladtmthirdcase} yields 
\[-\beta(d_{\tau,m}(x)-\varphi_{\tau,m}(x))-r=\frac{1}{1+\frac{\beta}{\delta}\tau_m}(-\beta(d_{\tau,m-1}(x)-\varphi_{\tau,m}(x))-r).\]
Thus, $-\beta(d_{\tau,m-1}(x)-\varphi_{\tau,m}(x))-r>0$ has to hold true as $\frac{1}{1+\frac{\beta}{\delta}\tau_m}$ is positive. The combination of these results shows that
\[-\beta(d_{\tau,m}(x)-\varphi_{\tau,m}(x))-r \leq 0 \Leftrightarrow -\beta(d_{\tau,m-1}(x)-\varphi_{\tau,m}(x))-r \leq 0 \]
and 
\[-\beta(d_{\tau,m}(x)-\varphi_{\tau,m}(x))-r > 0 \Leftrightarrow -\beta(d_{\tau,m-1}(x)-\varphi_{\tau,m}(x))-r  >0\]
by a contra-position argument. We may equivalently express $d_{\tau,m}(x)$ as
\[d_{\tau,m}(x)=\begin{cases} d_{\tau,m-1}(x) &, \omega(x)\leq 0, \\ \frac{1}{1+\frac{\beta}{\delta}\tau_m}(d_{\tau,m-1}(x)+\frac{\beta}{\delta}\tau_m\varphi_{\tau,m}(x))-\frac{\tau_m}{\delta}r)  &, \omega(x)>0, \end{cases}\]
with $\omega(x):=-\beta(d_{\tau,m-1}(x)-\varphi_{\tau,m}(x))-r$. \\
Next, we use $d_{\tau,m-1}(x)=\varphi_{\tau,m}(x)-\frac{1}{\beta}r-\frac{1}{\beta}\omega(x)$ to rewrite $d_{\tau,m}(x)$ again as
\begin{align*}
d_{\tau,m}(x)&=\begin{cases} \varphi_{\tau,m}(x)-\frac{1}{\beta}r-\frac{1}{\beta}\omega(x) &, \max(\omega(x)) = 0, \\ \varphi_{\tau,m}(x)-\frac{1}{\beta}r-\frac{1}{\beta}\frac{1}{1+\frac{\beta}{\delta}\tau_m}\omega(x)  &, \max(\omega(x))=\omega(x). \end{cases} \\
&=\varphi_{\tau,m}(x)-\frac{1}{\beta}r-\frac{1}{\beta}\omega(x)+\frac{1}{\beta}\max(\omega(x))\left(1-\frac{1}{1+\frac{\beta}{\delta}\tau_m}\right).
\end{align*}
Since $\varphi_{\tau,m}\in\Hzwei\hookrightarrow \Hs$ for $d_{\tau,m}\in\Lzwo$ and $s\leq 2$, $\omega\in\Hs$ due to the assumption $d_{\tau,m-1}\in\Hs$ and $\max(\omega)\in\Hs$ if and only if $0\leq s<\frac{3}{2}$ due to Lemma \ref{lem:propertiesmax}, we conclude $d_{\tau,m}\in\Hs$ for $0\leq s<\frac{3}{2}$ . 
\end{proof}
\begin{remark}
In general, $s=\frac{3}{2}-\epsilon$ is the maximal order of spatial differentiability for $\dt$. The restriction to $s<\frac{3}{2}$ goes back to the properties of the $\max$-operator as Runst and Sickel \cite{RS96} provided a counterexample that $\max(f)\not\in\Hs$ for $f\in\Hs$ and $s\geq\frac{3}{2}$. Therefore, the regularity is higher only in special cases, for example if $\omega(x)<0$ or $\omega(x)>0$ for all $x\in\Omega$ or if $\omega$ is sufficiently smooth in all $x\in\Omega$ with $\omega(x)=0$. 
\end{remark}
\begin{lemma} \label{lem:boundednessdtinHs}
For the solution of the semidiscrete state equation \eqref{eq:semidiscreteprimalproblemnonlinear} for a given right-hand side $l\in\Lzweizwei$ and a given initial state $d_0\in\Hs$, we have the boundedness 
\begin{equation}\label{eq:boundednessdtinHs}
\Vert \dt\Vert_{L^\infty(\I;\Hs)}\leq C\{\Vert d_0\Vert_{\Hs}+\Vert l\Vert_{L^1(\I;\Lzwo}\}
\end{equation}
with a constant $C>0$ independent of $\tau$, provided that $\tau$ is sufficiently small. 
\end{lemma}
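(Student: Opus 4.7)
The plan is to carry out an induction on $m$ using the pointwise-in-time update formula for $d_{\tau,m}$ obtained inside the proof of Theorem~\ref{thm:spatialregularityd}, and to mimic, in the stronger $\Hs$-norm, the Gronwall-type argument already used in Lemma~\ref{lem:stabestimateinftyzwo} for the $L^2$-bound. The main ingredients will be the boundedness of the Nemytskii operator $\max\colon\Hs\to\Hs$ for $0\le s<\tfrac{3}{2}$ (Lemma~\ref{lem:propertiesmax}(iv)), elliptic $\Hzwei$-regularity of $\Phi$, and the a priori $L^\infty(\I;\Lzwo)$-bound on $\dt$ from Lemma~\ref{lem:stabestimateinftyzwo}.

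Concretely, I would start from the explicit representation
\[d_{\tau,m}=d_{\tau,m-1}+\frac{\tau_m/\delta}{1+\tau_m\beta/\delta}\max(\omega_m),\qquad \omega_m:=-\beta(d_{\tau,m-1}-\varphi_{\tau,m})-r,\]
that was derived inside the proof of Theorem~\ref{thm:spatialregularityd}. Its main advantage is that the right-hand side depends only on $d_{\tau,m-1}$ and $\varphi_{\tau,m}$, not implicitly on $d_{\tau,m}$. Taking $\Hs$-norms and applying Lemma~\ref{lem:propertiesmax}(iv) together with the triangle inequality yields
\[\|d_{\tau,m}\|_{\Hs}\le \|d_{\tau,m-1}\|_{\Hs}+C\tau_m\bigl(\|d_{\tau,m-1}\|_{\Hs}+\|\varphi_{\tau,m}\|_{\Hs}+\|r\|_{\Hs}\bigr),\]
where $r$, viewed as a constant function on the bounded domain $\Omega$, belongs to every $\Hs$, so $\|r\|_{\Hs}$ is a fixed number that can eventually be absorbed into the generic constant.

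Next I would control $\|\varphi_{\tau,m}\|_{\Hs}$ by the elliptic regularity bound
\[\|\varphi_{\tau,m}\|_{\Hs}\le C\|\varphi_{\tau,m}\|_{\Hzwei}\le C\bigl(\|d_{\tau,m}\|+\|P_\tau l_{\vert I_m}\|\bigr),\]
then replace $\|d_{\tau,m}\|$ by the pre-established estimate $C(\|d_0\|+\|l\|_{L^1(\I;\Lzwo)})$ from Lemma~\ref{lem:stabestimateinftyzwo}, and use the crude projection bound $\tau_m\|P_\tau l_{\vert I_m}\|\le \|l\|_{L^1(I_m;\Lzwo)}$. Combining these pieces yields a recursion of the form
\[\|d_{\tau,m}\|_{\Hs}\le(1+C\tau_m)\|d_{\tau,m-1}\|_{\Hs}+C\tau_m\bigl(\|d_0\|+\|l\|_{L^1(\I;\Lzwo)}\bigr)+C\|l\|_{L^1(I_m;\Lzwo)}+C\tau_m.\]
A discrete Gronwall argument exactly as in Lemma~\ref{lem:stabestimateinftyzwo}, using $\prod_{j=1}^m(1+C\tau_j)\le e^{CT}$ together with $\sum_{m=1}^{M}\|l\|_{L^1(I_m;\Lzwo)}=\|l\|_{L^1(\I;\Lzwo)}$ and $\sum_{m=1}^{M}\tau_m=T$, then delivers the claim uniformly in $m$ starting from $d_{\tau,0}=d_0\in\Hs$.

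The main obstacle is making sure that no quantity blowing up as $\tau\to 0$ remains in the final bound: the piecewise constant $L^2$-projection of $l$ only satisfies $\|P_\tau l_{\vert I_m}\|\le \tau_m^{-1}\|l\|_{L^1(I_m;\Lzwo)}$, and it is crucial to observe that this $\tau_m^{-1}$ is exactly cancelled by the $\tau_m$-weight sitting in front of $\|\varphi_{\tau,m}\|_{\Hs}$ in the recursion, so that $l$ re-enters only through its $L^1(\I;\Lzwo)$-norm. Once this cancellation has been spotted, the rest is a routine adaptation of the Gronwall strategy of Lemma~\ref{lem:stabestimateinftyzwo}.
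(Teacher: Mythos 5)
Your argument is correct and follows essentially the same route as the paper: take $\Hs$-norms of the one-step update for $d_{\tau,m}$, invoke Lemma \ref{lem:propertiesmax}(iv) together with elliptic $\Hzwei$-regularity for $\varphi_{\tau,m}$, and run the discrete Gronwall induction of Lemma \ref{lem:stabestimateinftyzwo}. The only (harmless) deviations are that you start from the explicit update formula derived in Theorem \ref{thm:spatialregularityd} rather than the implicit fixed-point equation \eqref{eq:semidiscretefixpointeqnonlinearoneinterval} --- which spares you the absorption step requiring $\frac{\beta}{\delta}C\tau_m<1$ --- and that you close the bound on $\Vert\varphi_{\tau,m}\Vert_{\Hzwei}$ directly via the $L^\infty(\I;\Lzwo)$-estimate of Lemma \ref{lem:stabestimateinftyzwo}, where the paper instead stops at $\Vert\vt\Vert_{L^1([0,t_m];\Hzwei)}$ and cites Theorem \ref{thm:stabestimatesprimalnonlinear}.
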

\begin{proof}
We begin as in the proof of Lemma \ref{lem:stabestimateinftyzwo} by taking norms on both sides of \eqref{eq:semidiscretefixpointeqnonlinearoneinterval}. By making use of \eqref{eq: boundedmaxHs} we arrive at
\[\Vert \dtm\Vert_{\Hs}\leq \Vert d_{\tau,m-1}\Vert_{\Hs}+\frac{\beta}{\delta}\tau_m C(\Vert\dtm\Vert_{\Hs}+\Vert\vtm\Vert_{\Hzwei}).\]
Provided that $\frac{\beta}{\delta}C\tau_m<1$, we obtain
\[\Vert \dtm\Vert_{\Hs}\leq \frac{1}{1-\frac{\beta}{\delta}C\tau_m}\left(\Vert d_{\tau,m-1}\Vert_{\Hs}+\frac{\beta}{\delta}C\Vert\vt\Vert_{L^1(I_m;\Hzwei)}\right).\]
An argumentation similar to the proof of Lemma \ref{lem:stabestimateinftyzwo} yields
\[\Vert \dtm\Vert_{\Hs}\leq C\left(\Vert d_0\Vert_{\Hs}+ \Vert \vt\Vert_{L^1([0,t_m];\Hzwei)}\right).\]
The assertion follows from Lemma \ref{lem:stabestimateinftyzwo} and Theorem \ref{thm:stabestimatesprimalnonlinear}.
\end{proof}
We are now in position to prove our error estimates regarding the spatial error:
\begin{theorem} \label{thm:spatialerrornonlinear}
Let $l\in\Lzweizwei$ and $d_0\in\Hs, 0\leq s < \frac{3}{2}$ be given. For the errors $\evh=\vt-\vth$ and $\edh=\dt-\dth$ between the dG(0) semidiscretized solution $(\vt,\dt)\in\Vtau\times\Xtau$ of \eqref{eq:semidiscreteprimalproblemnonlinear} and the dG(0)cG(1) discretized solution $(\vth,\dth)\in\Vth\times\Xth$ of \eqref{eq:discretestateequationnonlinear}, we have the error estimate
\[\Vert\evh\Vert_{\ito}+\Vert\edh\Vert_{\ito}\leq Ch^s\{\Vert\nabla^2\vt\Vert_{\ito}+\Vert\dt\Vert_
{L^2(\I;\Hs)}\}\]
with a constant $C>0$ independent of $\tau$ and $h$.
\end{theorem}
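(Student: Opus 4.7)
The plan is to build the estimate directly from the auxiliary lemmas proved in this subsection, without invoking the reduced dual problem. The starting point is Lemma~\ref{lem:preliminaryerrorvarphispatial}, which reduces the bound on $\|\evh\|_{\ito}$ to a bound on $\|\edh\|_{\ito}$ up to a harmless term $h^2\|\nabla^2\vt\|_{\ito}$ (which is subsumed in $h^s\|\nabla^2\vt\|_{\ito}$ since $s\leq 2$ and we may assume $h\leq 1$). So I focus on estimating $\|\edh\|_{\ito}$ via the decomposition
\[
\edh = (\dt-\dthtilde) + (\dthtilde-\dth), \qquad \dt-\dthtilde = \etadh + \xidh,
\]
exactly as set up between Lemma~\ref{lem:firstparterrordth} and Lemma~\ref{lem:approximationerrorxidhestimate}.

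For the interpolation error $\etadh = \dt-\pi^X_h\dt$, the spatial regularity $\dt\in L^\infty(\I;\Hs)$ from Theorem~\ref{thm:spatialregularityd} (quantified by Lemma~\ref{lem:boundednessdtinHs}) combined with the standard $L^2$-projection error bound in space gives, pointwise in time, $\|\etadh(t)\|\leq Ch^s\|\dt(t)\|_{\Hs}$; squaring and integrating over $I$ yields $\|\etadh\|_{\ito}\leq Ch^s\|\dt\|_{L^2(\I;\Hs)}$. The discrete part $\xidh$ is then controlled by Lemma~\ref{lem:approximationerrorxidhestimate}, which states $\|\xidh\|_{L^\infty(\I;\Lzwo)}\leq C\|\etadh\|_{\Leinszwei}$; passing from $L^1$ to $L^2$ in time costs only a factor $T^{1/2}$, and from $L^\infty$ to $L^2$ in time another $T^{1/2}$, so that $\|\xidh\|_{\ito}\leq CT\|\etadh\|_{\ito}\leq Ch^s\|\dt\|_{L^2(\I;\Hs)}$. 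Adding the two contributions produces $\|\dt-\dthtilde\|_{\ito}\leq Ch^s\|\dt\|_{L^2(\I;\Hs)}$. Inserting this into Lemma~\ref{lem:firstparterrordth} and again bounding $L^1(\I;\Lzwo)$ by $T^{1/2}L^2(\I;\Lzwo)$ gives
\[
\|\dthtilde-\dth\|_{L^\infty(\I;\Lzwo)}\leq C\bigl(h^2\|\nabla^2\vt\|_{\ito}+h^s\|\dt\|_{L^2(\I;\Hs)}\bigr)\leq Ch^s\bigl(\|\nabla^2\vt\|_{\ito}+\|\dt\|_{L^2(\I;\Hs)}\bigr),
\]
and after one more $T^{1/2}$ factor the same bound holds in the $\ito$-norm. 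Combining both splits and feeding the result back into Lemma~\ref{lem:preliminaryerrorvarphispatial} delivers the announced estimate for both $\evh$ and $\edh$.

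The step I would expect to require the most care is not the computation itself but the bookkeeping of temporal norms: one has to move between $L^\infty(\I;\Lzwo)$, $\Lzweizwei$ and $\Leinszwei$ without losing a factor of $h^{s/2}$, and this is the reason Lemma~\ref{lem:approximationerrorxidhestimate} is stated in the asymmetric form $L^\infty\leftarrow L^1$ rather than $L^2\leftarrow L^2$. The decisive structural input is Theorem~\ref{thm:spatialregularityd}: the fact that the Nemytskii $\max$ preserves $\Hs$ only for $0\leq s<3/2$ is exactly what both bounds the exponent in the claim and guarantees that the factor $\|\dt\|_{L^2(\I;\Hs)}$ on the right-hand side is finite in the first place.
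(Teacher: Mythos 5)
Your argument is correct, but it takes a genuinely different route from the paper. The paper estimates the key quantity $\Vert\dt-\dthtilde\Vert_{\ito}$ by a duality argument: it introduces the reduced auxiliary dual problem \eqref{eq:reduceddualproblem} with a difference-quotient coefficient $f$ built from the $\max$-nonlinearity, tests with $\dt-\dthtilde$, invokes the Galerkin orthogonality \eqref{eq:Galerkinorthospacereduced} and the dual stability bound \eqref{eq:stabestimatesreduceddual}, and uses Lemma~\ref{lem:approximationerrorxidhestimate} only to control one of the four resulting terms, namely $(f\xidh,\etaph)_{\ito}$. You instead observe that Lemma~\ref{lem:approximationerrorxidhestimate} already controls $\xidh$ outright, so the triangle inequality $\Vert\dt-\dthtilde\Vert_{\ito}\leq\Vert\etadh\Vert_{\ito}+\Vert\xidh\Vert_{\ito}$ closes the estimate with no dual problem at all; the norm bookkeeping you flag ($L^\infty$, $L^2$, $L^1$ in time) only costs fixed powers of $T$, so no rate is lost, and both routes land on $\Vert\dt-\dthtilde\Vert_{\ito}\leq C\Vert\etadh\Vert_{\ito}\leq Ch^s\Vert\dt\Vert_{L^2(\I;\Hs)}$. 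Your shortcut is available precisely because the second equation is an ODE with no spatial operator, so the error equation for $\xidh$ can be closed interval-by-interval with a discrete Gronwall argument — which is exactly what the proof of Lemma~\ref{lem:approximationerrorxidhestimate} does. What the paper's duality machinery buys is robustness: it is the argument one would need to estimate the error in weaker norms such as $\LzweiHminuseins$ (which the authors point out could improve the rate for $\evh$), or to treat the full damage model where the evolution equation carries spatial coupling and a direct Gronwall bound would not close. As a final remark, your identification of Theorem~\ref{thm:spatialregularityd} (the $s<\tfrac{3}{2}$ barrier for the Nemytskii $\max$) as the decisive structural input matches the paper exactly.
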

\begin{proof} We only need to derive an estimate for $\dt-\dthtilde$ as all other terms have already been estimated. We consider the dual equation \eqref{eq:reduceddualproblem} with $f\in L^\infty(I\times\Omega)$ chosen as
\[f(t,x)=\begin{cases} \frac{\max(-\beta(\dthtilde(t,x)-\vt(t,x))-r)-\max(-\beta(\dt(t,x)-\vt(t,x))-r)}{\delta(\dt(t,x)-\dthtilde(t,x))} &,\text{ if } \dt(t,x)\not =\dthtilde(t,x) \\ 0 &, \text{ else. } \end{cases}\]
As $\max:\mathbb{R}\to\mathbb{R}$ is Lipschitz continuous, we have $\vert f(t,x)\vert\leq\frac{\beta}{\delta}$. We will denote the projection error for $\pt$ with $\etaph=\pt-\pi^X_h\pt$. Testing with $\lambda=\dt-\dthtilde$ yields
\begin{align*}
\Vert\dt-\dthtilde\Vert^2_{\ito}&=\bb(\dt-\dthtilde,\pt)+(f\pt,\dt-\dthtilde)_{\ito} \\
&=\bb(\dt-\dthtilde,\pi_h^X\pt)+\bb(\dt-\dthtilde,\etaph)+(f\pt,\dt-\dthtilde)_{\ito} \\
&=-(f(\dt-\dthtilde),\pi^X_h\pt)_{\ito}+(f\pt,\dt-\dthtilde)_{\ito}+\bb(\dt-\dthtilde,\etaph) \\
&=\bb(\xidh,\etaph)+\bb(\etadh,\etaph)+(f\xidh,\etaph)_{\ito}+(f\etadh,\etaph)_{\ito} .
\end{align*}
Here, we made use of Galerkin orthogonality in space \eqref{eq:Galerkinorthospacereduced}. We estimate the last four terms separately. For the first term, we have $\bb(\xidh,\etaph)=0$ due to the definition of $\pi^X_h$. The assertion follows directly if we insert $\xidh$ and $\etaph$ in the reduced bilinear form $\bb$. For the second term, we estimate with the dual representation of the reduced bilinear form
\begin{align*}
\bb(\etadh,\etaph) &=-\sum\limits_{m=1}^M([\pt]_m,\etadhm) \leq \left(\sum\limits_{m=1}^M\tau_m\Vert\etadhm\Vert^2\right)^\frac{1}{2}\left(\sum\limits_{m=1}^M\tau_m^{-1}\Vert[\pt]_m\Vert^2\right)^\frac{1}{2} \\
&\leq C\Vert\etadh\Vert_{\ito}\Vert \dt-\dthtilde\Vert_{\ito}.
\end{align*}
The last estimate follows from the dual stability \eqref{eq:stabestimatesreduceddual}. The third term may be estimated using Lemma \ref{lem:approximationerrorxidhestimate}.
For the last term, we directly have
\[(f\etadh,\etaph)_{\ito}\leq \frac{\beta}{\delta}\Vert\etadh\Vert_{\ito}\Vert\etaph\Vert_{\ito}.\]
Combing all estimates yields
\[\Vert\dt-\dthtilde\Vert^2_{\ito}\leq C\Vert\etadh\Vert_{\ito}(\Vert\etaph\Vert_{\ito}+\Vert \dt-\dthtilde\Vert_{\ito}). \]
As $\Vert\etaph\Vert_{\ito}\leq C\Vert\pt\Vert_{\ito}\leq C\Vert\dt-\dthtilde\Vert_{\ito}$ follows from the projection error estimate and the stability estimate for $\pt$, we obtain after division by $\Vert\dt-\dthtilde\Vert_{\ito}$ 
\[\Vert\dt-\dthtilde\Vert_{\ito} \leq C\Vert\etadh\Vert_{\ito}\leq Ch^s\Vert\dt\Vert_{L^2(\I;\Hs)}.\]
Lemma \ref{lem:firstparterrordth} then yields
\[\Vert\edh\Vert_{\ito}\leq Ch^2\Vert\nabla^2\vt\Vert_{\ito}+Ch^s\Vert\dt\Vert_{L^2(\I;\Hs)}\]
and finally Lemma \ref{lem:preliminaryerrorvarphispatial} gives us
\[\Vert\evh\Vert_{\ito}\leq Ch^2\Vert\nabla^2\vt\Vert_{\ito}+Ch^s\Vert\dt\Vert_{L^2(\I;\Hs)}.\]
Due to the reduced regularity of $\dt$, we only have $s<\frac{3}{2}$ such that the error in $\dt$ is dominant and the assertion follows. 
\end{proof}
Together, both error estimates yield the overall result
\begin{theorem} \label{thm:overallerrorestimatenonlinear}
Let $l\in\Heinszwei$ and $d_0\in\Hs$, $0\leq s<\frac{3}{2}$, be given. Let $(\varphi,d)\in V\times X$ be the solution of the continuous problem \eqref{eq:nonlinearweakformulationcont} and let $(\vth,\dth)\in\Vth\times\Xth$ be the solution of the dG(0)cG(1) discretized problem \eqref{eq:discretestateequationnonlinear}. Then, we have the error estimate
\begin{equation} \label{eq:overallerrorestimatenonlinear}
\Vert \varphi-\vth\Vert_{\ito}+\Vert d-\dth\Vert_{\ito}\leq C(\tau+h^s)
\end{equation} 
with a constant $C>0$ independent of $\tau$ and $h$. 
\end{theorem}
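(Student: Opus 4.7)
My plan is to decompose the total error by inserting the semidiscrete solution $(\vt,\dt)\in\Vtau\times\Xtau$ of \eqref{eq:semidiscreteprimalproblemnonlinear} as an intermediate object and then to combine the two main results of Section \ref{NAforstateeq}, namely Theorem \ref{thm:temporalerrornonlinearstate} for the temporal error $(\varphi-\vt,d-\dt)$ and Theorem \ref{thm:spatialerrornonlinear} for the spatial error $(\vt-\vth,\dt-\dth)$. The triangle inequality applied to
\[
\varphi-\vth = (\varphi-\vt)+(\vt-\vth), \qquad d-\dth = (d-\dt)+(\dt-\dth)
\]
reduces the statement to bounding these four contributions in $\|\cdot\|_{\ito}$.

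In the first step I would invoke Theorem \ref{thm:temporalerrornonlinearstate}, whose hypotheses are met since $l\in\Heinszwei$ and $d_0\in\Hs\hookrightarrow\Lzwo$, to obtain
\[
\Vert\varphi-\vt\Vert_{\ito}+\Vert d-\dt\Vert_{\ito}\leq C\tau\{\Vert\partial_t\varphi\Vert_{\ito}+\Vert\partial_t d\Vert_{\ito}\}.
\]
The right-hand side is then bounded by data using the continuous stability estimates of Lemma \ref{lem:boundednessstatescontinuous}, which give $\Vert\partial_t\varphi\Vert_{\ito}+\Vert\partial_t d\Vert_{\ito}\leq C\{\Vert d_0\Vert+\Vert l\Vert_{\Heinszwei}\}$. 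Hence the temporal contribution is controlled by $C\tau$ with $C=C(d_0,l)$, uniformly in $\tau$ and $h$.

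In the second step I would apply Theorem \ref{thm:spatialerrornonlinear}, which requires $l\in\Lzweizwei$ (trivially satisfied) and $d_0\in\Hs$, to obtain
\[
\Vert\vt-\vth\Vert_{\ito}+\Vert\dt-\dth\Vert_{\ito}\leq Ch^s\{\Vert\nabla^2\vt\Vert_{\ito}+\Vert\dt\Vert_{L^2(\I;\Hs)}\}.
\]
To make this bound data-dependent I would use the semidiscrete stability estimate of Theorem \ref{thm:stabestimatesprimalnonlinear} for $\Vert\nabla^2\vt\Vert_{\ito}$ and the improved spatial regularity result of Lemma \ref{lem:boundednessdtinHs} for $\Vert\dt\Vert_{L^\infty(\I;\Hs)}\subset L^2(\I;\Hs)$, both of which are independent of $\tau$ provided $\tau$ is small enough. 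This yields a bound of the form $Ch^s$ with $C=C(d_0,l)$.

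Combining the two estimates through the triangle inequality and absorbing all data-dependent constants into a single $C>0$ gives \eqref{eq:overallerrorestimatenonlinear}. I do not anticipate a genuine obstacle here since the heavy lifting has already been done in Theorems \ref{thm:temporalerrornonlinearstate} and \ref{thm:spatialerrornonlinear}; the only minor point worth double-checking is that all smallness-of-$\tau$ conditions invoked by the intermediate lemmas (most notably $\frac{\beta}{\delta}\tau_m(1+L_\Phi)<1$) are compatible, which they are since they are all of the same form and depend only on the fixed model parameters.
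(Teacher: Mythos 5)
Your proposal is correct and follows essentially the same route as the paper: split via the semidiscrete solution, invoke Theorems \ref{thm:temporalerrornonlinearstate} and \ref{thm:spatialerrornonlinear}, and bound $\Vert\partial_t\varphi\Vert_{\ito}$, $\Vert\partial_t d\Vert_{\ito}$, $\Vert\nabla^2\vt\Vert_{\ito}$ and $\Vert\dt\Vert_{L^2(\I;\Hs)}$ by data through Lemma \ref{lem:boundednessstatescontinuous}, Theorem \ref{thm:stabestimatesprimalnonlinear} and Lemma \ref{lem:boundednessdtinHs}. Nothing is missing.
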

\begin{proof}
The assertion is a combination of the results from Theorems \ref{thm:temporalerrornonlinearstate} and \ref{thm:spatialerrornonlinear}. We only have to show the boundedness of $\Vert\partial_t\varphi\Vert_{\ito},\Vert\partial_t d\Vert_{\ito},\Vert \nabla^2\vt\Vert_{\ito}$ and $\Vert\dt\Vert_{L^2(\I;\Hs)}$ independent of $\tau$ and $h$. But this follows immediately from the stability estimates from Lemma \ref{lem:boundednessstatescontinuous}, Lemma \ref{lem:boundednessdtinHs} and Theorem \ref{thm:stabestimatesprimalnonlinear}.
\end{proof}
\section{Error estimates for the associated optimal control problem} \label{sec:errorestimatessemilinearoptimalcontrol}
This section is devoted to error estimation for the associated optimal control problem, that is, we want to measure the error between the continuous solution of the reduced optimal control problem
\[(P) \qquad \min j(l)=J(S(l),l), \quad \text{ s.t. } l\in\Heinsnullzwei \]
with $S:\LzweiHminuseins\to V\times X$, $S(l)=(\varphi,d)$, being the solution operator of problem \eqref{mod:nonlinmodbegin}-\eqref{mod:nonlinmodend} and the solution of a discrete version
\[(P_\sigma)\qquad  \min \jth(l)=\frac{1}{2}\Vert \vth-\varphi_d\Vert^2_{\ito}+\frac{1}{2}\Vert \dth-d_d\Vert^2_{\ito}+\frac{\alpha_l}{2}\Vert l\Vert^2_{L_\sigma}, \quad \text{ s.t. } l\in L_\sigma \]
with $\Sth:\LzweiHminuseins\to\Vth\times\Xth$, $\Sth(l)=(\vth,\dth)$, being the solution operator of the space-time discrete problem \eqref{eq:discretestateequationnonlinear}. The parameter $\sigma$ denotes the discretization parameters for the control in time and space. The discrete control space $L_\sigma$ will be chosen later. In particular, we will investigate a variational discretization as well as a nonconforming dG(0)cG(1) discretization of the control. For simplicity the control will be discretized with the same parameter as the state equation if it is discretized at all. Thus, we have $\sigma=(\tau,h)$ for discretized controls. Recall, that we employ the $\Heinszwei$-seminorm for the objective. The norm $\Vert\cdot\Vert_{L_\sigma}$ will be chosen appropriately. \\
  
\subsection{The continuous optimal control problem}
We begin with solvability of problem $(P)$. 
\begin{theorem} \label{thm:existenceofoptimalcontrolscontinuous}
Let $\varphi_d,d_d\in\Lzweizwei$ be two given desired states, let $d_0\in\Lzwo$ be a given initial state and let $\alpha_l>0$ hold true. Then, the optimal control problem $(P)$ admits at least one solution $\ovl\in\Heinsnullzwei$.
\end{theorem}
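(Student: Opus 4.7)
I would use the direct method of the calculus of variations. Since $J \geq 0$ and $j$ is finite at $l = 0$, the infimum $j^* := \inf_{l \in \Heinsnullzwei} j(l)$ exists and is nonnegative, and we may pick a minimizing sequence $\{l_n\} \subset \Heinsnullzwei$ with $j(l_n) \to j^*$. The control regularization term $\tfrac{\alpha_l}{2}\Vert l\Vert^2_{\Heinsnullzwei} = \tfrac{\alpha_l}{2}\Vert\partial_t l\Vert^2_{\ito}$ is coercive on $\Heinsnullzwei$ (because the seminorm is the norm on this space, as the vanishing-at-$0$ condition upgrades Poincaré-in-time), so $\{l_n\}$ is bounded in $\Heinsnullzwei$. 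Passing to a subsequence (not relabeled), I obtain $l_n \rightharpoonup \ovl$ in $\Heinsnullzwei$ for some $\ovl$; the admissibility constraint $l(0)=0$ is preserved under weak limits because $\{l \in \Heinszwei : l(0) = 0\}$ is a closed linear (hence weakly closed) subspace.

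The main obstacle is passing to the limit in the semilinear state equation: the Nemytskii operator $\max\colon\Lzweizwei\to\Lzweizwei$ is not weakly sequentially continuous, so $l_n \rightharpoonup \ovl$ in $\Heinsnullzwei$ alone does not suffice. The remedy exploits that $\Heinsnullzwei \subset \Heinszwei$ together with an Aubin--Lions type compactness argument: the chain $\Lzwo \hookrightarrow\hookrightarrow \Hminuseins$ is compact (dual of the Rellich--Kondrachov embedding $\Heinsnull \hookrightarrow\hookrightarrow \Lzwo$), so
\[
\Heinszwei \hookrightarrow\hookrightarrow \LzweiHminuseins.
\]
Hence $l_n \to \ovl$ strongly in $\LzweiHminuseins$. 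By the Lipschitz continuity of the control-to-state operator $S\colon \LzweiHminuseins \to V \times X$ established in Lemma \ref{lem:existencelzweihminuseins}, we obtain strong convergence $S(l_n) \to S(\ovl)$ in $V \times X$; in particular the corresponding state sequences $\varphi_n \to \ovv$ in $\LzweiHeins$ and $d_n \to \ovd$ in $\Heinszwei$.

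Finally I pass to the limit in the objective. The two tracking terms $\tfrac{1}{2}\Vert \varphi_n - \varphi_d\Vert^2_{\ito}$ and $\tfrac{1}{2}\Vert d_n - d_d\Vert^2_{\ito}$ converge to the corresponding terms evaluated at $(\ovv,\ovd)$ by the strong convergence of the states, while $\tfrac{\alpha_l}{2}\Vert \cdot\Vert^2_{\Heinsnullzwei}$ is convex and continuous, hence weakly lower semicontinuous, so
\[
j(\ovl) \leq \liminf_{n\to\infty} j(l_n) = j^*.
\]
Since $\ovl$ is admissible this forces $j(\ovl) = j^*$, proving $\ovl$ is a solution of $(P)$. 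The delicate point throughout is precisely the one anticipated in the introduction: the $\Heinsnullzwei$-regularization provides just enough compactness (via Aubin--Lions into $\LzweiHminuseins$) to compensate for the lack of weak continuity of the $\max$-nonlinearity, a step that would fail if the control were only regularized in $\Lzweizwei$.
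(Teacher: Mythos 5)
Your proof is correct and follows exactly the route the paper indicates: the direct method combined with the compact embedding $\Heinsnullzwei\hookrightarrow\hookrightarrow\LzweiHminuseins$ and the Lipschitz continuity of $S\colon\LzweiHminuseins\to V\times X$ from Lemma \ref{lem:existencelzweihminuseins} to circumvent the lack of weak continuity of the $\max$-nonlinearity. The paper only sketches this in one sentence; your write-up fills in the same argument in detail.
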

\begin{proof}
The assertion may be proven with standard arguments and relies on the compact embedding $\Heinsnullzwei\hookrightarrow\hookrightarrow \LzweiHminuseins$ as well as the Lipschitz continuity of $S:\LzweiHminuseins\mapsto V\times X$.
\end{proof}

The semilinear problem may possess multiple minimizers as our problem is not necessarily convex. Therefore, we are dealing with local minimizers.
\begin{definition} \label{def:localminimizer}
A control $\ovl\in\Heinsnullzwei$ is called a local solution of $(P)$ in the sense of $\Heinsnullzwei$ if there exists a constant $\rho>0$ such that
\[j(\ovl)\leq j(l)\]
is satisfied for all $l\in B_\rho(\ovl):=\{l\in\Heinsnullzwei : \Vert l-\ovl\Vert_{\Heinsnullzwei} < \rho\}$. It is called a strict local minimizer if the inequality is strict for $l\not =\ovl$. It is called a local solution of $(P)$ in the sense of $\Lzweizwei$ if the above inequality holds true for all $l\in B_\rho^I:=\{l\in\Heinsnullzwei : \Vert l-\ovl\Vert_{\ito} < \rho\}$.
\end{definition}
In the following $\rho>0$ always refers to the radius of local optimality of a local optimal control $\ovl$. To ensure that we are only dealing with strict local minimizers, we impose the following quadratic growth condition.
\begin{assumption} \label{ass:quadgrowthcond}
For a local minimizer $\ovl\in\Heinsnullzwei$ of $(P)$, there exists a constant $\gamma>0$ such that the quadratic growth condition
\begin{equation} \label{eq:quadgrowthcond}
\gamma\Vert l-\ovl\Vert^2_{\Lzweizwei}\leq j(l)-j(\ovl)
\end{equation}
is satisfied for all $l\in B_\rho(\ovl)$. 
\end{assumption}

Necessary optimality conditions are derived in detail for a slightly different problem in \cite{LB19}. The author chooses $\Lzweizwei$ as control space although existence of an optimal control for this setting is unknown (see the introduction) and not addressed in \cite{LB19}. In this contribution, strong stationarity conditions equivalent to purely primal necessary optimality conditions of the form 
\begin{equation} \label{eq:primaloptimality}
j^\prime(\ovl;\delta l)\geq 0 \qquad \forall \delta l\in\Heinsnullzwei
\end{equation}
are established. The proofs are adaptable to our case with $\Heinsnullzwei$ as control space. Therefore, we state the following strong stationarity conditions without proof. 
\begin{theorem}[see \cite{LB19}] \label{thm:optimalitysystemforP}
Let $\ovl\in\Heinsnullzwei$ be locally optimal for $(P)$ with associated states $(\ovv,\ovd)\in V\times X$. Then, there exist unique adjoint states $(\ovz,\ovp)\in V\times X$ with $p(T)=0$ and a unique multiplier $\mu\in\Lzweizwei$ which fulfill 
\begin{equation}
\label{eq:adjointeqP}
B((\psi,\lambda),(\ovz,\ovp))=\frac{\beta}{\delta}(\mu,\psi-\lambda)_{I\times\Omega}+(\ovv-\varphi_d,\psi)_{I\times\Omega}+(\ovd-d_d,\lambda)_{I\times\Omega}
\end{equation}
for all $(\psi,\lambda)\in V\times X$. Moreover, the variational equality
\begin{equation}
\label{eq:variationalinequalityP}
(\ovz,\delta l)_{\ito}+\alpha_l(\ovl,\delta l)_{\Heinsnullzwei} = 0 \qquad \forall \delta l\in\Heinsnullzwei
\end{equation}
is satisfied. Finally, 
\begin{equation} \label{eq:signconditionmu}
\mu(t,x)\begin{cases} =\chi_{\Omega^+_t}(t,x)\ovp(t,x) &,\text{  a.e. in } \Omega^+_t\cup \Omega^-_t \\
\in [0,\ovp(t,x)] &,\text{  a.e. in } \Omega^0_t
 \end{cases}
\end{equation}
with $\Omega^+_t,\Omega^-_t$ and $\Omega^0_t$ being the active, inactive and biactive set at time point $t$.
\end{theorem}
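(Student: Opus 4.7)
The plan is to follow the classical strong stationarity program for non-smooth optimal control, adapted to the PDE-ODE coupling of $(P)$. Since $\ovl\in\Heinsnullzwei$ is a local minimizer, the starting point is the primal stationarity condition
\[j'(\ovl;\delta l)\geq 0 \qquad \forall\,\delta l\in\Heinsnullzwei,\]
and the first step is to make rigorous sense of it by establishing Hadamard directional differentiability of $j$ at $\ovl$. Since $J$ is smooth quadratic in its arguments, this reduces to the directional differentiability of the control-to-state map $S$. I would show that $S'(\ovl;\delta l)=(\delta\varphi,\delta d)$ is the unique solution of the linearized PDE-ODE obtained by replacing $\max(-\beta(\ovd-\ovv)-r)$ by its pointwise directional derivative in direction $-\beta(\delta d-\delta\varphi)$, which acts as the identity on the active set, as $0$ on the inactive set, and as $w\mapsto\max(w,0)$ on the biactive set. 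Well-posedness of the linearized system follows by the same fixed-point/Gronwall argument as in Proposition \ref{prop:existencesolutionnonlinear}, using that this directional derivative is globally $1$-Lipschitz in its second argument; Hadamard differentiability of $S$ from $\LzweiHminuseins$ into $V\times X$ then follows from the Lipschitz estimate of Lemma \ref{lem:existencelzweihminuseins}.

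Next, I would introduce the adjoint system and the multiplier $\mu$ that encodes the behavior on the biactive set. Given the active, inactive, and biactive sets
\[\Omega^+_t=\{-\beta(\ovd-\ovv)-r>0\},\ \Omega^-_t=\{-\beta(\ovd-\ovv)-r<0\},\ \Omega^0_t=\{-\beta(\ovd-\ovv)-r=0\},\]
the adjoint states $(\ovz,\ovp)\in V\times X$ are defined as the unique solution of the linear backward PDE-ODE system \eqref{eq:adjointeqP} with terminal condition $\ovp(T)=0$ for a given $\mu\in\Lzweizwei$. Integration by parts in time between the linearized state system and the adjoint system then produces a representation of the form
\[j'(\ovl;\delta l)=(\ovz,\delta l)_{\ito}+\alpha_l(\ovl,\delta l)_{\Heinsnullzwei}+R_\mu(\delta l),\]
where $R_\mu(\delta l)$ is a residual supported on $\Omega^0_t$ that vanishes as soon as $\mu=\chi_{\Omega^+_t}\ovp$ on the smooth part $\Omega^+_t\cup\Omega^-_t$. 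Combined with $j'(\ovl;\pm\delta l)\geq 0$ for directions whose linearization vanishes on $\Omega^0_t$, this yields the variational equality \eqref{eq:variationalinequalityP}. Uniqueness of the triple $(\ovz,\ovp,\mu)$ then follows because \eqref{eq:adjointeqP} together with \eqref{eq:signconditionmu} constitutes a well-posed linear backward coupled system of the type already analyzed (as in Proposition \ref{cor: stabestimatedualtimenonlinear} at the semidiscrete level).

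The main obstacle is the third step: upgrading from weak/C-stationarity to the strong stationarity $\mu(t,x)\in[0,\ovp(t,x)]$ on $\Omega^0_t$. The standard idea, going back to Mignot's analysis of the obstacle problem, is to test $j'(\ovl;\delta l)\geq 0$ with carefully chosen directions $\delta l\in\Heinsnullzwei$ whose induced linearization $(\delta\varphi,\delta d)$ has a prescribed sign of $-\beta(\delta d-\delta\varphi)$ on $\Omega^0_t$: perturbations forcing this quantity to be $\geq 0$ yield $\mu\leq\ovp$, and those forcing it to be $\leq 0$ yield $\mu\geq 0$. Carrying this out rigorously requires a density/reachability result for the linearized map on the biactive set, and this is precisely where the regularity of the control space $\Heinsnullzwei$ is essential (and where the naive choice $\Lzweizwei$, discussed in the introduction, would fail). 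Here one exploits the temporal regularity of $\ovv$, $\ovd$ from Proposition \ref{prop:existencesolutionnonlinear} together with the Sobolev embeddings to construct admissible test directions with the required sign pattern. Once this density is in place, combining the one-sided inequalities furnishes the full sign condition \eqref{eq:signconditionmu} and completes the proof.
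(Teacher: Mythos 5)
The paper does not actually prove this theorem: it is stated with the citation [LB19] and the explicit remark that ``the proofs are adaptable to our case with $\Heinsnullzwei$ as control space,'' so there is no in-paper argument to compare against line by line. The only methodological hint the paper gives is in the remark following the theorem, namely that the adjoint equation \eqref{eq:adjointeqP}, the variational equality \eqref{eq:variationalinequalityP} and the regularity of $(\ovz,\ovp,\mu)$ are obtained by \emph{regularizing} the $\max$-operator (via $\maxe$ in \eqref{ex:maxe}) and passing to the limit, the sign condition \eqref{eq:signconditionmu} being the extra ingredient that upgrades weak to strong stationarity. Your outline instead takes the direct primal route (Hadamard directional differentiability of $S$, linearized system with $\max'(y;\cdot)$ equal to the identity, zero, or $\max(\cdot,0)$ on the active, inactive and biactive sets, followed by a Mignot-type sign argument). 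That is a legitimate and standard alternative, and indeed the one used in the cited reference for the strong-stationarity part; so as a roadmap it is consistent with what the paper is importing.

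As a proof, however, it has a genuine gap precisely at the step you yourself call the main obstacle. The reachability/density argument on the biactive set --- producing directions $\delta l\in\Heinsnullzwei$ whose linearization has a prescribed sign of $-\beta(\delta d-\delta\varphi)$ on $\Omega^0_t$ --- is only asserted, and your heuristic for why it should work is inverted. The reference [LB19] works with the \emph{larger} control space $\Lzweizwei$; passing to $\Heinsnullzwei$ \emph{shrinks} the set of admissible test directions and therefore makes the ampleness argument harder, not easier. What rescues the adaptation is the density of $\Heinsnullzwei$ in $\Lzweizwei$ together with the Lipschitz continuity of $S$ and of the linearized solution map, not the ``temporal regularity of $\ovv,\ovd$ together with Sobolev embeddings'' you invoke; as written, that part of your argument would not go through. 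Two further points need repair: the variational equality \eqref{eq:variationalinequalityP} must hold for \emph{all} $\delta l\in\Heinsnullzwei$, whereas your derivation only tests with directions whose linearization vanishes on $\Omega^0_t$ (in the standard scheme the gradient equation already comes out of weak/C-stationarity for all directions, before any sign argument); and uniqueness of $\mu$ does not follow from well-posedness of the backward system \emph{for given} $\mu$ --- on the biactive set $\mu$ must be shown to be determined by \eqref{eq:adjointeqP} together with \eqref{eq:signconditionmu}, which is a separate argument.
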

\begin{remark}
The equations \eqref{eq:adjointeqP} and \eqref{eq:variationalinequalityP} as well as the regularity of $\ovz,\ovp$ and $\mu$ can be derived via regularization of $\max$, for example with the choice $\maxe\colon\mathbb{R}\to\mathbb{R}$
\begin{equation} \label{ex:maxe}
\maxe(x)=\begin{cases} 0 &,x\leq 0, \\ -\frac{1}{2\varepsilon^3}x^4+\frac{1}{\varepsilon^2}x^3 &, x\in (0,\varepsilon) \\ x-\frac{\varepsilon}{2} &, x\geq\varepsilon. \end{cases} 
\end{equation}
It is the sign condition \eqref{eq:signconditionmu} that ensures the equivalence to the primal optimality condition \eqref{eq:primaloptimality}. Without it, the system is only weakly stationary.
\end{remark}
\begin{remark}
The variational formulation \eqref{eq:variationalinequalityP} is the weak formulation of the second order ODE
\[-\partial_{tt}l=\frac{1}{\alpha_l}z\]
with boundary conditions $l(0)=0, \partial_t l(T)=0$.
Since $z\in V$, we may conclude that $\partial_{tt}\ovl\in V$ which implies $\ovl\in V$.
\end{remark}

\subsection{A priori error estimates for the optimal control} \label{ch:errorestimatescontrolnonlinear}
In this section, we finally prove the convergence of two discretization techniques for the control. The quadratic growth condition from Assumption \ref{ass:quadgrowthcond} then yields first error estimates for these discretization techniques. Note, that these estimates are not optimal as we will see in the numerics. 
\subsubsection*{Variational discretization}
We begin with a variational discretization of $(P)$, that is we choose $L_\sigma=\Heinsnullzwei$ in $(P_\sigma)$. To express that the control is not discretized, we refer to the variationally discretized problem as $(P_{\tau h})$
\[(P_{\tau h}) \quad \min \jth(l)=\frac{1}{2}\Vert \vth-\varphi_d\Vert^2_{\ito}+\frac{1}{2}\Vert \dth-d_d\Vert^2_{\ito}+\frac{\alpha_l}{2}\Vert l\Vert^2_{\Heinsnullzwei} \]
with $l\in L_\sigma=\Heinsnullzwei$. The existence of a solution $\ovlth$ may be proven with the exact same arguments used for the proof of Theorem \ref{thm:existenceofoptimalcontrolscontinuous} as $S$ and $\Sth$ are both weakly continuous for arguments in $\Heinsnullzwei$. \\
\begin{theorem} \label{thm:convergencevariational}
Let $\ovl\in\Heinsnullzwei$ be a local solution of $(P)$ and let Assumption \ref{ass:quadgrowthcond} be satisfied for $\ovl$. For every $\sigma:=(\tau,h)$, there exists a local solution $\ovlth\in\Heinsnullzwei$ of $(P_{\tau h})$ such that $\ovlth \to \ovl$ in $\Heinsnullzwei$ for $\sigma\to (0,0)$.
\end{theorem}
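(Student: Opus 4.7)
The strategy is a standard localization argument combined with the quadratic growth condition. First I would introduce the auxiliary localized discrete problem
\[(P_{\tau h}^\rho) \quad \min \jth(l) \quad \text{s.t.} \quad l \in \Heinsnullzwei, \ \Vert l - \ovl \Vert_{\Heinsnullzwei} \leq \rho,\]
and establish existence of a minimizer $\ovlth$ by the direct method: the admissible set is bounded, convex and closed, hence weakly sequentially compact in $\Heinsnullzwei$; the reduced cost $\jth$ is weakly lower semicontinuous since the regularization term is a squared Hilbert norm and, via the compact embedding $\Heinsnullzwei \hookrightarrow\hookrightarrow \LzweiHminuseins$ together with the Lipschitz continuity of $\Sth$, the state-tracking part is even continuous along weakly convergent sequences.

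Next I would extract a subsequential limit as $\sigma = (\tau,h) \to 0$. Boundedness of $\ovlth$ in $\Heinsnullzwei$ yields, along a subsequence, $\ovlth \rightharpoonup \tilde l$ in $\Heinsnullzwei$ with $\tilde l \in \bar B_\rho(\ovl)$ and $\ovlth \to \tilde l$ strongly in $\LzweiHminuseins$. I would then prove $\Sth(\ovlth) \to S(\tilde l)$ in $V \times X$ by splitting
\[\Sth(\ovlth) - S(\tilde l) = [\Sth(\ovlth) - S(\ovlth)] + [S(\ovlth) - S(\tilde l)].\]
The first bracket tends to zero by Theorem \ref{thm:overallerrorestimatenonlinear}, with the implicit constant controlled by $\Vert \ovlth \Vert_{\Heinszwei}$ through the stability estimates of Lemma \ref{lem:boundednessstatescontinuous} and Lemma \ref{lem:boundednessdtinHs} and hence uniformly bounded. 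The second bracket vanishes by the Lipschitz continuity of $S$ from $\LzweiHminuseins$ into $V\times X$ (Lemma \ref{lem:existencelzweihminuseins}).

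To identify $\tilde l$, I would pass to the limit in the optimality of $\ovlth$. For every admissible $l$, $\jth(\ovlth) \leq \jth(l) \to j(l)$; choosing $l = \ovl$ gives $\limsup \jth(\ovlth) \leq j(\ovl)$, while weak lower semicontinuity of the regularization norm combined with strong convergence of the state contributions yields $j(\tilde l) \leq \liminf \jth(\ovlth)$. Hence $j(\tilde l) \leq j(\ovl)$, and since $\tilde l \in \bar B_\rho(\ovl)$, the quadratic growth condition (Assumption \ref{ass:quadgrowthcond}) forces $\tilde l = \ovl$ and $\jth(\ovlth) \to j(\ovl)$. Since the state part of the cost converges to its continuous counterpart, the regularization part must converge as well, i.e.\ $\Vert \ovlth \Vert_{\Heinsnullzwei} \to \Vert \ovl \Vert_{\Heinsnullzwei}$; combined with weak convergence in the Hilbert space $\Heinsnullzwei$ this upgrades to strong convergence. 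A standard subsequence--subsequence argument extends the limit from the extracted subsequence to the full family. Finally, once $\ovlth \to \ovl$ strongly in $\Heinsnullzwei$, for $\sigma$ sufficiently small $\ovlth$ lies in the open ball $B_\rho(\ovl)$, so the localization constraint is inactive and $\ovlth$ is in fact a local minimizer of the unconstrained problem $(P_{\tau h})$.

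The main obstacle I anticipate is ensuring that the discretization error bound from Theorem \ref{thm:overallerrorestimatenonlinear} decays to zero \emph{uniformly} along the family $\{\ovlth\}_\sigma$. This reduces to uniform control of $\Vert \partial_t \ovlth\Vert_{\Lzweizwei}=\Vert \ovlth \Vert_{\Heinsnullzwei}$ and of the continuous-state quantities $\Vert \partial_t \varphi \Vert_{\ito}$, $\Vert \nabla^2 \vt \Vert_{\ito}$ and $\Vert \dt \Vert_{L^2(\I;\Hs)}$ driven by $\ovlth$; all of these are uniformly bounded precisely because $\ovlth$ is confined to $\bar B_\rho(\ovl)$. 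A mild subtlety is that Assumption \ref{ass:quadgrowthcond} gives quadratic growth only in the $\Lzweizwei$-norm, whereas the sought convergence is in the stronger $\Heinsnullzwei$-norm; this mismatch is harmless because growth is used only to identify the weak limit, after which norm convergence of the $\Heinsnullzwei$-seminorm promotes the convergence to strong automatically.
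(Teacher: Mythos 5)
Your proposal is correct and follows the same overall route as the paper: localization to $(P_{\tau h}^\rho)$, weak subsequential limit $\tilde l$, the $\liminf$--$\limsup$ sandwich $j(\tilde l)\leq\liminf\jth(\ovlth)\leq\limsup\jth(\ovl)=j(\ovl)$, identification $\tilde l=\ovl$ via strict local optimality from Assumption \ref{ass:quadgrowthcond}, upgrade to strong convergence in $\Heinsnullzwei$ by norm convergence of the regularization term, and deactivation of the localization constraint for small $\sigma$. The one place you genuinely diverge is the convergence of the states: the paper splits through $\Sth(\tilde l)$, i.e.\ it combines the Lipschitz continuity of $\Sth$ on $\LzweiHminuseins$ (uniform in $\sigma$) with the discretization error estimate applied once, at the \emph{fixed} limit $\tilde l\in\Heinsnullzwei$ --- this is why the paper remarks that the error estimate is valid only because $\tilde l\in\Heinsnullzwei$. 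You instead split through $S(\ovlth)$, which requires Theorem \ref{thm:overallerrorestimatenonlinear} to hold \emph{uniformly along the whole family} $\{\ovlth\}$; you correctly identify this as the main obstacle and close it by observing that confinement to $\bar B_\rho(\ovl)$ bounds $\Vert\ovlth\Vert_{\Heinsnullzwei}$ and hence, via Lemma \ref{lem:boundednessstatescontinuous}, Theorem \ref{thm:stabestimatesprimalnonlinear} and Lemma \ref{lem:boundednessdtinHs}, all the norms entering the error constant. Both arguments work; the paper's splitting is marginally lighter because it needs the error estimate only at a single function, while yours makes the uniformity requirement (and the stability machinery behind it) explicit, which is the same bookkeeping one needs anyway in the subsequent error estimate of Theorem \ref{thm:convergencevariationalerrorestimate}.
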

\begin{proof} The proof is standard but we will give a sketch for the convenience of the reader. We denote the reduced discrete objective by $\jth(l):=J(\Sth(l),l)$ and define the auxiliary optimal control problem
\[(P_{\tau h}^\rho)\begin{cases} \min \jth(l) \\
s.t. \,\; l\in B_\rho(\ovl) \end{cases}
\]
with $\rho>0$ being the radius of strict local optimality of $\ovl$. This problem admits global solutions whose existence may be proven by the same standard arguments used before. Let $\{\ovlth\}$ be a sequence of global minimizers of $(P_{\tau h}^\rho)$. Since $\{\ovlth\}\subset B_\rho(\ovl)$, there exists $\tilde{l}\in B_\rho(\ovl)$ and a weakly converging subsequence with $\ovlth\rightharpoonup \tilde{l}$ in $\Heinsnullzwei$. Due to compact embedding, this convergence is strong in $\LzweiHminuseins$.  As a consequence of the Lipschitz continuity of $\Sth$ and the error estimates proven in the last section, we have the convergence of the states in $\Lzweizwei\times\Lzweizwei$ that is
\begin{align}
\Vert \Sth(\ovlth)-S(\tilde{l})\Vert_{(\ito)^2} \to 0
\end{align}
holds true for $\sigma\to(0,0)$. Note, that the error estimate is valid only due to $\tilde{l}\in\Heinsnullzwei$. As $\Vert \cdot\Vert_{\Heinsnullzwei}$ is weakly lower semi-continuous, we obtain 
\begin{equation}
\label{eq:convergerncevariationmalhilfsresultat2}
j(\tilde{l})\leq \liminf_{\sigma\to (0,0)} \jth(\ovlth)\leq \limsup_{\sigma\to(0,0)} \jth(\ovlth)\leq \limsup_{\sigma\to(0,0)}\jth(\ovl) = j(\ovl) 
\end{equation}
The last inequality follows from admissibleness of $\ovl$ for $(P_{\tau h}^\rho)$ and the convergence is a direct consequence of the error estimates from the last section for $\ovl\in\Heinsnullzwei$. \\
The quadratic growth condition from Assumption \ref{ass:quadgrowthcond} ensures that $\ovl$ is a strict local minimizer of $(P)$. Thus, $\tilde{l}\in B_\rho(\ovl)$ together with \eqref{eq:convergerncevariationmalhilfsresultat2} directly leads to $\ovl=\tilde{l}$, that is $\ovlth\rightharpoonup \ovl$ in $\Heinsnullzwei$. Moreover, we have shown that $\jth(\ovlth)\to j(\ovl)$. Together with the strong convergence of the states, this immediately yields norm convergence of the controls and thus strong convergence of the controls $\ovlth\to\ovl$ in $\Heinsnullzwei$. It remains to prove that $\ovlth$ is indeed a local minimizer of $(P_{\tau h})$. But this follows from the fact that $\Vert \ovl-\ovlth\Vert_{\Heinsnullzwei}<\rho$ for $\sigma$ small enough. 
\end{proof}

Mere convergence of $\ovlth\to\ovl$ enables us to apply the quadratic growth condition to the pair $\ovlth,\ovl$. In this way, we obtain an a priori error estimate for the variational discretization. 
\begin{theorem} \label{thm:convergencevariationalerrorestimate}
Let $\ovl\in\Heinsnullzwei$ be a local minimizer for $(P)$, let Assumption \ref{ass:quadgrowthcond} be satisfied for $\ovl$ and let $\{\ovlth\}$ be a sequence of local minimizers of $(P_{\tau h})$ which converges to $\ovl$. Then, we have the error estimate
\begin{equation}
\label{eq:errorestimatevariational}
\Vert \ovl-\ovlth\Vert_{\Lzweizwei}\leq C\left(\tau^\frac{1}{2}+h^\frac{s}{2}\right)
\end{equation}
with $1\leq s<\frac{3}{2}$ and $C>0$ independent of $\tau$ and $h$. 
\end{theorem}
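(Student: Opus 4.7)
The plan is to combine the quadratic growth condition from Assumption \ref{ass:quadgrowthcond} with the state error estimate from Theorem \ref{thm:overallerrorestimatenonlinear} via a standard splitting argument. Since $\ovlth\to\ovl$ in $\Heinsnullzwei$, for $\sigma=(\tau,h)$ sufficiently small we have $\ovlth\in B_\rho(\ovl)$, so the quadratic growth condition yields
\[
\gamma\Vert\ovl-\ovlth\Vert_{\Lzweizwei}^2\leq j(\ovlth)-j(\ovl).
\]
I would then insert the discrete objective in a telescoping manner,
\[
j(\ovlth)-j(\ovl)=\bigl(j(\ovlth)-\jth(\ovlth)\bigr)+\bigl(\jth(\ovlth)-\jth(\ovl)\bigr)+\bigl(\jth(\ovl)-j(\ovl)\bigr),
\]
and observe that the middle term is non-positive: indeed, $\ovlth$ is a local minimizer of $(P_{\tau h})$ and, for $\sigma$ small enough, $\ovl$ lies in the corresponding local neighborhood of $\ovlth$ in $\Heinsnullzwei$ so that it is admissible in the local minimization problem, giving $\jth(\ovlth)\leq\jth(\ovl)$.

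The remaining two terms only involve differences of the tracking part of the objective (the control regularization cancels), and each has the form
\[
j(l)-\jth(l)=\tfrac{1}{2}\bigl(\Vert\varphi-\varphi_d\Vert_{\ito}^2-\Vert\vth-\varphi_d\Vert_{\ito}^2\bigr)+\tfrac{1}{2}\bigl(\Vert d-d_d\Vert_{\ito}^2-\Vert\dth-d_d\Vert_{\ito}^2\bigr)
\]
for $l\in\{\ovl,\ovlth\}$ with associated states $(\varphi,d)=S(l)$ and $(\vth,\dth)=\Sth(l)$. Using the polarization identity $\|a\|^2-\|b\|^2=(a-b,a+b)$ and Cauchy--Schwarz, each such term is controlled by
\[
C\bigl(\Vert\varphi-\vth\Vert_{\ito}+\Vert d-\dth\Vert_{\ito}\bigr)\bigl(\Vert\varphi\Vert_{\ito}+\Vert\vth\Vert_{\ito}+\Vert d\Vert_{\ito}+\Vert\dth\Vert_{\ito}+\Vert\varphi_d\Vert_{\ito}+\Vert d_d\Vert_{\ito}\bigr).
\]
The continuous and discrete states for $l\in\{\ovl,\ovlth\}$ are uniformly bounded in $\Lzweizwei$ by Lemmas \ref{lem:boundednessstatescontinuous}, \ref{lem:stabestimateinftyzwo}, \ref{lem:boundednessdtinHs} and Theorem \ref{thm:stabestimatesprimalnonlinear}, since $\{\ovlth\}$ converges and is therefore bounded in $\Heinsnullzwei\hookrightarrow\LL{\infty}{2}$; thus the second factor is uniformly bounded independently of $\tau,h$. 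For the first factor, Theorem \ref{thm:overallerrorestimatenonlinear} applied to both $l=\ovl\in\Heinsnullzwei$ and $l=\ovlth\in\Heinsnullzwei$ (whose $\Heinsnullzwei$-norms and $\Hs$-initial data are uniformly controlled) yields
\[
\Vert\varphi-\vth\Vert_{\ito}+\Vert d-\dth\Vert_{\ito}\leq C(\tau+h^s).
\]

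Combining these estimates produces
\[
\gamma\Vert\ovl-\ovlth\Vert_{\Lzweizwei}^2\leq C(\tau+h^s),
\]
and taking square roots together with the elementary bound $\sqrt{\tau+h^s}\leq\tau^{1/2}+h^{s/2}$ gives the claimed estimate. I expect the main technical point to be verifying that the constant in the state error estimate can be chosen uniformly along the sequence $\{\ovlth\}$; this reduces to checking that $\Vert\ovlth\Vert_{\Heinsnullzwei}$ and $\Vert\partial_t\ovlth\Vert_{\ito}$ (which appear in the $\Heinszwei$-stability bound of Lemma \ref{lem:boundednessstatescontinuous}) are uniformly bounded, which follows from the convergence $\ovlth\to\ovl$ in $\Heinsnullzwei$ established in Theorem \ref{thm:convergencevariational}.
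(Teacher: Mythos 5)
Your proposal follows essentially the same route as the paper's proof: the quadratic growth condition, the three-term telescoping split with the middle term non-positive, and the polarization/Cauchy--Schwarz treatment of the two remaining finite element error terms via Theorem \ref{thm:overallerrorestimatenonlinear} together with the stability bounds for continuous and discrete states. The only cosmetic difference is in justifying $\jth(\ovlth)\leq\jth(\ovl)$: the paper invokes that $\ovlth$ is (by the construction in Theorem \ref{thm:convergencevariational}) a global minimizer of the auxiliary problem over $B_\rho(\ovl)$ with $\ovl$ admissible there, which is cleaner than appealing to an a priori uncontrolled radius of local optimality of $\ovlth$.
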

\begin{proof}
The essential ingredient for the proof is the quadratic growth condition. As $\ovlth\to\ovl$ in \linebreak $\Heinsnullzwei$, we have $\Vert\ovlth-\ovl\Vert_{\Heinsnullzwei}<\rho$ for $(\tau,h)$ small enough. Thus, we may estimate 
\begin{align*}
\gamma \Vert\ovlth-\ovl\Vert_{\Lzweizwei}^2 & \leq j(\ovlth)-j(\ovl) \\
&= (j(\ovlth)-\jth(\ovlth)) +(\jth(\ovlth)-\jth(\ovl)) + (\jth(\ovl)-j(\ovl)) \\
&= (I) + (II) + (III)
\end{align*}
We will estimate the three terms separately and start with term $(II)$. As $\ovlth$ is a global minimizer of $(P_{\tau h})$ in $B_\rho(\ovl)$ and $\ovl$ is admissible for $(P_{\tau h})$, we have $\jth(\ovlth)\leq \jth(\ovl)$ which is equivalent to $(II)\leq 0$. Thus, this term can be omitted for the error estimation. \\
The first and the third term are finite element errors for the state equation. Both can be estimated in the same way. We will present the error estimate for the first term in detail. 
\begin{align*}
j(\ovlth)-\jth(\ovlth)&=\frac{1}{2}\left(\Vert S(\ovlth)-y_d\Vert_{(\ito)^2}^2-\Vert \Sth(\ovlth)-y_d\Vert_{(\ito)^2}^2\right) \\
&=\frac{1}{2}(S(\ovlth)-\Sth(\ovlth), S(\ovlth)+\Sth(\ovlth)-2y_d)_{(\ito)^2} \\
&\leq \frac{1}{2}(\Vert S(\ovlth)\Vert_{(\ito)^2}+ \Vert \Sth(\ovlth)\Vert_{(\ito)^2} + 2\Vert y_d\Vert_{(\ito)^2})\Vert S(\ovlth)-\Sth(\ovlth)\Vert_{(\ito)^2}
\end{align*}
Boundedness of the states both for the continuous (see Lemma \ref{lem:boundednessstatescontinuous}) and the discrete (see Theorem \ref{thm:stabestimatesprimalnonlinear}) states together with the boundedness  of $\Vert \ovlth\Vert_{\ito}$ independent of $\tau$ and $h$ yields boundedness of the first term. For the second term, we obtain due to the error estimate from Theorem \ref{thm:overallerrorestimatenonlinear} 
\[\Vert S(\ovlth)-\Sth(\ovlth)\Vert_{(\ito)^2}\leq C(\tau+h^s).\]
Taking square roots on both sides then yields the assertion. 
\end{proof}

\subsubsection*{dG(0)cG(1) discretization}
A more interesting discretization technique is the choice $L_\sigma=\Vth$. Since $\Vth\not\subset\Heinsnullzwei$, we have to modify the discrete objective and choose $\Vert l\Vert_{L_\sigma}^2=\sum\limits_{m=1}^M\Vert\partial_t l\Vert_{\imto}^2+\sum\limits_{m=1}^M\tau_m^{-1}\Vert [l]_{m-1}\Vert^2$.
Thus, in this subsection we prove the convergence of local minimizers $\ovls\in\Vth$ of
\[(P_{\sigma}) \min \hat{J}(\vth,\dth,l)=\frac{1}{2}\Vert \vth-\varphi_d\Vert^2_{\ito}+\frac{1}{2}\Vert \dth-d_d\Vert^2_{\ito}+\frac{\alpha_l}{2}\Vert l\Vert^2_{L_\sigma}\]
towards a strict local minimizer $\ovl\in\Heinsnullzwei$ of $(P)$. The idea of the proof is still the same as before but one has to be more careful since expressions such as $j(\ovls)$ are not necessarily well defined. This fact has a direct consequence for the localization argument because we cannot work with $B_\rho(\ovl)\cap\Vth$ as the set of admissible controls in the auxiliary problem. Instead, we choose $B^I_\rho(\ovl)\cap\Vth$.  

\begin{theorem} \label{thm:convergencecgcg}
Let $\ovl\in\Heinsnullzwei$ be a strict local solution of $(P)$ in the sense of $\Lzweizwei$. For every $\sigma:=(\tau,h)$, there exists a local solution $\ovls\in\V{0}{1}$ of $(P_{\sigma})$ such that $\ovls \to \ovl$ in $\Lzweizwei$ for $\sigma\to (0,0)$. 
\end{theorem}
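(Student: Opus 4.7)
The strategy mirrors Theorem \ref{thm:convergencevariational}, but has to cope with the nonconformity $\V{0}{1}\not\subset\Heinsnullzwei$: $j(\ovls)$ need not be defined, Assumption \ref{ass:quadgrowthcond} cannot be applied directly to $\ovls$, and local optimality of $\ovl$ is only available in the $\Lzweizwei$-sense. Following the hint in the text, I would introduce the auxiliary problem
\[ (P_\sigma^\rho)\qquad \min\jth(l) \quad \text{s.t. } l\in B_\rho^I(\ovl)\cap\V{0}{1},\]
with $B_\rho^I(\ovl)=\{l\in\Lzweizwei : \|l-\ovl\|_{\ito}<\rho\}$ and $\rho$ the radius of local optimality of $\ovl$. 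Since $\V{0}{1}$ is finite-dimensional for fixed $\sigma$ and $\jth$ is continuous and coercive in $\|\cdot\|_{L_\sigma}$, a global minimizer $\ovls$ of $(P_\sigma^\rho)$ exists by standard arguments.

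Next I would construct a recovery sequence $\{l_\sigma\}\subset\V{0}{1}$ with $l_\sigma\to\ovl$ in $\Lzweizwei$, $\|l_\sigma\|_{L_\sigma}\leq C\|\ovl\|_{\Heinsnullzwei}$, and $\jth(l_\sigma)\to j(\ovl)$, using e.g.\ $l_\sigma|_{I_m}:=P_h^V\bigl(\tau_m^{-1}\int_{I_m}\ovl(t)\,dt\bigr)$. Standard projection estimates yield $\Lzweizwei$-convergence, while rewriting $[l_\sigma]_{m-1}$ as a double integral of $\partial_t\ovl$ and applying Jensen's inequality gives $\sum_m \tau_m^{-1}\|[l_\sigma]_{m-1}\|^2\leq C\|\partial_t\ovl\|_{\ito}^2$. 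Combined with the Lipschitz continuity of $\Sth$ on $\LzweiHminuseins$ and the discrete state error estimate from Theorem \ref{thm:overallerrorestimatenonlinear} applied at $\ovl\in\Heinsnullzwei$, one obtains $\Sth(l_\sigma)\to S(\ovl)$ in the state norms and thus $\jth(l_\sigma)\to j(\ovl)$.

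The optimality $\jth(\ovls)\leq\jth(l_\sigma)$ then yields a uniform bound $\|\ovls\|_{L_\sigma}\leq C$. To exploit weak compactness, I would introduce a lift $\hat{\ovls}\in\Heinsnullzwei$, namely the piecewise linear continuous time-interpolant with $\hat{\ovls}(t_m)=\ovls|_{I_m}$ and $\hat{\ovls}(0)=0$, for which $\|\hat{\ovls}\|_{\Heinsnullzwei}^2=\|\ovls\|_{L_\sigma}^2$. After passing to a subsequence, $\hat{\ovls}\rightharpoonup\tilde l$ in $\Heinsnullzwei$; since $\|\hat{\ovls}-\ovls\|_{\ito}\to 0$ by a standard interpolation-residual estimate and $\Heinsnullzwei\hookrightarrow\hookrightarrow\LzweiHminuseins$ compactly, $\ovls\to\tilde l$ strongly in $\LzweiHminuseins$ and weakly in $\Lzweizwei$. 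The Lipschitz continuity of $S$ together with the state error at $\tilde l\in\Heinsnullzwei$ yields $\Sth(\ovls)\to S(\tilde l)$. Passing to the limit in $\jth(\ovls)\leq\jth(l_\sigma)$ via weak lower semicontinuity of $\|\cdot\|_{L_\sigma}$ (which transfers to $\|\cdot\|_{\Heinsnullzwei}$ through the lift) gives $j(\tilde l)\leq j(\ovl)$, and since $\tilde l\in\overline{B_\rho^I(\ovl)}$, the strict $\Lzweizwei$-local optimality of $\ovl$ forces $\tilde l=\ovl$. Norm convergence $\jth(\ovls)\to j(\ovl)$ then upgrades the weak convergence to strong convergence in $\Lzweizwei$; finally $\|\ovls-\ovl\|_{\ito}<\rho$ for small $\sigma$ renders the auxiliary constraint inactive, so $\ovls$ is in fact a local minimizer of the unrestricted problem $(P_\sigma)$.

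\textbf{Main obstacle.} The delicate part will be the construction and analysis of the recovery sequence $l_\sigma$ and the lift $\hat{\ovls}$, which together must serve as conforming bridges between $\V{0}{1}$ and $\Heinsnullzwei$: they must be consistent simultaneously with the $L_\sigma$-seminorm and the $H^1$-seminorm in time, and must respect the boundary condition $l(0)=0$ so that the weak limit $\tilde l$ can be identified inside $\Heinsnullzwei$ and the strict local minimality of $\ovl$ can be applied.
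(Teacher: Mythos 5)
Your proposal is correct and follows essentially the same route as the paper: the same auxiliary problem over $B^I_\rho(\ovl)\cap\V{0}{1}$, a conforming recovery sequence for the $\limsup$ bound, the piecewise linear lift $\hat{l}_\sigma$ with $\Vert\partial_t\hat{l}_\sigma\Vert_{\ito}^2=\sum_m\tau_m^{-1}\Vert[\ovls]_{m-1}\Vert^2$ and $\Vert\hat{l}_\sigma-\ovls\Vert_{\ito}=O(\tau)$ to identify the weak limit inside $\Heinsnullzwei$, strict $\Lzweizwei$-local optimality to conclude $\tilde{l}=\ovl$, and norm convergence through the lift to upgrade to strong convergence. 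The only substantive difference is your recovery sequence $\pi_h^V(P_\tau\ovl)$ versus the paper's $\pi_h^V(I_\tau\ovl)$: with the nodal interpolant one gets $\tau_m^{-1}\Vert[I_\tau\ovl]_{m-1}\Vert^2\le\int_{I_m}\Vert\partial_t\ovl\Vert^2\,dt$ with constant exactly $1$ by Cauchy--Schwarz, whereas your Jensen bound with a generic constant $C$ is not enough — you need $\limsup_{\sigma}\Vert l_\sigma\Vert_{L_\sigma}^2\le\Vert\partial_t\ovl\Vert_{\ito}^2$ (constant one), since otherwise the chain $j(\tilde{l})\le\limsup\jth(\ovls)\le\limsup\jth(l_\sigma)$ does not close at $j(\ovl)$ and the identification $\tilde{l}=\ovl$ fails.
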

\begin{proof} We reduce the function $\hat{J}$ onto the control via $\jth(l):=\hat{J}(\Sth(l),l)$. Then, we define the auxiliary problem
\[(P_{\sigma}^\rho)\begin{cases} \min \jth(l) \\
s.t. \quad l\in B^I_\rho(\ovl)\cap\V{0}{1} \end{cases}
\]
Again, there exists a global solution $\ovls$ for each $\sigma$. The sequence $\{\ovls\}$ is now bounded in $\Lzweizwei$, i.e. there exists $\tilde{l}\in B^I_\rho(\ovl)$ such that $\ovl_\sigma\rightharpoonup \tilde{l}$. Unfortunately, we cannot conclude the strong convergence of the associated states as we are lacking a compact embedding. We start with an estimate from above for $\jth(\ovls)$. We require a projection of $\ovl$ onto $\Vth$ since $\ovl$ is not admissible. We choose $\Pi\ovl:=\pi_h^V(I_\tau\ovl)$ which converges strongly to $\ovl$ in $\Lzweizwei$. Thus, we have $\Sth(\Pi\ovl)\to S(\ovl)$ in $\Lzweizwei\times\Lzweizwei$ as well as the admissibleness of $\Pi\ovl$ for $\sigma$ small enough. Moreover, as $P_h^V$ is Lipschitz continuous with constant 1 we obtain
\begin{align*}
\sum\limits_{m=1}^M\tau_m^{-1}\Vert[\Pi\ovl]_{m-1}\Vert^2& \leq \sum\limits_{m=1}^M\tau_m^{-1}\Vert I_\tau\ovl(t_m)-I_\tau\ovl(t_{m-1})\Vert^2\\
&=\sum\limits_{m=1}^M\tau_m^{-1}\Vert \ovl(t_m)-\ovl(t_{m-1})\Vert^2  \rightarrow \Vert\partial_t l\Vert_{\ito}^2. 
\end{align*}

Finally, we arrive at
\begin{align}
\jth(\ovls)&\leq \jth(\Pi\ovl) \leq \frac{1}{2}\Vert \Sth(\Pi\ovl)-y_d\Vert^2_{(\ito)^2}+ \frac{\alpha_l}{2}\sum\limits_{m=1}^M\Vert \frac{\ovl(t_m)-\ovl(t_{m-1})}{\tau_m}\Vert^2_{\imto} \nonumber\\
 \label{eq:convergencedgcghilfsresultat1}
&\rightarrow j(\ovl).
\end{align}
In particular, this upper bound for $\jth(\ovls)$ also provides an upper bound  \begin{equation} \label{eq:convergencedgcghilfsresultat2}
\sum\limits_{m=1}^M\tau_m^{-1}\Vert [\ovls]_{m-1}\Vert^2\leq C
\end{equation}
with $C>0$ independent of $\tau$ and $h$.  \\
Next, we will provide a lower bound for $\liminf_{\sigma\to(0,0)}\jth(\ovls)$. To overcome the current nonexistence of $j(\tilde{l})$ due to low temporal regularity we will prove higher temporal regularity of $\tilde{l}$ first. We define an approximation $\hat{l}_\sigma$ of $\ovl_\sigma$ which belongs to $\Heinsnullzwei$ via
\[\hat{l}_\sigma(t):=\begin{cases} \frac{1}{\tau_1}t\ovls(t_1) &,\text{ if } t\in I_1, \\
\ovl_{\sigma,m-1}+\frac{1}{\tau_m}(t-t_{m-1})[\ovl_\sigma]_{m-1} &,\text{ if } t\in I_m,m=2,\ldots,M. \end{cases}\]
For $\hat{l}_\sigma$, we have the following properties:
\begin{itemize}
\item[(i)] $\Vert \partial_t\hat{l}_\sigma\Vert_{\ito}^2=\sum\limits_{m=1}^M \tau_m\Vert\frac{1}{\tau_m}[\ovl_\sigma]_{m-1}\Vert^2=\sum\limits_{m=1}^M\tau_m^{-1}\Vert[\ovl_\sigma]_{m-1}\Vert^2$
\item[(ii)] $\Vert \hat{l}_\sigma-\ovl_\sigma\Vert_{\ito}^2=\frac{1}{3}\sum\limits_{m=1}^M\tau_m\Vert[\ovl_\sigma]_{m-1}\Vert^2$
\end{itemize}
Due to \eqref{eq:convergencedgcghilfsresultat2}, (i) indicates that $\Vert \hat{l}_\sigma\Vert_{\Heinsnullzwei}\leq C$. Hence, there exists a subsequence converging weakly in $\Heinsnullzwei$ towards a function $g\in\Heinsnullzwei$. This convergence is again strong in $\LzweiHminuseins$ which implies the convergence of the states. Equation \eqref{eq:convergencedgcghilfsresultat2} in combination with equation (ii) yields
\begin{equation} \label{eq:convergencedgdgHilfsresultat4}
\Vert\hat{l}_\sigma-\ovls\Vert_{\ito}^2\leq \frac{1}{3}\tau^2\sum\limits_{m=1}^M\tau_m^{-1}\Vert[\ovl_\sigma]_{m-1}\Vert^2\leq C\tau^2\to 0.
\end{equation}
Thus, we may conclude 
\begin{itemize}
\item[(iii)] $\Vert\Sth(\hat{l}_\sigma)-\Sth(\ovls)\Vert_{(\ito)^2}\leq L_S\Vert\hat{l}_\sigma-\ovls\Vert_{\ito} \to 0$
\end{itemize}
as well as
\begin{itemize}
\item[(iv)] $\Vert \ovls-g\Vert_{\LzweiHminuseins}\leq \Vert \hat{l}_\sigma-\ovls\Vert_{\ito} + \Vert \hat{l}_\sigma-g\Vert_{\LzweiHminuseins} \to 0.$
\end{itemize}
Because of the weak convergence $\ovls\rightharpoonup \tilde{l}$ in $\Lzweizwei$ which also holds true in $\LzweiHminuseins$, we have $\tilde{l}=g$ as a consequence of the uniqueness of weak limits. Since $g\in\Heinsnullzwei$, we finally showed that $\tilde{l}$ possesses enough temporal regularity for expressions $j(\tilde{l})$ to be well defined. 
From here, we conclude
\begin{align*}
j(\tilde{l})=j(g)&\leq \liminf_{\sigma\to(0,0)} \jth(\hat{l}_\sigma) \leq \limsup_{\sigma\to(0,0)} \jth(\hat{l}_\sigma) \\
&= \limsup_{\sigma\to(0,0)} \frac{1}{2}\Vert \Sth(\hat{l}_\sigma)-y_d\Vert_{(\ito)^2}^2+\frac{\alpha_l}{2}\Vert\partial_t\hat{l}_\sigma\Vert^2_{\ito} \\
&\underset{(\star)}{=}\limsup_{\sigma\to(0,0)} \frac{1}{2}\Vert \Sth(\ovls)-y_d\Vert_{(\ito)^2}^2+\frac{\alpha_l}{2}\sum\limits_{m=1}^M\tau_m^{-1}\Vert [\ovls]_{m-1}\Vert^2 \\
&=\limsup_{\sigma\to(0,0)} \jth(\ovls) \leq \limsup_{\sigma\to(0,0)} \jth(\Pi\ovl) =j(\ovl).
\end{align*}
Note, that equality $(\star)$ is due to (i) and (iii). Since there holds $\tilde{l}\in B^I_\rho(\ovl)$, we arrive at $\tilde{l}=g=\ovl$. The convergence $\jth(\hat{l}_\sigma)\to j(\ovl)$ together with the convergence of the states $\Sth(\hat{l}_\sigma)\to S(\ovl)$ then yields the strong convergence of $\hat{l}_\sigma\to\ovl$ in $\Heinsnullzwei$. In particular, we have the strong convergence of $\partial_t\hat{l}_\sigma$ to $\partial_t\ovl$ in $\Lzweizwei$ which is nothing else than the convergence of $\sum\limits_{m=1}^M\tau_m^{-1}\Vert[\ovls]_{m-1}\Vert^2$ to $\Vert\partial_t\ovl\Vert_{\ito}^2$. Together with (iii), we obtain the convergence $\jth(\ovls)\to j(\ovl)$. Finally, \eqref{eq:convergencedgdgHilfsresultat4} yields
\[\Vert \ovls-\ovl\Vert_{\ito}\leq \Vert \ovls-\hat{l}_\sigma\Vert_{\ito}+\Vert \hat{l}_\sigma-\ovl\Vert_{\Heinsnullzwei}\to 0.\]
Local optimality of $\ovls$ for $(P_\sigma)$ now follows again immediately from this convergence. 
\end{proof}

Since the quadratic growth condition is not applicable for the dG(0)cG(1)-discretization, we will again make use of $\hat{l}_\sigma$ to derive an error estimate for $\ovls-\ovl$. 
\begin{theorem} \label{thm:convergencedgcgerrorestimate}
Let $\ovl\in\Heinsnullzwei$ be a strict local minimizer for $(P)$ in the sense of \linebreak $\Lzweizwei$, let Assumption \ref{ass:quadgrowthcond} be satisfied and let $\{\ovls\}$ be a sequence of local minimizers of $(P_{\sigma})$ which converges to $\ovl$ in $\Lzweizwei$. Then, we have the error estimate
\begin{equation}
\label{eq:errorestimatedgcg}
\Vert \ovl-\ovls\Vert_{\Lzweizwei}\leq C\left(\tau^\frac{1}{2}+h^\frac{s}{2}\right)
\end{equation}
with $s<\frac{3}{2}$ and $C>0$ independent of $\tau$ and $h$. 
\end{theorem}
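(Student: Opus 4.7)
The plan is to transfer the quadratic growth condition to the pair $(\ovl,\ovls)$ via the piecewise linear auxiliary function $\hat{l}_\sigma\in\Heinsnullzwei$ already constructed in the proof of Theorem \ref{thm:convergencecgcg}, because $\ovls\notin\Heinsnullzwei$ and hence $j(\ovls)$ is not directly available. Concretely, since Theorem \ref{thm:convergencecgcg} yields $\hat{l}_\sigma\to\ovl$ in $\Heinsnullzwei$, we have $\hat{l}_\sigma\in B_\rho(\ovl)$ for $\sigma$ sufficiently small, so Assumption \ref{ass:quadgrowthcond} gives
\[
\gamma\Vert\hat{l}_\sigma-\ovl\Vert_{\ito}^2\leq j(\hat{l}_\sigma)-j(\ovl).
\]
Combined with the triangle inequality
\[
\Vert\ovls-\ovl\Vert_{\ito}\leq\Vert\ovls-\hat{l}_\sigma\Vert_{\ito}+\Vert\hat{l}_\sigma-\ovl\Vert_{\ito},
\]
and the bound $\Vert\ovls-\hat{l}_\sigma\Vert_{\ito}\leq C\tau$ obtained from property (ii) and \eqref{eq:convergencedgcghilfsresultat2} (exactly as in \eqref{eq:convergencedgdgHilfsresultat4}), it suffices to estimate $j(\hat{l}_\sigma)-j(\ovl)$ by $C(\tau+h^s)$.

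I would decompose
\[
j(\hat{l}_\sigma)-j(\ovl)=\bigl[j(\hat{l}_\sigma)-\jth(\hat{l}_\sigma)\bigr]+\bigl[\jth(\hat{l}_\sigma)-\jth(\ovls)\bigr]+\bigl[\jth(\ovls)-\jth(\Pi\ovl)\bigr]+\bigl[\jth(\Pi\ovl)-j(\ovl)\bigr]
\]
and treat the four brackets separately. The first and fourth are finite-element errors that can be estimated in the same fashion as term $(I)$ in the proof of Theorem \ref{thm:convergencevariationalerrorestimate}: boundedness of the continuous and discrete states together with Theorem \ref{thm:overallerrorestimatenonlinear} applied to the $\Heinsnullzwei$-functions $\hat{l}_\sigma$ and $\ovl$ yields contributions of order $\tau+h^s$. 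Here one uses that $\Vert\hat{l}_\sigma\Vert_{\Heinsnullzwei}$ is uniformly bounded (by property (i) and \eqref{eq:convergencedgcghilfsresultat2}) so that the constants in Lemma \ref{lem:boundednessstatescontinuous} stay uniform. The third bracket is nonpositive because $\Pi\ovl\in B_\rho^I(\ovl)\cap\Vth$ is admissible for $(P_\sigma^\rho)$ and $\ovls$ is its global minimizer, while the fourth bracket's control-norm part $\tfrac{\alpha_l}{2}\sum\tau_m^{-1}\Vert[\Pi\ovl]_{m-1}\Vert^2-\tfrac{\alpha_l}{2}\Vert\partial_t\ovl\Vert_{\ito}^2$ is nonpositive, as already noted in the proof of Theorem \ref{thm:convergencecgcg} via Cauchy--Schwarz on $[\Pi\ovl]_{m-1}=\ovl(t_m)-\ovl(t_{m-1})=\int_{I_m}\partial_t\ovl$.

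The second bracket $\jth(\hat{l}_\sigma)-\jth(\ovls)$ is where $\hat{l}_\sigma$ has to be chosen carefully. Because property (i) gives $\Vert\hat{l}_\sigma\Vert_{\Heinsnullzwei}^2=\sum\tau_m^{-1}\Vert[\ovls]_{m-1}\Vert^2=\Vert\ovls\Vert_{L_\sigma}^2$, the regularization contributions cancel exactly and only the tracking part survives:
\[
\jth(\hat{l}_\sigma)-\jth(\ovls)=\tfrac{1}{2}\bigl(\Vert\Sth(\hat{l}_\sigma)-y_d\Vert_{(\ito)^2}^2-\Vert\Sth(\ovls)-y_d\Vert_{(\ito)^2}^2\bigr).
\]
Bounding this by a binomial identity as in Theorem \ref{thm:convergencevariationalerrorestimate} and invoking Lipschitz continuity of $\Sth$ on $\LzweiHminuseins$ together with the estimate $\Vert\hat{l}_\sigma-\ovls\Vert_{\ito}\leq C\tau$ from \eqref{eq:convergencedgdgHilfsresultat4} produces a contribution of order $\tau$.

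Putting everything together yields $j(\hat{l}_\sigma)-j(\ovl)\leq C(\tau+h^s)$, hence $\Vert\hat{l}_\sigma-\ovl\Vert_{\ito}\leq C(\tau^{1/2}+h^{s/2})$, and the triangle inequality finishes the proof. The main obstacle, compared with the variational case of Theorem \ref{thm:convergencevariationalerrorestimate}, is precisely the mismatch between the norms $\Vert\cdot\Vert_{L_\sigma}$ and $\Vert\cdot\Vert_{\Heinsnullzwei}$; the construction of $\hat{l}_\sigma$ with the matching identity in property (i) is designed to neutralize this issue, reducing the control-norm contributions to zero or to a sign we can exploit.
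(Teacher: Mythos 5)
Your proposal is correct and follows essentially the same route as the paper: apply the quadratic growth condition to the auxiliary function $\hat{l}_\sigma\in\Heinsnullzwei$ from the proof of Theorem \ref{thm:convergencecgcg}, then conclude via the triangle inequality and the bound $\Vert\ovls-\hat{l}_\sigma\Vert_{\ito}\leq C\tau$ from \eqref{eq:convergencedgdgHilfsresultat4}. Your four-term decomposition of $j(\hat{l}_\sigma)-j(\ovl)$ --- with the exact cancellation of the regularization terms via property (i), the sign of $\jth(\ovls)-\jth(\Pi\ovl)$ from minimality, and the Cauchy--Schwarz argument for the jump sums of $\Pi\ovl$ --- is a careful spelling-out of what the paper compresses into the phrase ``in the same way as shown before.''
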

\begin{proof}
A direct application of the strategy used in detail in the proof of Theorem \ref{thm:convergencevariationalerrorestimate} for $\ovls-\ovl$ is not possible as Theorem \ref{thm:overallerrorestimatenonlinear} is not applicable to the error $\Vert \Sth(\ovls)- S(\ovls)\Vert_{(\ito)^2}$ due to low temporal regularity of $\ovls$. Thus, instead, we consider $\hat{l}_\sigma\in\Heinsnullzwei$ as defined in the last proof. We have proved that $\hat{l}_\sigma\to\ovl$ in $\Heinsnullzwei$. Therefore, for $\hat{l}_\sigma$ the quadratic growth condition is applicable and yields 
\[\Vert\hat{l}_\sigma-\ovl\Vert_{\Lzweizwei}\leq C(\tau^\frac{1}{2}+h^\frac{s}{2})\]
in the same way as shown before. Then, due to \eqref{eq:convergencedgdgHilfsresultat4} we conclude
\[\Vert \ovls-\ovl\Vert_{\ito}\leq \Vert \ovls-\hat{l}_\sigma\Vert_{\ito}+\Vert\hat{l}_\sigma-\ovl\Vert_{\Lzweizwei} \leq C\tau+C(\tau^\frac{1}{2}+h^\frac{s}{2}).\]
\end{proof}

\section{Numerical examples}
\subsection{Simulation}
For the simulation, after discretization we employ a fixed point argument to solve the discrete nonlinear system of equations. This implies, that we have to choose $\tau$ small enough based on our findings about the existence of solutions. 

We will illustrate the discretization error estimates in two steps. First, we refine the temporal discretization parameter $\tau$ while the spatial discretization parameter $h$ will be fixed. In a second experiment, the roles will be switched. For simplicity, we employ equidistant meshes in both space and time. We use the abbreviations $e^\varphi_{\tau h}:= \varphi-\vth$ and $e^d_{\tau h}:=d-\dth$.\\

In the first example, the biactive set is of zero measure in each time point and moving in time. Therefore, the local damage $d$ has kinks in space which are moving in time. We consider the one-dimensional domain $\Omega=(0,1)$ for this example and set $T=1$. Consider
\[\varphi_1(t,x)=\sin(3\pi x)t,\]
\[d_1(t,x)=\begin{cases} d_0(x) &, t\leq t_a(x) \text{ or } \varphi_1(t,x)\leq 0, \\ \sin(3\pi x)t-\frac{r}{\beta}\\
-\frac{\delta}{\beta}\sin(3\pi x)[1-\exp(\frac{\beta}{\delta}(t_a(x)-t))] &, t\geq t_a(x) \text{ and } \varphi_1(t,x)>0 \end{cases}\]
with $\alpha=1$, $\beta=50$, $\delta=0.1$, $r=0.25\beta$ and $d_0(x)=0$.
For every point in space $x\in\Omega$, $t_a(x)$ is the point in time at which $x$ becomes active. It is given as
\[t_a(x)=\frac{r}{\beta\sin(3\pi x)}.\]
The load is given as
\[l_1(t,x)=(9\alpha\pi^2+\beta)\varphi_1(t,x)-\beta d_1(t,x).\]

Table \ref{tab:errorstatefirst} depicts the simulation results. We observe that the error in $\varphi$ converges faster than predicted by the theory. The rate for the error in $d$ is clearly smaller than the rate for $\varphi$, although, with $1.69$ on average, it is mildly larger than the predicted rate of $1.5$.
{\small
\begin{table}[h] \centering
\begin{tabular}{|c|c c|c c||c|c c|c c|}
\hline
$h$&\multicolumn{4}{c||}{$2^{-9}$}&$\tau$&\multicolumn{4}{c|}{$2^{-9}$} \\
\hline
$\tau$& $\Vert e^\varphi_{\tau h}\Vert_{\ito}$ & EOC & $\Vert e^d_{\tau h}\Vert_{\ito}$ & EOC & $h$ & $\Vert e^\varphi_{\tau h}\Vert_{\ito}$ & EOC & $\Vert e^d_{\tau h}\Vert_{\ito}$ & EOC \\
\hline
$2^{-9}$ & 7.61e-04 & -    & 5.11e-04 & - & $2^{-3}$ & 9.53e-02 & -    & 8.62e-02 & - \\
$2^{-10}$ & 3.62e-04 & 1.06 & 2.52e-04 & 1.02 & $2^{-4}$ &2.73e-02 & 1.80 & 2.81e-02 &1.61\\
$2^{-11}$ & 1.64e-04 & 1.14 & 1.31e-04 & 0.93 &$2^{-5}$ & 6.96e-03 & 1.97 & 8.02e-03 & 1.81\\
$2^{-12}$ & 6.74e-05 & 1.28 & 8.57e-05 & 0.62 &$2^{-6}$ & 1.70e-03 & 2.03 & 2.35e-03 &1.77\\
$2^{-13}$ &2.80e-05 & 1.26 & 7.88e-05 & 0.12 &$2^{-7}$ & 4.30e-04 &1.98 & 7.46e-04 &1.65\\
 &  &  &  &  &$2^{-8}$ & 1.08e-04 & 1.98 &2.41e-04 &1.62\\
\hline
\end{tabular}
\caption{1st example: Errors for the states $\varphi_1, d_1$}
\label{tab:errorstatefirst}
\end{table}}

\begin{figure} [h] \centering
\includegraphics[width=\textwidth]{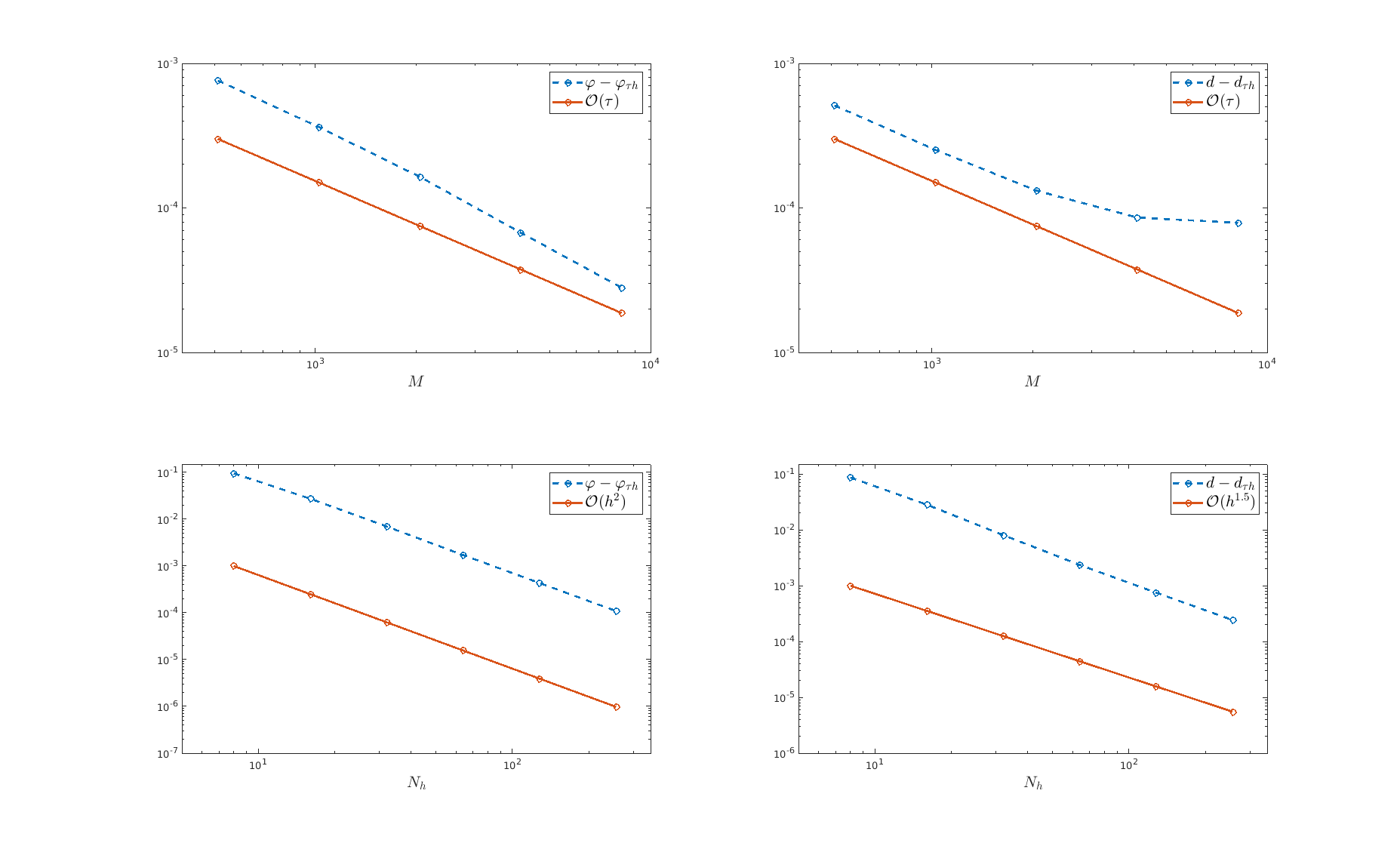} \caption{1st example: left: $\varphi_1$, right: $d_1$, top: temporal error, bottom: spatial error } 
\end{figure} 


Next, we have a look at a second example. This time, we set $\alpha=1,\beta=1, \delta=0.1, r=0.25\beta$ and choose
\[\varphi_2(t,x)=\begin{cases}9\frac{r}{\beta}(-27x^4+30x^3-12x^2+2x) &,\text{ if } x\in\left[0,\frac{1}{3}\right] \\ \frac{r}{\beta} &,\text{ if } x\in\left(\frac{1}{3},\frac{2}{3}\right)\\ 9\frac{r}{\beta}(-27x^4+78x^3-84x^2+40x-7) &,\text{ if } x\in \left[\frac{2}{3},1\right] \end{cases}\]
and 
\[d_2(t,x,y)=\begin{cases} 0 &, \varphi_2(t,x,y)\leq \frac{r}{\beta}, \\ (\varphi_2(t,x,y)-\frac{r}{\beta})(1-\exp(-\frac{\beta}{\delta}t)) &, \varphi_2(t,x,y)\geq \frac{r}{\beta}. \end{cases}\]
The corresponding load is given as 
\[l_2(t,x)=-\alpha\varphi^{\prime\prime}_2(t,x)+\beta\varphi_2(t,x)-\beta d_2(t,x).\]
The special feature of this example is, that there is a set of biactive points of positive measure present for all $t\in[0,1]$ since $-\beta(d_2-\varphi_2)-r=0$ for $x\in \left(\frac{1}{3},\frac{2}{3}\right)$. \\
{\small
\begin{table}[h] \centering
\begin{tabular}{|c|c c|c c||c|c c|c c|}
\hline
$h$&\multicolumn{4}{c||}{$2^{-11}$}&$\tau$&\multicolumn{4}{c|}{$2^{-9}$} \\
\hline
$\tau$& $\Vert e^\varphi_{\tau h}\Vert_{\ito}$ & EOC & $\Vert e^d_{\tau h}\Vert_{\ito}$ & EOC & $h$ & $\Vert e^\varphi_{\tau h}\Vert_{\ito}$ & EOC & $\Vert e^d_{\tau h}\Vert_{\ito}$ & EOC \\
\hline
$2^{-5}$ & 5.18e-06 & -    &  1.78e-04& - & $2^{-3}$ & 5.72e-02 & -    & 5.19e-02 & - \\
$2^{-6}$ & 2.55e-06 & 1.02 & 9.24e-05 & 0.94& $2^{-4}$ & 1.26e-02 & 2.17 & 1.11e-02 & 2.21\\
$2^{-7}$ & 1.20e-06 & 1.08 &  4.70e-05& 0.97 &$2^{-5}$ & 3.39e-03 & 1.90 & 3.01e-03 & 1.89\\
$2^{-8}$ & 7.38e-07 & 0.71 & 2.28e-05 & 1.04&$2^{-6}$ & 8.23e-04 & 2.04 & 7.09e-04 & 2.08 \\
$2^{-9}$ & 6.79e-07 & 0.12 & 1.05e-05 & 1.11&$2^{-7}$ & 2.08e-04 & 1.98 & 1.84e-04 & 1.94  \\
$2^{-10}$ & 7.26e-07 & -   & 4.60e-06 & 1.20& $2^{-8}$ & 5.16e-05 & 2.01 & 4.94e-05 & 1.90\\

\hline
\end{tabular}
\caption{2nd example: Errors for the states $\varphi_2, d_2$}
\label{tab:temporalerrorthirdexample}
\end{table}}


We require a very fine grid in space to ensure that the temporal error is dominant. Moreover, we observe second order convergence in space for both states. Both examples indicate that the proven rate of convergence for the error $\vt-\vth$ might not be optimal. If an error estimate of higher order for $\dt-\dth$ in $\LzweiHminuseins$ was available one could improve the results for $\vt-\vth$ in $\Lzweizwei$.
\begin{center}
\begin{figure} [h]
{\includegraphics[width=\textwidth]{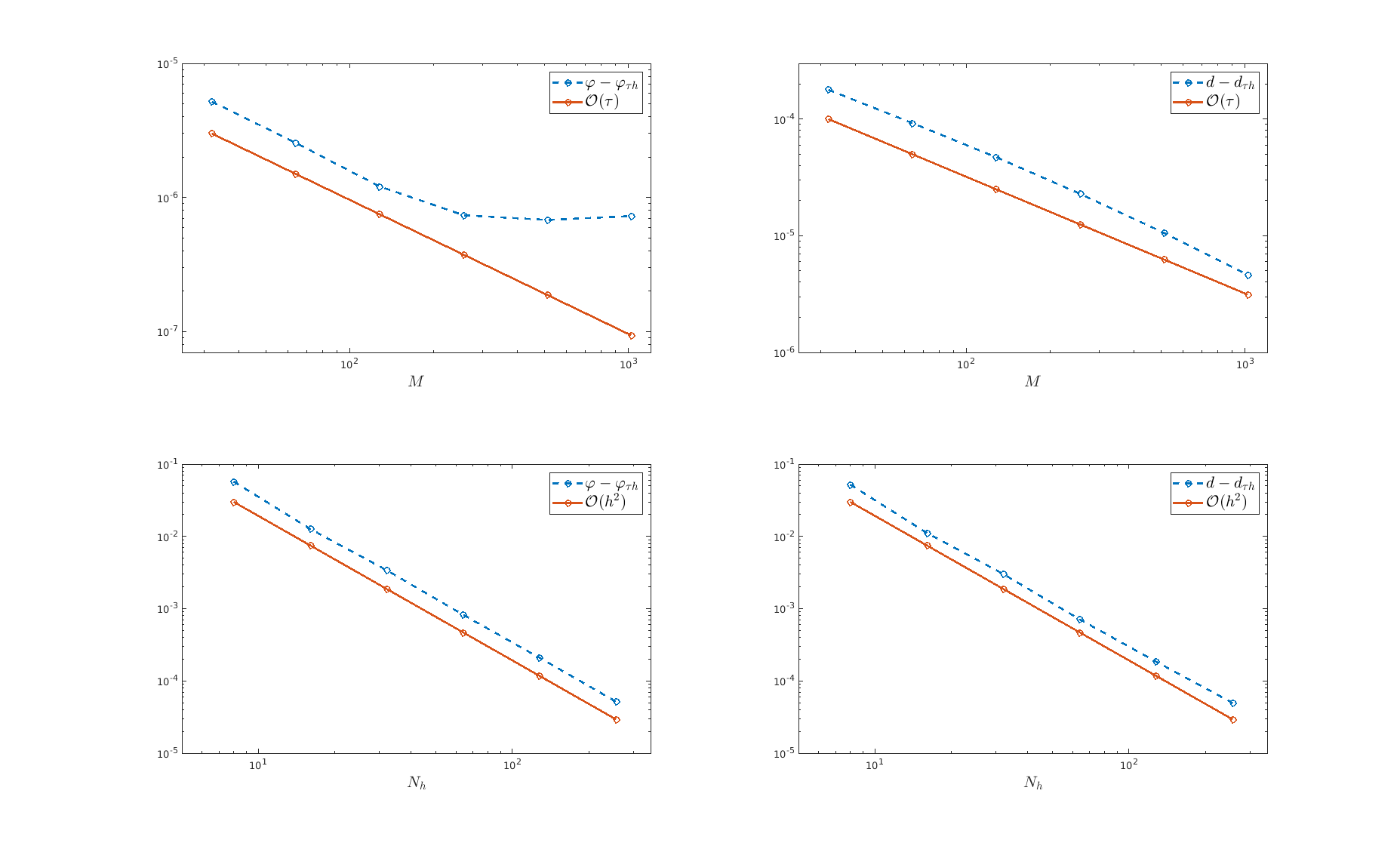} \caption{2nd example: left: $\varphi_2$, right: $d_2$, top: temporal error, bottom: spatial error }} 

\end{figure} 
\end{center}

 
\subsection{Optimization}
We will illustrate the experimental order of convergence for the $dG(0)cG(1)$-discretization of the control. Rather than solving the non-smooth optimal control problem with a suitable non-smooth algorithm, we employ a simple gradient's descent method combined with Armijo's line search for a regularized (and thus differentiable) optimal control problem $(P_\varepsilon)$. For the regularization of the $\max$-function, we make use of \eqref{ex:maxe} and choose $\varepsilon=10^{-9}$. We set $\alpha_l=10$ in both examples.\\
For the optimal control of the first example, we choose as objective
\[J(\varphi,d,l)=\frac{1}{2}\Vert \varphi-\varphi_1\Vert_{\ito}^2+\frac{1}{2}\Vert d-d_1\Vert_{\ito}^2+\frac{\alpha_l}{2}\Vert l-l_1\Vert_{\Heinsnullzwei}^2.\]
For the optimal control of the second example, we choose the full norm in the objective
\[J(\varphi,d,l)=\frac{1}{2}\Vert \varphi-\varphi_2\Vert_{\ito}^2+\frac{1}{2}\Vert d-d_2\Vert_{\ito}^2+\frac{\alpha_l}{2}\Vert l-l_2\Vert_{\Heinszwei}^2\]
since $l_2(0,\cdot)\not =0$. All results from the previous section are also fulfilled for the full norm. The optimal solution is given as $\ovl=l_i$ with corresponding states $(\ovv,\ovd)=(\varphi_i,d_i)=S(l_i)$ and objective value $j(\ovl)=0$ in both examples $i=1,2$. The optimality conditions of Theorem \ref{thm:optimalitysystemforP} are fulfilled with $(z,p)=(0,0)$ and $\mu=0$. Thus, the adjoint states and the multiplier exhibit high spatial regularity. Assumption \ref{ass:quadgrowthcond} is fulfilled due to Poincar\'{e}'s inequality in abstract function spaces. We abbreviate $e^{\ovl}_\sigma:= \ovl-\ovls$. Table \ref{tab:optimalcontrol} depicts the experimental order of convergence for the controls for different temporal and spatial meshes. 



{\small
 \begin{table}[h] \centering
\begin{tabular}{|c|c c|c|c c||c|c c|c|c c|}
\cline{1-6} \cline{7-12}
$h$&\multicolumn{2}{c|}{$2^{-9}$} & $\tau$ & \multicolumn{2}{c||}{$2^{-9}$} & $h$&\multicolumn{2}{c|}{$2^{-13}$} & $\tau$ & \multicolumn{2}{c|}{$2^{-9}$}  \\
\cline{1-6} \cline{7-12}
$\tau$& $\Vert e^{\ovl}_\sigma\Vert_{I\times\Omega}$ & EOC & $h$& $\Vert e^{\ovl}_\sigma\Vert_{I\times\Omega}$ & EOC & $\tau$& $\Vert e^{\ovl}_\sigma\Vert_{I\times\Omega}$ & EOC & $h$& $\Vert e^{\ovl}_\sigma\Vert_{I\times\Omega}$ & EOC  \\
\cline{1-6} \cline{7-12}
$2^{-7}$ & \multicolumn{2}{c|}{fixed point it.}      &$2^{-3}$ &6.89e-00 & - &  $2^{-5}$ &2.37e-04  & -     &$2^{-3}$ &1.50e-00 & -    \\
$2^{-8}$&  \multicolumn{2}{c|}{not conv.}    &$2^{-4}$ & 1.87e-00 & 1.87 &$2^{-6}$ &1.21e-04  &0.96   &$2^{-4}$ & 4.59e-01 & 1.70   \\
$2^{-9}$ &8.92e-02  &-   &$2^{-5}$& 4.78e-01 & 1.97 &  $2^{-7}$ &6.29e-05  &0.95   &$2^{-5}$& 1.24e-01 & 1.88 \\
$2^{-10}$ &4.21e-02  &1.08   &$2^{-6}$&1.43e-01 & 1.73 &$2^{-8}$ &3.59e-05  &0.80   &$2^{-6}$&4.06e-02 & 1.61  \\
$2^{-11}$ &1.87e-02  &1.16   &$2^{-7}$& 4.35e-02 & 1.72 & $2^{-9}$ &2.57e-05  &0.48   &$2^{-7}$& 1.27e-02 & 1.67 \\ 
$2^{-12}$&7.88e-03  &1.25   &$2^{-8}$& 1.39e-02 & 1.64 & $2^{-10}$&2.30e-05  &0.16   &$2^{-8}$& 4.36e-03 & 1.54 \\ 
\cline{1-6} \cline{7-12}
\end{tabular}
\caption{Errors for the controls, left: $l_1$, right: $l_2$}
\label{tab:optimalcontrol}
\end{table}}

\begin{center}
\begin{figure} [h]
{\includegraphics[width=\textwidth]{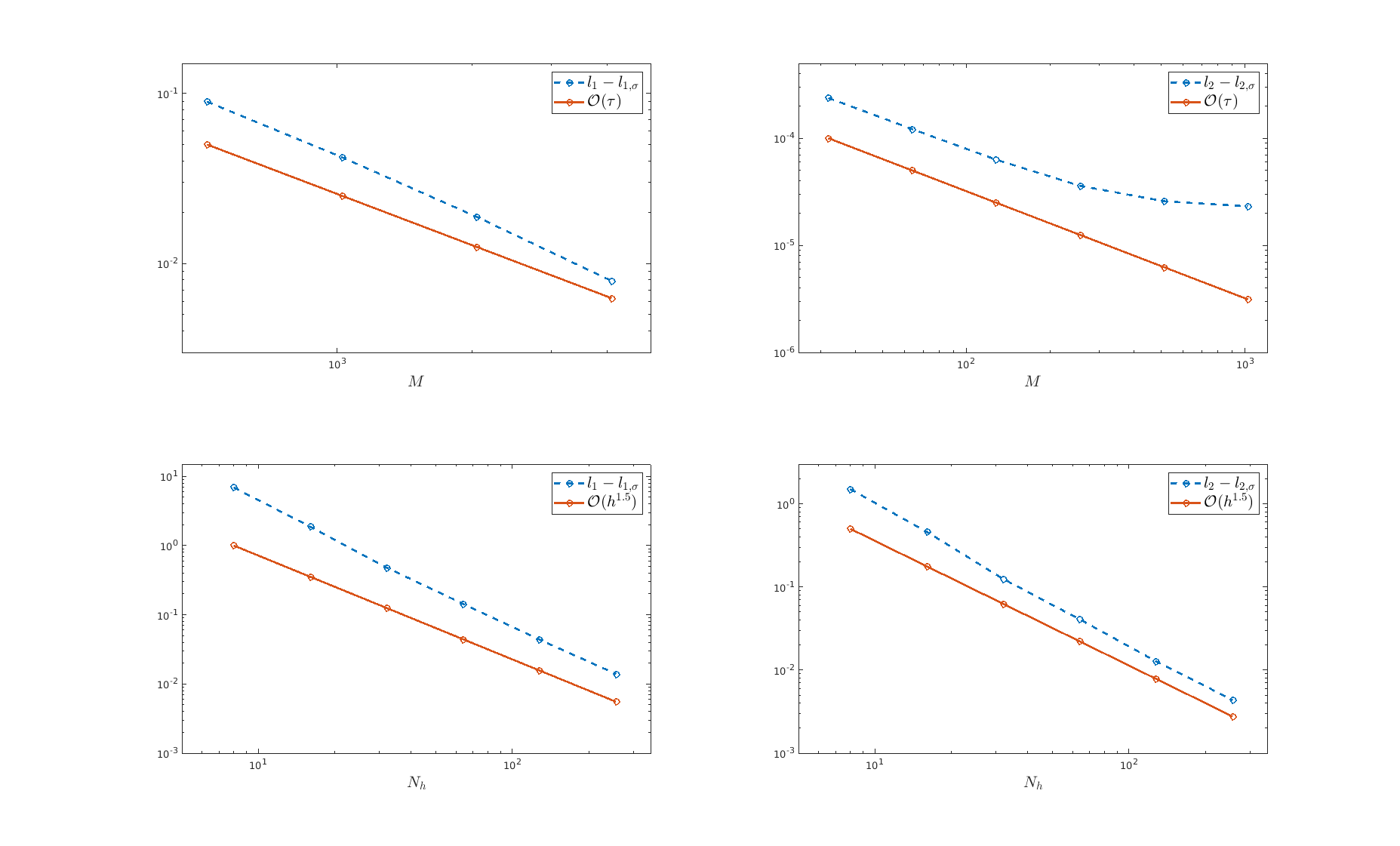} \caption{Errors for the optimal controls, left: $l_1$, right: $l_2$, top: temporal error, bottom: spatial error }} 

\end{figure} 
\end{center}
\vspace*{-0.5cm}
Both examples illustrate that the experimental rates for the control, both in time and space, are better than predicted by the theory. Whereas in time, we encounter the full convergence rate of $1$, the rates in space are close to $1.5$. These rates are similar to results for a $dG(0)cG(1)$ discretization of smooth problems, see for example \cite{NV11}.
Thus, a more detailed error analysis, based on  second order sufficient conditions for non-smooth problems,  might be promising. 

\bibliographystyle{abbrv}
\bibliography{Literatur}
\end{document}